\theoremstyle{plain}
\newtheorem{lemma}{Lemma}[section] 
\newtheorem{theorem}[lemma]{Theorem}
\newtheorem{corollary}[lemma]{Corollary}
\newtheorem{proposition}[lemma]{Proposition}
\theoremstyle{definition}
\newtheorem{remark}[lemma]{Remark}
\newtheorem{example}[lemma]{Example}
\newtheorem{definition}[lemma]{Definition}
\newtheorem{question}[lemma]{Question}
\newcommand{\Zset}{\mathbb Z}
\newcommand{\Cset}{\mathbb C}
\newcommand{\M}{\operatorname{\mathbb M}}
\newcommand{\gr}{\operatorname{gr}}
\newcommand{\ol}{\overline}
\newcommand{\End}{\operatorname{End}}
\newcommand{\Hom}{\operatorname{Hom}}
\newcommand{\A}{\mathcal A}
\newcommand{\B}{\mathcal B}
\newcommand{\V}{\mathcal V}
\newcommand{\Stab}{\operatorname{Stab}}
\newcommand{\POG}{\mathbf {POG}}
\newcommand{\OG}{\mathbf {OG}}
\newcommand{\GrURings}{\mathbf {GrURings}}
\newcommand{\GrRings}{\mathbf {GrRings}}
\newcommand{\GrMat}{\mathbf {GrMat}}
\title[Simplicial and dimension groups with group action]{Simplicial and dimension groups with group action and their realization}
\author{Lia Va\v s}
\address{Department of Mathematics, Physics and Statistics, University of the Sciences, Philadelphia, PA 19104, USA}
\email{l.vas@usciences.edu}
\subjclass[2000]{19A49, 06F20, 16W50, 16E20, 19K14} 
\keywords{Dimension group, simplicial group, group action, Grothendieck group, ordered abelian group, realization, graded ring}
\thanks{The author is very grateful to the referee for a careful read, a thoughtful report, and for making some highly valuable suggestions.}
\begin{document}

\begin{abstract} 
We define simplicial and dimension $\Gamma$-groups, the generalizations of simplicial and dimension groups to the case when these groups have an action of an arbitrary group $\Gamma.$ Assuming that the integral group ring of $\Gamma$ is Noetherian, we show that every dimension $\Gamma$-group is isomorphic to a direct limit of a directed system of simplicial $\Gamma$-groups and that the limit can be taken in the category of ordered groups with order-units or generating intervals.  

We adapt Hazrat's definition of the Grothendieck $\Gamma$-group $K_0^{\Gamma}(R)$ for a $\Gamma$-graded ring $R$ to the case when $\Gamma$ is not necessarily abelian. If $G$ is a pre-ordered abelian group with an action of $\Gamma$ which agrees with the pre-ordered structure, we say that $G$ is {\em realized} by a $\Gamma$-graded ring $R$ if $K_0^{\Gamma}(R)$ and $G$ are isomorphic as pre-ordered $\Gamma$-groups with an isomorphism which preserves order-units or generating intervals.  

We show that every simplicial $\Gamma$-group with an order-unit can be realized by a graded matricial ring over a $\Gamma$-graded division ring. If the integral group ring of $\Gamma$ is Noetherian, we realize a countable dimension $\Gamma$-group with an order-unit or a generating interval by a $\Gamma$-graded ultramatricial ring over a $\Gamma$-graded division ring. We also relate our results to graded rings with involution which give rise to Grothendieck $\Gamma$-groups with actions of both $\Gamma$ and $\mathbb Z_2$. We adapt the Realization Problem for von Neumann regular rings to graded rings and concepts from this work and discuss some other questions.
\end{abstract}

\maketitle

\section{Introduction}

A pre-ordered abelian group $G$ is said to be {\em realized} by a ring $R$ if the Grothendieck group $K_0(R)$ and $G$ are isomorphic. If $G$ is a countable {\em dimension group}, i.e. $G$ is a countable abelian group which is partially ordered, directed, unperforated, and has the interpolation property, then $G$ can be realized by an ultramatricial algebra $R$ over a given field. Moreover, if $G$ is considered together with an order-unit or a generating interval, the isomorphism $K_0(R)\cong G$ can be chosen to preserve order-units or generating intervals. A fundamental result on dimension groups states that every dimension group is a direct limit of {\em simplicial groups}, direct sums of finitely many copies of the infinite cyclic group $\Zset,$ and those groups can be realized by matricial algebras over a field.   

The main goal of this paper is to generalize these classic results to the case when simplicial and dimension groups have an additional action of a group $\Gamma$. Such additional action is present for the Grothendieck groups of $\Gamma$-graded rings for example. The classic results correspond to the case when $\Gamma$ is trivial. 

If a ring $R$ is graded by a group $\Gamma,$ the Grothendieck $\Gamma$-group $K_0^{\Gamma}(R)$ is constructed analogously to the ordinary Grothendieck group $K_0(R),$
by considering finitely generated {\em graded} projective modules instead of finitely generated projective modules.
The group $K_0^{\Gamma}(R)$ carries more information than $K_0(R)$ and distinguishes some rings which have identical $K_0$-groups as illustrated by examples in \cite{Roozbeh_Annalen} and \cite{Roozbeh_Lia}. The Grothendieck $\Gamma$-group was introduced in \cite{Roozbeh_Annalen} (now also in \cite{Roozbeh_book}) for the case when $\Gamma$ is abelian. In section \ref{section_Gamma_groups}, we adapt the definitions and results from  \cite{Roozbeh_book} and \cite{Roozbeh_Lia} to the general case when $\Gamma$ is not necessarily abelian.

The $\Gamma$-grading of $R$ induces a $\Gamma$-action on $K_0^{\Gamma}(R)$ which agrees with the pre-order. We say that  $K_0^{\Gamma}(R)$ is a pre-ordered $\Gamma$-group. If $G$ is a pre-ordered $\Gamma$-group, we say that $G$ is {\em realized} by a $\Gamma$-graded ring $R$ if $K_0^{\Gamma}(R)$ and $G$ are isomorphic as pre-ordered $\Gamma$-groups. If $G$ is considered with an order-unit or a generating interval, the isomorphism is required to preserve them.  

We define ordered $\Gamma$-group generalizations of simplicial, ultrasimplicial, and dimension groups called {\em simplicial $\Gamma$-groups}, {\em ultrasimplicial $\Gamma$-groups} and {\em dimension $\Gamma$-groups} respectively. 

In section \ref{section_simplicial_group}, we define a simplicial $\Gamma$-group. The existence of a $\Zset$-module basis of a nontrivial simplicial group is reflected in our definition of a nontrivial simplicial $\Gamma$-group $G$ as follows: there are a positive integer $n$ and elements $x_1,\ldots, x_n,$ 
fixed by the same subgroup $\Delta$ of $\Gamma,$ which generate $G$ as a $\Zset[\Gamma]$-module and such that a $\Zset[\Gamma]$-linear combination of $x_1,\ldots,x_n$ is zero exactly when the coefficients map to zero under the natural $\Zset[\Gamma]$-module map $\Zset[\Gamma]\to\Zset[\Gamma/\Delta]$ (Definitions \ref{simplicial_cone_definition} and \ref{simplicial_definition}). The set $\{x_1,\ldots, x_n\}$ is called {\em a simplicial $\Gamma$-basis} and $\Delta$ is its stabilizer. If $\Delta$ is a normal subgroup, $G$ is a free $\Zset[\Gamma/\Delta]$-module and the elements $x_1,\ldots, x_n$ correspond to a $\Zset[\Gamma/\Delta]$-basis of $G$ (Proposition \ref{Delta_normal_case}). The main result of section \ref{section_simplicial_group} is Theorem \ref{realization_simplicial} stating that every simplicial $\Gamma$-group can be realized by a graded matricial ring over a $\Gamma$-graded division ring. In this case, the basis stabilizer $\Delta$ corresponds to the graded support of the graded division ring.  

In section \ref{section_limits}, we show that appropriate categories of ordered $\Gamma$-groups are closed under formation of direct limits (Proposition \ref{direct_limits}) which enables us to introduce an ultrasimplicial $\Gamma$-group as a direct limit of a sequence of simplicial $\Gamma$-groups with simplicial $\Gamma$-bases 
stabilized by the same subgroup of $\Gamma.$ The functor $K_0^\Gamma$ agrees with direct limits (Proposition \ref{direct_limits_K0}) and, as a corollary, the Grothendieck $\Gamma$-group of a graded ultramatricial ring over a graded division ring is an ultrasimplicial $\Gamma$-group (Corollary \ref{dir_lim_corollary}). Showing the converse, that every ultrasimplicial $\Gamma$-group can be obtained in this way, requires more work and we set to achieve this in the rest of section \ref{section_limits}. First, we show that the functor $K_0^\Gamma$ is full when restricted to the category of graded matricial rings over a graded division ring (Proposition \ref{fullness}). Then we prove the main result of section \ref{section_limits}, Theorem \ref{realization_ultrasimplicial}, stating that every ultrasimplicial $\Gamma$-group can be realized by a graded ultramatricial ring over a graded division ring. 

In section \ref{section_dimension_group}, we define a dimension $\Gamma$-group. In the case when $\Gamma$ is trivial, a dimension group $G$ is defined as a partially ordered, directed, unperforated abelian group with interpolation. If $G_1$ is a simplicial group and $g_1: G_1\to G$ an order-preserving homomorphism, then there are a simplicial group $G_2$ and order-preserving homomorphisms $g_{12}: G_1\to G_2$ and $g_2: G_2\to G$ such that $g_2g_{12}=g_1$ and $\ker g_1=\ker g_{12}.$ This statement, sometimes referred to as the {\em Shen criterion}, enables one to inductively construct a directed system of simplicial groups whose direct limit is any given dimension group $G.$ The Shen criterion follows from the {\em Strong Decomposition Property} (SDP). If  $\Delta$ is a subgroup of $\Gamma$, we formulate a generalization (SDP$_\Delta$) of the (SDP) for ordered and directed $\Gamma$-groups (Definition \ref{SDP_Delta_definition}). The presence of the subgroup $\Delta$ in this definition can be explained by the fact that every simplicial $\Gamma$-group comes equipped with a subgroup $\Delta$ of $\Gamma$ which stabilizes its simplicial $\Gamma$-basis.  

We define a dimension $\Gamma$-group as a directed and ordered $\Gamma$-group which satisfies (SDP$_\Delta$) for some subgroup $\Delta$ of $\Gamma$ which is contained in the stabilizer of the group (Definition \ref{dimension_definition}). If $\Gamma$ is trivial, this definition is equivalent with the usual definition. 
With the property of being unperforated appropriately generalized, we show that a dimension $\Gamma$-group is unperforated with respect to a subgroup of $\Gamma$ and  that it has the interpolation property (Propositions \ref{unperforated} and \ref{interpolation}). In Question \ref{question_converse}, we ask whether the converse holds, like in the trivial case. 

An ultrasimplicial $\Gamma$-group $G$ satisfies (SDP$_\Delta$) for some subgroup $\Delta$ of $\Gamma.$ However, any such $\Delta$ may not necessarily be contained in the stabilizer of $G$. If $G$ can be formed via simplicial $\Gamma$-groups with a {\em normal} stabilizer, then $G$ is a dimension $\Gamma$-group (Proposition \ref{ultrasimplicial_is_dimension}). In particular, if $\Gamma$ is abelian, every  ultrasimplicial $\Gamma$-group is a countable dimension $\Gamma$-group. 
We show the converse in the case when $\Zset[\Gamma]$ is Noetherian so, in that case, the statement that ``simplicial groups are building blocks of every dimension group'' still holds. More specifically, with this assumption, Theorem \ref{any_dim_gr} states that every dimension $\Gamma$-group is a direct limit of a directed system of simplicial $\Gamma$-groups with simplicial $\Gamma$-bases stabilized by a normal subgroup of $\Gamma$. Moreover, the limit can be taken in the category of ordered groups with order-units or generating intervals. The proof of Theorem \ref{any_dim_gr} uses our generalized Shen criterion, Proposition \ref{telescoping}, and the proof of Proposition \ref{telescoping} uses the assumption that $\Zset[\Gamma]$ is a Noetherian ring. In Question \ref{question_Noetherian}, we ask whether Theorem \ref{any_dim_gr} holds without this assumption. 

Returning to the question of realizability, we show Theorem \ref{realization_dim_gr} stating that every countable dimension $\Gamma$-group with an order-unit or a generating interval can be realized by a $\Gamma$-graded ultramatricial ring over a $\Gamma$-graded division ring if $\Zset[\Gamma]$ is Noetherian. Thus, if $\Zset[\Gamma]$ is Noetherian, $G$ is a countable dimension $\Gamma$-group exactly when it is realizable by a graded ultramatricial ring over a graded division ring $K$ with normal support (Corollary \ref{realization_corollary}). If $\Gamma$ is a finitely generated abelian group, the class of ultrasimplicial $\Gamma$-groups and the class of countable dimension $\Gamma$-groups coincide (Corollary \ref{realization_corollary_abelian}). 

We also formulate our results for graded rings with involution and for $\Gamma$-$\Zset_2$-bigroups. Thus, our results are applicable to group rings, Leavitt path algebras, and ultramatricial algebras over $\Gamma$-graded fields.   

In section \ref{section_open_problems}, we list some remarks and discuss open questions (including Questions \ref{question_Noetherian} and \ref{question_converse}). We state the {\em Realization Problem} for von Neumann regular rings in terms of graded von Neumann regular rings and we refer to this problem as the {\em $\Gamma$-Realization Problem}. We also note one context in which a pre-ordered $\Gamma$-group structure can arise on the standard $K_0$-group of a ring not the Grothendieck $\Gamma$-group of a graded ring.     

In appendix \ref{appendix_ultramatricial}, we relax the assumption of \cite[Theorem 5.2.4]{Roozbeh_book} that the grade group is abelian and show Theorem \ref{classification} stating that $K_0^\Gamma$ classifies graded ultramatricial algebras over a field graded by an arbitrary group $\Gamma.$  The connecting maps in \cite[Theorem 5.2.4]{Roozbeh_book} are assumed to be unit-preserving while we do not make this assumption. 

In appendix \ref{appendix_extensions}, we show that an ultrasimplicial $\Gamma$-group has a dimension $\Gamma$-group extension by an appropriate one-dimensional simplicial $\Gamma$-group (Proposition \ref{extension_of_ultrasimplicial}) just like in the case when $\Gamma$ is trivial. If $\Zset[\Gamma]$ is Noetherian, the same statement holds for a dimension $\Gamma$-group (Theorem \ref{extension_of_dimension}).

\section{Pre-ordered \texorpdfstring{$\Gamma$}{TEXT}-groups and Grothendieck \texorpdfstring{$\Gamma$}{}-groups}\label{section_Gamma_groups}

Throughout the paper, $\Gamma$ denotes an arbitrary group unless otherwise stated. We use multiplicative notation for the operation of $\Gamma$ and $1_\Gamma,$ or $1$ if it is clear from context, for the identity element. After a short preliminary on $\Gamma$-graded rings, in this section we adapt the theory of Grothendieck $\Gamma$-groups, defined in \cite{Roozbeh_Annalen} in the case when $\Gamma$ is abelian, to the general case when $\Gamma$ is arbitrary. 
We then consider the categories of pre-ordered $\Gamma$-groups and  pre-ordered $\Gamma$-$\Zset_2$-bigroups and the structure of the Grothendieck $\Gamma$-groups of graded rings and graded $*$-rings.

\subsection{Graded rings preliminaries}\label{subsection_preliminaries} 
A ring $R$ is a \emph{$\Gamma$-graded ring} if $R=\bigoplus_{ \gamma \in \Gamma} R_{\gamma}$ for additive subgroups $R_{\gamma}$ of $R$ such that $R_{\gamma}  R_{\delta} \subseteq R_{\gamma\delta}$ for all $\gamma, \delta \in \Gamma$. If it is clear from the context that $R$ is graded by $\Gamma$, $R$ is said to be a graded ring. The elements of $\bigcup_{\gamma \in \Gamma} R_{\gamma}$ are the \emph{homogeneous elements} of $R.$ 
A unital graded ring $R$ is a \emph{graded division ring} if every nonzero homogeneous element has a multiplicative inverse. If a graded division ring $R$ is commutative then $R$ is a {\em graded field}. We also recall that the {\em support} $\Gamma_R$ of a graded ring $R$ is defined by  
\[\Gamma_R=\{\gamma\in\Gamma\,|\, R_\gamma\neq 0\}.\]
A $\Gamma$-graded ring $R$ is \emph{trivially graded} if $\Gamma_R=\{1\}.$ 

We adopt the standard definitions of graded ring homomorphisms and isomorphisms, graded left and right $R$-modules, graded module homomorphisms, graded algebras, graded left and right ideals, graded left and right free and projective modules as defined in \cite{NvO_book} and \cite{Roozbeh_book}. Since we intend to adapt results proven in the case when $\Gamma$ is abelian to the case when $\Gamma$ is arbitrary, we review definitions of some concepts which are particularly left-right sensitive in a bit more details. 

If $M$ is a graded left $R$-module and $\gamma\in\Gamma,$ the $\gamma$-\emph{shifted or $\gamma$-suspended} graded left $R$-module $M(\gamma)$ is defined as the module $M$ with the $\Gamma$-grading given by $$M(\gamma)_\delta = M_{\delta\gamma}$$ for all $\delta\in \Gamma.$ The order of the terms in the product $\delta\gamma$ in this definition is such that $R_\varepsilon M(\gamma)_\delta\subseteq M(\gamma)_{\varepsilon\delta}$ for any $\delta,\varepsilon\in\Gamma$ so that $M(\gamma)$ is indeed a graded left $R$-module for any $\gamma\in\Gamma$. By definition, $M(\gamma)(\delta)=M(\delta\gamma)$ for any $\gamma,\delta\in\Gamma.$ 

Analogously, if $M$ is a graded right $R$-module, the $\gamma$-shifted graded right $R$-module, $(\gamma)M$ is defined as the module $M$ with the $\Gamma$-grading given by $$(\gamma)M_\delta = M_{\gamma\delta}$$ for all $\delta\in \Gamma.$ Thus,  $(\delta)(\gamma)M=(\gamma\delta)M$ for any $\gamma,\delta\in\Gamma.$ 

A graded left module of the form
$R(\gamma_1)\oplus\ldots\oplus R(\gamma_n)$ for $\gamma_1, \ldots,\gamma_n\in\Gamma$ is graded free. Conversely, any finitely generated graded free left $R$-module is of this form. Analogously, every finitely generated graded free right $R$-module is of the form $(\gamma_1)R\oplus\ldots\oplus (\gamma_n)R$ for $\gamma_1, \ldots,\gamma_n\in\Gamma$ (both \cite{NvO_book} and \cite{Roozbeh_book} contain details).  

If $M$ and $N$ are graded left $R$-modules and $\gamma\in\Gamma$, then $\Hom_R(M,N)_\gamma$ usually denotes the following. 
$$\Hom_R(M,N)_\gamma=\{f\in \Hom_R(M, N)\,|\,f(M_\delta)\subseteq N_{\delta\gamma}\}$$ 
If $M$ and $N$ are graded right $R$-modules and $\gamma\in\Gamma$, then $\Hom_R(M,N)_\gamma$ denotes the following. 
$$\Hom_R(M,N)_\gamma=\{f\in \Hom_R(M, N)\,|\,f(M_\delta)\subseteq N_{\gamma\delta}\}$$ 
In both cases, if $M$ is finitely generated (which is the case we will exclusively be interested in), then $\Hom_R(M,N)=\bigoplus_{\gamma\in\Gamma} \Hom_R(M,N)_\gamma$ (both \cite{NvO_book} and \cite{Roozbeh_book} contain details).  

For a $\Gamma$-graded unital ring $R$ and $\gamma_1,\dots,\gamma_n\in \Gamma$, $\M_n(R)(\gamma_1,\dots,\gamma_n),$ or $\M_n(R)(\ol \gamma)$ for short, denotes the ring of matrices $\M_n(R)$ with the $\Gamma$-grading given by  
\begin{center}
$(r_{ij})\in\M_n(R)(\gamma_1,\dots,\gamma_n)_\delta\;\;$ if $\;\;r_{ij}\in R_{\gamma_i\delta\gamma_j^{-1}}$
\end{center}
for $i,j=1,\ldots, n.$ This definition agrees with \cite{NvO_book}. We note that the definition of $\M_n(R)(\gamma_1,\dots,\gamma_n)$ in \cite[Section 1.3]{Roozbeh_book} is different. In \cite{Roozbeh_book}, $\M_n(R)(\gamma_1,\dots,\gamma_n)$ is the ring of matrices $\M_n(R)$ such that $(r_{ij})\in\M_n(R)(\gamma_1,\dots,\gamma_n)_\delta$ if $r_{ij}\in R_{\gamma_i^{-1}\delta\gamma_j}$ for $i,j=1,\ldots, n.$ The definitions are equivalent since
\begin{center}
$\M_n(R)(\gamma_1,\dots,\gamma_n)\;$ from \cite{Roozbeh_book} $\;\;$ is  $\;\;\M_n(R)(\gamma_1^{-1},\dots,\gamma_n^{-1})\;$ from \cite{NvO_book}. 
\end{center}

Using the first definition, 
\[\Hom_R(F,F)\cong_{\gr} \;\M_n(R)(\gamma_1,\dots,\gamma_n)\]
as graded rings where $F$ is the graded free right module $(\gamma_1)R\oplus \dots \oplus (\gamma_n)R.$ Using the second definition, the above formula still holds but for $F=(\gamma_1^{-1})R\oplus \dots \oplus (\gamma_n^{-1})R.$ 

We shall use the definition from \cite{NvO_book} in this paper. Our decision is based on the fact that this definition has been in circulation longer and, consequently, has been used in more publications than the definition from \cite{Roozbeh_book}.   

A \emph{graded matricial ring} over a $\Gamma$-graded ring $R$ is a finite direct sum of graded matrix rings of the form $\M_{n}(R)(\overline \gamma)$ for $\overline \gamma\in  \Gamma^{n}$. A \emph{graded ultramatricial ring} over $R$ is a ring graded isomorphic to a direct limit of a sequence of graded matricial rings over $R$.

It is direct to check that the graded homomorphisms between finitely generated graded free left and right $R$-modules below
\[\bigoplus_{i=1}^n R(\gamma_i^{-1})\to \bigoplus_{j=1}^m R(\delta_i^{-1})\hskip2cm \bigoplus_{j=1}^m (\delta_j)R\to \bigoplus_{i=1}^n (\gamma_i)R \]
are in bijective correspondence with the elements $(r_{ij})$ of $\M_{n\times m}(R)$ such that 
$r_{ij}\in R_{\gamma_i\delta_j^{-1}}$ (see \cite[Section 1.3.4]{Roozbeh_book}). The abelian group of such matrices is denoted by 
$\M_{n\times m}(R)[\gamma_1,\ldots,\gamma_n][\delta_1,\ldots,\delta_m].$

\subsection{Graded rings with involution} 
A ring involution $*$ is an anti-automorphism of order two. 
A ring $R$ with an involution $*$ is said to be an {\em involutive ring} or a {\em $*$-ring}.
In this case, the matrix ring  $\M_n(R)$ also becomes an involutive ring with $(r_{ij})^*=(r_{ji}^*)$ and we refer to this involution as the \emph{$*$-transpose.} 
If a $*$-ring $R$ is also a $K$-algebra for some commutative, involutive ring $K,$ then $R$ is a {\em
$*$-algebra} if $(kr)^*=k^*r^*$ for $k\in K$ and $r\in R.$

A $\Gamma$-graded ring $R$ with involution is said to be a \emph{graded $*$-ring} if $R_\gamma ^*\subseteq R_{\gamma^{-1}}$ for every $\gamma\in \Gamma.$ In this case, the $*$-transpose makes $\M_n(R)(\gamma_1,\dots,\gamma_n)$ into a graded $*$-ring. A graded ring homomorphism which is also a $*$-homomorphism ($f(r^*)=f(r)^*$) is said to be a {\em graded $*$-homomorphism}. 

A \emph{graded matricial $*$-ring} over a $\Gamma$-graded $*$-ring $R$ is graded matricial ring with the $*$-transpose in each coordinate as its involution.  
A \emph{graded ultramatricial $*$-ring} over $R$ is a graded ultramatricial ring which is graded $*$-isomorphic to a direct limit of a sequence of graded matricial $*$-rings over $R$.

\subsection{The Grothendieck \texorpdfstring{$\Gamma$}{TEXT}-group}\label{subsection_Grothendieck}

If $R$ is a $\Gamma$-graded unital ring, let $\V^{\Gamma}_l(R)$ denote the monoid of graded isomorphism classes $[P]$ of finitely generated graded projective left $R$-modules $P$ with $[P]+[Q]=[P\oplus Q]$ as the addition operation. The group $\Gamma$ acts on the monoid $\V^{\Gamma}_l(R)$ by \[(\gamma, [P])\mapsto [P(\gamma)]\] and the action agrees with the addition. We use the term {\em $\Gamma$-monoid} for a monoid with an action of $\Gamma$ which agrees with the monoid structure. Analogously, let 
$\V^{\Gamma}_r(R)$ denote the monoid of graded isomorphism classes $[Q]$ of finitely generated graded projective right $R$-modules $Q$ with the direct sum as the addition operation and the left $\Gamma$-action given by \[(\gamma, [Q])\mapsto [(\gamma^{-1})Q].\]

The formulas 
$(\gamma)\Hom_R(P, R)=\Hom_R(P(\gamma^{-1}), R)$  and $\Hom_R(Q, R)(\gamma)=\Hom_R((\gamma^{-1})Q, R)$
(see \cite[\S 2.4]{NvO_book} or  \cite[\S 1.2.3]{Roozbeh_book}) imply that the operation of taking the dual $\Hom_R( \,\underline{\hskip.3cm}
\,, R)$ induces a $\Gamma$-monoid isomorphism between 
$\V^{\Gamma}_l(R)$ and $\V^{\Gamma}_r(R)$, so we identify these two monoids and use $\V^{\Gamma}(R)$ to denote them.  

In addition, $\V^{\Gamma}(R)$ can also be represented using the equivalence classes of homogeneous idempotent matrices, as has been done in \cite[\S 3.2]{Roozbeh_book}. In particular, the definitions and results of \cite[\S 3.2]{Roozbeh_book} carry to the case when $\Gamma$ is not necessarily abelian as follows. The action of $\gamma$ on an equivalence class of a homogeneous idempotent $p\in \M_n(R)(\gamma_1,\dots,\gamma_n)_1$ is an equivalence class of $p$ when $p$ is considered as an element of $\M_n(R)(\gamma_1\gamma^{-1},\dots,\gamma_n\gamma^{-1})_1.$ This agrees with the above action on $\V^{\Gamma}(R)$ since if $P$ is a finitely generated graded projective  left module which is a direct summand of $\bigoplus_{i=1}^n R(\gamma_i^{-1})$ and $p\in \M_n(R)(\gamma_1,\ldots,\gamma_n)_1$ is a graded homomorphism which corresponds to the projection onto $P,$ then $P(\gamma)$ is a direct summand of $\bigoplus_{i=1}^n R(\gamma_i^{-1})(\gamma)=\bigoplus_{i=1}^n R(\gamma\gamma_i^{-1})$
and $p\in \M_n(R)(\gamma_1\gamma^{-1},\ldots,\gamma_n\gamma^{-1})_1$ corresponds to the projection onto $P(\gamma).$ Thus, we can identify $\V^{\Gamma}(R)$ with the $\Gamma$-monoid of graded equivalence classes of homogeneous idempotent matrices. 

The \emph{Grothendieck $\Gamma$-group}  $K_0^{\Gamma}(R)$ is defined as the group completion of  the $\Gamma$-monoid $\V^{\Gamma}(R)$ which naturally inherits 
the action of $\Gamma$ from $\V^{\Gamma}(R)$. If $\Gamma$ is trivial, $K_0^{\Gamma}(R)$ is the usual $K_0$-group. 

The author prefers to use the terminology ``Grothendieck $\Gamma$-group'' over ``graded Grothendieck group'' used in \cite{Roozbeh_book}. This is because $K_0^\Gamma(R)$ of a graded ring $R$ is not itself graded by $\Gamma$ making the use of ``the graded Grothendieck group'' misleading. Secondly,  if a ring $R$ is naturally graded by two different groups $\Gamma_1$ and $\Gamma_2$, the notation  $K_0^{\gr}(R),$ used in both cases, does not specify which group we are referring to while writing the groups as $K_0^{\Gamma_1}(R)$ and $K_0^{\Gamma_2}(R)$ clearly specifies them. Moreover, a $\Gamma$-action on the Grothendieck group of a ring may arise from other structures, not necessarily grading, as section \ref{subsection_smash} illustrates. For these reasons, we chose to use the notation $K_0^{\Gamma}$ instead of $K_0^{\gr}$. 

We also note that $K_0^{\Gamma}$ is indeed a functor. The formulas $(\gamma)(P\otimes_R Q)=(\gamma)P\otimes_R Q$  and $(P\otimes_R Q)(\gamma)=P\otimes_R Q(\gamma)$ hold for a $\Gamma$-graded ring $R$, a graded right $R$-module $P$ and a graded left $R$-module $Q.$ Thus,  
a graded homomorphism of graded rings $\phi: R\to S$ induces a $\Gamma$-monoid map $\V^{\Gamma}(R)\to \V^{\Gamma}(S)$ given by $[P]\mapsto [P\otimes_R S]$
(respectively $[Q]\mapsto [S\otimes_R Q]$) which extends to a $\Zset[\Gamma]$-module homomorphism $K_0^{\Gamma}(\phi): K_0^{\Gamma}(R)\to K_0^{\Gamma}(S).$ 

\subsection{Graded Morita theory}\label{subsection_Morita}
In \cite[Proposition 2.1.1. and Corollary 2.1.2]{Roozbeh_book}, a Morita equivalence between appropriate categories of $\Gamma$-graded rings and algebras is shown in the case when $\Gamma$ is abelian. In \cite[Remark 2.1.6]{Roozbeh_book}, this equivalence is expanded to the case when $\Gamma$ is not necessarily abelian. We state these claims in the paragraph below. 

If $R$ is a $\Gamma$-graded unital ring and $\ol \gamma = (\gamma_1, \ldots, \gamma_n)$ is in $\Gamma^n,$ then the categories of graded right $R$-modules and the graded right $\M_n(R)(\ol \gamma)$-modules are equivalent. The functors $\Phi$ and $\Psi$ below are mutually inverse functors realizing the equivalence.
\[\Phi: M \mapsto M \otimes_R \bigoplus_{i=1}^n R(\gamma_i^{-1})\hskip1cm \Psi: N \mapsto  N\otimes_{\M_n(R)(\ol \gamma)} \bigoplus_{i=1}^n (\gamma_i)R\] 
In particular, the following graded $R$-bimodules and  graded  $\M_n(R)(\ol\gamma)$-bimodules are graded isomorphic for any $\ol \gamma\in \Gamma^n.$
\[R\cong_{\gr} \bigoplus_{i=1}^n R(\gamma_i^{-1})\otimes_{\M_n(R)(\ol \gamma)} \bigoplus_{i=1}^n (\gamma_i)R\hskip1cm \M_n(R)(\ol\gamma)\cong_{\gr}  \bigoplus_{i=1}^n (\gamma_i)R\otimes_{R}\bigoplus_{i=1}^n R(\gamma_i^{-1})\]
The functors $\Phi$ and $\Psi$ commute with the shift functors so $\Phi$ induces a $\Zset[\Gamma]$-module isomorphism 
$K_0^{\Gamma}(R)\cong K_0^{\Gamma}(\M_n(R)(\ol\gamma))$ 
for any $\Gamma$-graded unital ring $R$ and $\ol \gamma\in \Gamma^n.$ 

The following lemma generalizes \cite[Lemma 5.1.2]{Roozbeh_book} to the case when $\Gamma$ is not necessarily abelian and when a $\Gamma$-graded division ring $K$ is not necessarily commutative. 
\begin{lemma}
If $K$ is a $\Gamma$-graded division ring, $R$ is a graded matricial ring over $K$, and $P$ and $Q$ are two finitely generated graded projective $R$-modules, then 
$P\cong_{\gr} Q$ if and only if $[P]=[Q]$ in $K_0^{\Gamma}(R).$ 
\label{iso_carries_from_K0} 
\end{lemma}
The proof of \cite[Lemma 5.1.2]{Roozbeh_book} uses the equivalence of the categories above and 
\cite[Proposition 3.7.1]{Roozbeh_book}. We formulate this last result in the case when $\Gamma$ and $K$ are not necessarily commutative in section \ref{subsection_division_ring}. This directly translates the proof of \cite[Lemma 5.1.2]{Roozbeh_book} into a proof of Lemma \ref{iso_carries_from_K0}. 

When presented for arbitrary $\Gamma$ using multiplicative notation, \cite[Proposition 1.3.16]{Roozbeh_book} states that for a $\Gamma$-graded unital ring $R$ and $\gamma_1,\ldots,\gamma_n,$ $\delta_1,\ldots,\delta_m\in \Gamma,$ the following conditions are equivalent. 
\begin{enumerate}
\item $(\gamma_1)R\oplus\ldots\oplus (\gamma_n)R \cong_{\gr} (\delta_1)R\oplus\ldots\oplus (\delta_m)R$ as graded right $R$-modules. 
 
\item $R(\gamma_1^{-1})\oplus\ldots\oplus R(\gamma_n^{-1}) \cong_{\gr} R(\delta_1^{-1})\oplus\ldots\oplus R(\delta_m^{-1})$ as graded left $R$-modules. 

\item There are $a\in\M_{n\times m}(R)[\ol\gamma][\ol\delta]$ and $b\in\M_{m\times n}(R)[\ol\delta][\ol\gamma]$ such that $ab=1_{\M_n(R)(\ol\gamma)}$ and $ba=1_{\M_m(R)(\ol\delta)}.$
\end{enumerate}

The proof of \cite[Proposition 1.3.16]{Roozbeh_book} carries over to the case when $\Gamma$ is not necessarily abelian. This proposition implies that  
$(\gamma)R$ is graded isomorphic to $(\delta)R$ if and only if there is an invertible element $a\in R_{\gamma\delta^{-1}}$ (whose inverse is then necessarily in $R_{\delta\gamma^{-1}}).$ 

If $K$ is a $\Gamma$-graded division ring, $\Gamma_K=\{\gamma\in\Gamma \,|\, K_\gamma\neq 0\}$ is a subgroup of $\Gamma.$ So, for any $\gamma,\delta\in\Gamma,$
\begin{center}
$(\gamma)K\cong_{\gr}(\delta)K$ iff $\gamma\delta^{-1}\in \Gamma_K$  iff $\gamma^{-1}\Gamma_K=\delta^{-1}\Gamma_K$  iff $\Gamma_K\gamma=\Gamma_K\delta.$ 
\end{center}

Let us consider $R=\M_n(K)(\gamma_1,\ldots, \gamma_n)$ for any $\gamma_1,\ldots, \gamma_n\in \Gamma$ now. If $e_{ij}$ denotes the $(i,j)$-th standard graded matrix unit of $R$, we have that $[e_{ii}R]\in \V^\Gamma(R)$ corresponds to $[e_{ii}(\bigoplus_{i=1}^n(\gamma_i)K)]=[(\gamma_i)K]\in \V^\Gamma(K)$ under the isomorphism induced by the equivalence $\Phi.$ By the discussion above and Lemma \ref{iso_carries_from_K0}, for any $i,j=1,\ldots, n,$ the following conditions are equivalent. 

\begin{center}
$[e_{ii}R]=[e_{jj}R]\;$ iff $[(\gamma_i)K]=[(\gamma_j)K]$ iff $(\gamma_i)K\cong_{\gr}(\gamma_j)K$ iff\\ 
$\gamma_i\gamma_j^{-1}\in \Gamma_K\;$  iff $\gamma_i^{-1}\Gamma_K=\gamma_j^{-1}\Gamma_K\;$ iff  $\gamma_i^{-1}[K]=\gamma_j^{-1}[K].\;\;\;$
\end{center} 

The relation $(\gamma_i)K=(\gamma_j\gamma_j^{-1}\gamma_i)K=(\gamma_j^{-1}\gamma_i)(\gamma_j)K$ implies that 
\begin{center}
$[(\gamma_i)K]=\gamma_i^{-1}\gamma_j[(\gamma_j)K]$ and $[e_{ii}R]=\gamma_i^{-1}\gamma_j[e_{jj}R]$
\end{center} 
for any $i,j=1,\ldots, n.$ The relation $R=\bigoplus_{i=1}^n e_{ii}R$ and the relations above, imply that
\[[R]=\sum_{i=1}^n[e_{ii}R] = \sum_{i=1}^n\gamma_{i}^{-1}\gamma_{1}[e_{11}R].\]

\subsection{Pre-ordered \texorpdfstring{$\Gamma$}{TEXT}-groups}\label{subsection_ordered_groups}

If $\Gamma$ is a group and $G$ an abelian group, a group homomorphism $\Gamma\to \Hom_{\Zset}(G, G)$ produces a left action of $\Gamma$ on $G.$ This action uniquely determines a left $\Zset[\Gamma]$-module structure on $G.$ In this case, $G$ is a $\Gamma$-group or a left $\Zset[\Gamma]$-module. 

Let $\geq$ be a reflexive and transitive relation (a pre-order) on a $\Gamma$-group $G$ such that $g_1\geq g_2$ implies $g_1 + h\geq g_2 + h$ and $\gamma g_1 \geq \gamma g_2$ for all $g_1, g_2, h\in G$ and $\gamma\in \Gamma.$ We say that such group $G$ is a {\em pre-ordered $\Gamma$-group} or a {\em pre-ordered $\Zset[\Gamma]$-module}. Note that \cite{Roozbeh_book} uses the terminology $\Gamma$-pre-ordered module for this concept. We shall not be using this terminology in order to avoid the implication that $G$ is pre-ordered by $\Gamma$ instead of by a relation $\geq.$
For example, the group ring $\Zset[\Gamma]$ is a pre-ordered $\Gamma$-group with the usual order given by $\sum k_\gamma\gamma\geq \sum m_\gamma\gamma$ if $k_\gamma\geq m_\gamma$ for all $\gamma.$  

If $G$ is a pre-ordered $\Gamma$-group, the set $G^+=\{ x\in G\,|\, x\geq 0\},$ called the cone of $G,$ is a $\Gamma$-monoid. We refer to its elements as the {\em positive elements} of $G$. Any additively closed subset of $G$ which contains 0 and is closed under the action of $\Gamma,$ defines a pre-order $\Gamma$-group structure on $G$. Such set $G^+$ is {\em strict} if $G^+\cap (-G^+)=\{0\}$ and this condition is equivalent with the pre-order being a partial order. In this case, we say that $G$ is an {\em ordered $\Gamma$-group}. If $\Zset^+$ is the set of nonnegative integers, $\Delta$ a subgroup of $\Gamma,$ and $\Gamma/\Delta$ the set of left cosets, the set $\Zset^+[\Gamma/\Delta]$ is a strict cone of the $\Gamma$-group $\Zset[\Gamma/\Delta],$ making $\Zset[\Gamma/\Delta]$ into an ordered $\Gamma$-group.  
 
An element $u$ of a pre-ordered $\Gamma$-group $G$ is an \emph{order-unit} if $u\in G^+$ and for any $x\in G$, there are a positive integer $n$ and $\gamma_1,\dots, \gamma_n \in \Gamma$ (not necessarily distinct) such that $x\leq \sum_{i=1}^n \gamma_i u.$ This condition holds if and only if $u\in G^+$ and for any $x\in G$ there is a nonzero $a\in \Zset^+[\Gamma]$ such that $x\leq au.$ Indeed, one direction follows by taking $a=\sum_{i=1}^n \gamma_i$ and the other by writing $a=\sum k_\gamma\gamma$ as $ \sum_{i=1}^n \gamma_i$ where any term of the form $k_\gamma \gamma$ with $k_\gamma$ positive is written as the sum $\gamma+\gamma+\ldots+\gamma$ of $k_\gamma$ terms. For example, if $\Delta$ is a subgroup of $\Gamma,$ $\Delta$ is an order-unit of the ordered $\Gamma$-group  $\Zset[\Gamma/\Delta]$. Indeed, for $x=\sum k_\gamma(\gamma\Delta)$ one can take $a=\sum |k_\gamma|\gamma$ and have $au-x\geq 0.$ 

If $H$ is a subgroup of a pre-ordered $\Gamma$-group $G$ which is closed under the $\Gamma$-action, then $H$ is also a pre-ordered $\Gamma$-group.  
Such subgroup $H$ is upwards directed if for $x,y\in H,$ there is $z\in H$ such that $x\leq z$ and $y\leq z.$ The property of being downwards directed is defined analogously and, by  \cite[Proposition 15.16]{Goodearl_book}, the following conditions are equivalent: (1) $H$ is upwards directed, (2) $H$ is  downwards directed, (3) $H=(H\cap G^+)+(H\cap -G^+).$ We say that a subgroup which satisfies any of these conditions is {\em directed}.  
The existence of an order-unit implies that $G=G^++(-G^+)$ and so $G$ is directed. 

If $G$ and $H$ are pre-ordered $\Gamma$-groups, a $\Zset[\Gamma]$-module homomorphism $f: G\to H$ is {\em order-preserving} or {\em positive} if $f(G^+)\subseteq H^+.$ If $G$ and $H$ are pre-ordered $\Gamma$-groups with order-units $u$ and $v$ respectively, an order-preserving $\Zset[\Gamma]$-module homomorphism $f: G\to H$ is
{\em order-unit-preserving} (or normalized using the terminology from \cite{Goodearl_interpolation_groups_book}) if  $f(u)=v.$

We let $\POG_\Gamma$ denote the category whose objects are pre-ordered $\Gamma$-groups and whose morphisms are order-preserving $\Zset[\Gamma]$-homomorphisms, we let $\OG_\Gamma$ denote the category whose objects are ordered $\Gamma$-groups and whose morphisms are morphisms of $\POG_\Gamma,$ and we let $\POG^u_\Gamma$ denote the category whose objects are pairs $(G, u)$ where $G$ is an object of $\POG_\Gamma$ and $u$ is an order-unit of $G$, and whose morphisms are morphisms of $\POG_\Gamma$ which are order-unit-preserving. Finally, we let $\OG^u_\Gamma$ denote the category whose objects are pairs $(G, u)$ of $\POG^u_\Gamma$ such that $G$ is also an object of $\OG_\Gamma$ and whose morphisms are morphisms of $\POG^u_\Gamma.$  

The image of the $\Gamma$-monoid $\V^{\Gamma}(R)$ of a $\Gamma$-graded unital ring $R$ under the natural map $\V^{\Gamma}(R)\to K_0^{\Gamma}(R)$ is a cone making $K_0^{\Gamma}(R)$ into a pre-ordered $\Gamma$-group. The element $[R]$ is an order-unit and so $K_0^{\Gamma}(R)$ is directed. Any graded ring homomorphism $\phi$ of graded rings gives rise to $K_0^{\Gamma}(\phi)$ which is a morphism of $\POG_\Gamma$ by section \ref{subsection_Grothendieck}. If $\phi$ is also unit-preserving, then $K_0^{\Gamma}(\phi)$ is a morphism of $\POG_\Gamma^u.$  

\subsection{The Grothendieck \texorpdfstring{$\Gamma$}{TEXT}-group of a graded division ring}\label{subsection_division_ring}
If $K$ is a $\Gamma$-graded division ring, recall that the support $\Gamma_K$ is a subgroup of $\Gamma$. If $\Gamma$ is abelian,
the map
\[\V^{\Gamma}(K)\to \Zset^+[\Gamma/\Gamma_K]\mbox{ given by }\left[(\gamma_1^{-1})K^{p_1}\oplus\ldots\oplus (\gamma_n^{-1})K^{p_n}\right]\mapsto \sum_{i=1}^n p_i(\gamma_i\Gamma_K)\]
is a canonical isomorphism of $\Gamma$-monoids by \cite[Proposition 3.7.1]{Roozbeh_book}. Note that the version of this map representing $\V^{\Gamma}(K)$ using graded left projective modules is given by $\left[K^{p_1}(\gamma_1)\oplus\ldots\oplus K^{p_n}(\gamma_n)\right]\mapsto \sum_{i=1}^n p_i(\gamma_i\Gamma_K).$ 
The proof of \cite[Proposition 3.7.1]{Roozbeh_book} uses \cite[Proposition 1.3.16]{Roozbeh_book}, which we stated for arbitrary $\Gamma$ in section \ref{subsection_Morita}, and the fact that every graded module over a graded division ring is graded free which is proven for arbitrary $\Gamma$ in \cite[Proposition 4.6.1]{NvO_book}. The rest of the proof of \cite[Proposition 3.7.1]{Roozbeh_book} does not use the fact that $\Gamma$ is abelian so it carries to the general case. This $\Gamma$-monoid isomorphism induces an order-preserving $\Gamma$-group isomorphism
\[K_0^{\Gamma}(K)\cong \Zset[\Gamma/\Gamma_K]\]  with an order-preserving inverse. 

\subsection{Pre-ordered bigroups}\label{subsection_ordered_bigroups}
A $\Gamma$-graded ring can have another group act on its Grothendieck $\Gamma$-group besides $\Gamma$. For example, if $R$ is a $\Gamma$-graded $*$-ring, both $\Gamma$ and $\Zset_2$ act on $K_0^{\Gamma}(R).$ 

Let $\Gamma_1$ and $\Gamma_2$ be two groups and let $G$ be an abelian group which is a $\Zset[\Gamma_1]$-$\Zset[\Gamma_2]$-bimodule. We refer to such $G$ as an $\Gamma_1$-$\Gamma_2$-bigroup. Let $\geq$ be a reflexive and transitive relation on a bigroup $G$ such that $g_1\geq g_2$ implies $g_1 + h\geq g_2 + h,$ $\gamma_1 g_1 \geq \gamma_1 g_2,$ and $ g_1\gamma_2 \geq  g_2\gamma_2$ for any $g_1, g_2, h\in G$ and $\gamma_i\in \Gamma_i$ for $i=1,2.$ In this case, we say that $G$ is a {\em pre-ordered $\Gamma_1$-$\Gamma_2$-bigroup}. In this paper, we consider just the case when $\Gamma_2$ is $\Zset_2.$ 
Let us denote the action of the nonzero element of $\Zset_2$ on a group $G$ by $x\mapsto x^*.$ An element $u$ of a pre-ordered $\Gamma$-$\Zset_2$-bigroup $G$ is an \emph{order-unit} if $u$ is an order-unit of $G$ as a $\Gamma$-group and $u^*=u.$
For example, consider the trivial action of $\Zset_2$ and the usual $\Gamma$-action on $\Zset[\Gamma/\Delta]$ for some subgroup $\Delta$ of $\Gamma.$ Then  $\Zset[\Gamma/\Delta]$ is an ordered $\Gamma$-$\Zset_2$-bigroup and $\Delta$ is an order-unit.

Let $G$ be a pre-ordered $\Gamma$-$\Zset_2$-bigroup and let $x\mapsto x^*$ denote the action of the nontrivial element of $\Zset_2.$ Let $\POG^*_\Gamma,$ $\OG^*_\Gamma,$ $\POG^{u*}_\Gamma,$ and $\OG^{u*}_\Gamma$ denote the categories $\POG_\Gamma,$ $\OG_\Gamma,$ $\POG^u_\Gamma,$ and $\OG^u_\Gamma$ with the following additional requirements: the objects are also objects in the category of $\Zset[\Gamma]$-$\Zset[\Zset_2]$-bimodules, the morphisms are also $\Zset[\Gamma]$-$\Zset[\Zset_2]$-bimodule homomorphisms and, for $\POG^{u*}_\Gamma,$ and $\OG^{u*}_\Gamma,$ the action of $\Zset_2$ fixes the order-units.

\subsection{The Grothendieck \texorpdfstring{$\Gamma$-$\Zset_2$}{TEXT}-bigroup}

If $R$ is a $*$-ring, then $*$ is a ring isomorphism mapping $R$ onto the opposite ring $R^{op}$ where $(R^{op}, +)$ is the same as $(R, +)$ and the multiplication is defined by $a\circ b=ba$. If $R$ is a $\Gamma$-graded $*$-ring, then $R^{op}$ is $\Gamma$-graded by $R^{op}_\gamma=R_{\gamma^{-1}}$ and $*$ is a graded ring isomorphism $R\to R^{op}$. Every graded right $R$-module $M$ has the graded left module structure given by $rm=mr^*.$ We let $M^{op}$ denote this graded left module and, analogously, we use $N^{op}$ to denote the graded right module obtained in this way from a graded left module $N$. 
If $M^*=\Hom_R(M,R)$ denotes the dual of a finitely generated graded left or right module $M,$ then 
\[
\Hom_R(M, R)^{op} \cong_{\gr} \Hom_R(M^{op}, R)
\]
which can be shown directly following the definitions, analogously to \cite[Lemma 1.2]{Roozbeh_Lia}. Thus, the map 
$[P]\mapsto [(P^{op})^*]$ defines a $\Zset_2$-action on $\V^{\Gamma}(R).$ The formulas $\Hom_R(P, R)(\gamma)=\Hom_R((\gamma^{-1})P, R)$ and $P^{op}(\gamma)=((\gamma^{-1})P)^{op}$ imply that the actions of $\Gamma$ and $\Zset_2$ commute and so $\V^{\Gamma}(R)$ is a $\Gamma$-$\Zset_2$-bimonoid. 
The $\Gamma$-group $K_0^{\Gamma}(R)$ inherits the $\Gamma$-$\Zset_2$-action from $\V^{\Gamma}(R)$ and becomes a pre-ordered $\Gamma$-$\Zset_2$-bigroup. The $\Zset_2$-action on $\V^{\Gamma}(R),$ represented via the homogeneous idempotents, is given by $[p]^*=[p^*]$ where $p^*$ is the $*$-transpose of the matrix $p\in \M_n(R)(\gamma_1, \ldots, \gamma_n).$ 

If $R$ is a unital $\Gamma$-graded $*$-ring and $F$ a finitely generated graded free $R$-module, then $[F]=[F]^*$ in $K_0^{\Gamma}(R).$ This claim can be shown by noting that the map $R\to (R^{op})^*$ mapping $a$ to the left multiplication by $a^*,$ is a graded module isomorphism which induces a graded isomorphism between $F$ and $(F^{op})^*$ (more details can be found in \cite[Lemma 1.8]{Roozbeh_Lia} which holds when $\Gamma$ is not abelian also). As a corollary of this claim, $[R]$ is an order-unit of $K_0^{\Gamma}(R)$ as a pre-ordered bigroup.
  
In the remark below, we relate the functor ${}^{op}$ to the functor ${}^{(-1)}$ used in \cite{Roozbeh_book} and \cite{Roozbeh_Lia}. Readers not interested in this connection can skip this remark.

\begin{remark}\label{remark_inversely_graded}
If $\Gamma$ is an abelian group and $R$ a $\Gamma$-graded ring, one can define the {\em inversely graded ring} $R^{(-1)}$ as $R$ with a $\Gamma$-grading given by $R^{(-1)}_\gamma=R_{\gamma^{-1}}.$ This corresponds to the $(-1)$-st Veronese ring as considered in \cite[Example 1.1.19]{Roozbeh_book}. If $\Gamma$ is not abelian, this definition does not necessarily define a $\Gamma$-grading of $R^{(-1)}$ since $r\in R_\gamma^{(-1)}=R_{\gamma^{-1}}$ and $s\in R_\delta^{(-1)}=R_{\delta^{-1}}$ implies that $rs\in R_{\gamma^{-1}\delta^{-1}}=R_{\delta\gamma}^{(-1)}$ which can be different from $R_{\gamma\delta}^{(-1)}.$ However, if $\Gamma^{op}$ is the opposite group, then $R^{(-1)}$ is a $\Gamma^{op}$-graded ring. The map 
${}^{-1}:\Gamma\to \Gamma^{op}$ is a group isomorphism which can be used to grade $R^{(-1)}$ by $\Gamma$ so that $R^{(-1)}_\gamma,$ defined as $R^{(-1)}_{\gamma^{-1}}$ via ${}^{-1}:\Gamma\to \Gamma^{op},$ is equal to $R_{\gamma}$ and so the identity is a $\Gamma$-graded ring isomorphism $R\to R^{(-1)}.$    

If $R$ is a $\Gamma$-graded $*$-ring, composing the inverse of the graded ring isomorphism $R\to R^{(-1)}$ with the graded isomorphism $*: R\to R^{op},$ produces a graded ring isomorphism $R^{(-1)}\cong_{\gr} R^{op}.$

The claims of  \cite{Roozbeh_Lia} which involve the functor  ${}^{(-1)}$ for a $*$-ring graded by an abelian group, continue to hold for the functor ${}^{op}$ in the case of a $*$-ring graded by a group which is not necessarily abelian.  
\end{remark}

\section{Simplicial \texorpdfstring{$\Gamma$}{TEXT}-groups}\label{section_simplicial_group}

In this section, we define simplicial $\Gamma$-groups and show that they can be realized by graded matricial $*$-rings over $\Gamma$-graded division $*$-rings.  

In the case when $\Gamma$ is trivial, a simplicial group is an abelian group isomorphic to a direct sum of finitely many copies of $\Zset$ equipped with the usual order. If there are $n$ copies of $\Zset,$ the positive cone is exactly the sum of $n$ copies of $\Zset^+.$ This definition is equivalent to the existence of elements $x_1,\ldots, x_n,$ which constitute a $\Zset$-basis called a simplicial basis. The positive cone is $\Zset^+x_1+\ldots+\Zset^+x_n$ in this case. The Grothendieck group of a matricial ring over a division ring has exactly this type of structure. 

By section \ref{subsection_division_ring}, if $R$ is a graded matricial ring over a graded division ring $K$, then $K_0^{\Gamma}(R)$ is isomorphic to a finite direct sum of $\Zset[\Gamma]$-modules of the form $\Zset[\Gamma/\Gamma_K].$ With this example in mind, we define a simplicial $\Gamma$-group as follows. 

Let $\Delta$ be a subgroup of $\Gamma$, let $\Gamma/\Delta$ denote the set of left cosets, and let $\Zset[\Gamma/\Delta]$ denote the permutation module on which $\Gamma$ acts by $(\gamma, \delta\Delta)\mapsto \gamma\delta\Delta.$ Both $\Zset[\Gamma]$ and $\Zset[\Gamma/\Delta]$ are directed ordered $\Gamma$-groups with the cones  $\Zset^+[\Gamma]$ and $\Zset^+[\Gamma/\Delta]$ respectively. Let also 
\[\pi:\Zset[\Gamma]\to \Zset[\Gamma/\Delta]\]
be the natural left $\Zset[\Gamma]$-module map induced by $1\mapsto \Delta$. Note that $\pi$ is order-preserving. Let $J$ be the set indexing the coset representatives $\gamma_j.$ The kernel of $\pi$ is a left $\Zset[\Gamma]$-ideal 
consisting of $x\in\Zset[\Gamma]$ such that there is a finite subset $J_0$ of $J$ and a finite subset $\Gamma_j$ of $\gamma_j\Delta$ such that 
$$x=\sum_{j\in J_0}\sum_{\gamma\in\Gamma_j} k_\gamma\gamma\mbox{ with }\sum_{\gamma\in\Gamma_j} k_\gamma=0\mbox{ for all }j\in J_0.$$

If $G$ is a $\Gamma$-group and $X\subseteq G$, we let \[\Stab(X) = \{\gamma\in\Gamma\,|\, \gamma x=x\;\;\mbox{ for all }\;\;x\in X\}.\]

\begin{definition}
If $G$ is a $\Gamma$-group, the set $X=\{x_1,\ldots,x_n\}\subseteq G$ is a {\em simplicial $\Gamma$-basis} of $G$ if 
the conditions (Stab) and (Ind) below hold. 
\begin{enumerate}
\item[(Stab)] $\Stab(x_i)=\Stab(x_j)$ for every $i,j=1,\ldots,n.$ 
\end{enumerate}
If $\Delta$ denotes $\Stab(x_i)=\Stab(X)$ for any $i,$ we say that $X$ is {\em stabilized} by $\Delta.$
\begin{enumerate}
\item[(Ind)] For every $a_i, b_i\in \Zset[\Gamma], i=1,\ldots, n$ with $\pi(a_i), \pi(b_i)\in \Zset^+[\Gamma/\Delta],$
\[\sum_{i=1}^n a_ix_i=\sum_{i=1}^n b_ix_i\;\;\mbox{ if and only if }\;\;a_i-b_i\in \ker (\pi:\Zset[\Gamma]\to \Zset[\Gamma/\Delta])\mbox{ for all }i=1,\ldots, n.\]
\end{enumerate}
In this case, we let 
\begin{enumerate}
\item[] $G^+=\{\sum_{i=1}^n a_ix_i\,|\, a_i\in \Zset[\Gamma]$ such that $\pi(a_i)\in \Zset^+[\Gamma/\Delta]$ for all $i=1,\ldots, n\}$ 
\end{enumerate}
and call $G^+$ the {\em simplicial $\Gamma$-cone} of the group $G$. We also let $\varnothing$ be the simplicial $\Gamma$-basis of the group $G=\{0\}$ and we let $G^+=\{0\}$ be the simplicial $\Gamma$-cone in this case.  
\label{simplicial_cone_definition}
\end{definition}

It is direct to check that the set $G^+$ as above contains 0, that it is additively closed and closed under the action of $\Gamma.$ So, the use of the terminology ``cone'' is justified. We also note that a simplicial $\Gamma$-cone is strict. Indeed, if we  assume that $\sum_{i=1}^n a_ix_i$ and $\sum_{i=1}^n -a_ix_i$ are both in $G^+,$ then $\pi(a_i), \pi(-a_i)\in \Zset^+[\Gamma/\Delta]$ for all $i=1,\ldots, n.$ Since the positive cone of $\Zset[\Gamma/\Delta]$ is strict, we have that $\pi(a_i)=0$ for all $i$ which, in turn, implies that $\sum_{i=1}^n a_ix_i=0$ by (Ind). Since a simplicial $\Gamma$-cone is strict, it determines a partial order on the $\Gamma$-group $G$ as in Definition \ref{simplicial_cone_definition}.  

\begin{definition} 
A $\Gamma$-group $G$ is a {\em simplicial $\Gamma$-group (or a simplicial $\Zset[\Gamma]$-module)} if either $G=\{0\}$ or if there is a finite simplicial $\Gamma$-basis  $\varnothing \neq X\subseteq G$ such that the simplicial $\Gamma$-cone $G^+,$ defined as in Definition \ref{simplicial_cone_definition}, orders $G$ so that $G$ is directed under the order induced by $G^+.$ 
\label{simplicial_definition}
\end{definition}

We misuse the term basis here since we neither claim the number of basis elements to be uniquely determined nor that every basis is stabilized by the same subgroup. We use this term, however, to match the existing terminology in the case when $\Gamma$ is trivial. In the case when $\Delta$ is a normal subgroup of $\Gamma$, the use of the term basis is justified since a simplicial $\Gamma$-basis corresponds to a basis of a free $\Zset[\Gamma/\Delta]$-module as we show in Proposition \ref{Delta_normal_case}. If $\Gamma$ is abelian, both the number of basis elements as well as the subgroup stabilizing a basis are unique as Proposition \ref{Delta_normal_case} also shows. 

\begin{example}\label{example_free_permutation_module}
(1) For any subgroup $\Delta$ of $\Gamma,$
it is direct to check that a direct sum $F$ of $n$ copies of the permutation module $\Zset[\Gamma/\Delta]$ is a simplicial $\Gamma$-group with a simplicial $\Gamma$-basis $\{e_1,\ldots, e_n\},$ where $e_i=(0, \ldots, 0, \Delta, 0,\ldots, 0)$ with the nonzero element $\Delta$ at the $i$-th place. Indeed, every element of the form $\sum_{i=1}^n \ol a_i e_i$ where $\ol a_i\in \Zset^+[\Gamma/\Delta]$ is equal to $\sum_{i=1}^n a_i e_i$ where $a_i$ is any element of $\pi^{-1}(\ol a_i).$ Conversely, every linear combination of the form 
$\sum_{i=1}^n a_i e_i$ with $a_i\in\Zset[\Gamma]$ and $\pi(a_i)\in \Zset^+[\Gamma/\Delta]$ is equal to $\sum_{i=1}^n \pi(a_i) e_i\in \bigoplus_{i=1}^n \Zset^+[\Gamma/\Delta].$ The condition (Ind) holds since the sum is direct. 

If $\Delta$ is normal, then $\Zset[\Gamma/\Delta]$ is a ring, the module $F$ is a free $\Zset[\Gamma/\Delta]$-module, and $\{e_1,\ldots, e_n\}$ is a $\Zset[\Gamma/\Delta]$-module basis of $F.$  

(2) If $G$ is a simplicial $\Gamma$-group with a simplicial $\Gamma$-basis $X,$ then $X\subseteq G$ implies $\Stab(G)\subseteq \Stab(X).$ If 
$\Gamma=D_3=\langle a,b|a^3=b^2=1, ba=a^2b\rangle,$ $\Delta=\{1,b\}$ and $G=\Zset[\Gamma/\Delta],$ then $G$ is a simplicial $\Gamma$-group with a simplicial $\Gamma$-basis $X=\{\Delta\}$ and $\Stab(G)=\{1\}\subsetneq\Stab(X)=\Delta.$ Proposition \ref{Delta_normal_case} shows that if $\Stab(X)$ is normal, then $\Stab(G)=\Stab(X).$
\end{example}

We prove several properties of a simplicial $\Gamma$-group.

\begin{proposition}\label{properties_of_simplicial}
Let $G$ be a simplicial $\Gamma$-group  with a simplicial $\Gamma$-basis $X=\{x_1, \ldots, x_n\}$ and let $\Delta=\Stab(X).$    
\begin{enumerate}
\item Every element of $G$ can be represented as $\sum_{i=1}^n a_i x_i$ for some $a_i\in \Zset[\Gamma]$ so we say that $X$ {\em generates} $G.$

\item If $a_i, b_i\in \Zset[\Gamma],$  $i=1,\ldots,n,$ then 
\[\sum_{i=1}^na_ix_i=\sum_{i=1}^nb_ix_i\;\;\mbox{ if and only if }\;\;\pi(a_i)=\pi(b_i)\mbox{ for all }i=1,\ldots, n.\]

\item $$G^+=\{\sum_{i=1}^n a_ix_i\; |\; a_i\in \Zset^+[\Gamma]\;\mbox{ for all }\;i=1,\ldots, n\}$$

\item The element $u=\sum_{i=1}^n x_i$ is an order-unit of $G.$ 

\item The group $G$ is isomorphic to a direct sum $F$ of $n$ copies of $\Zset[\Gamma/\Delta]$ as an ordered $\Gamma$-group. 
\end{enumerate}
\end{proposition}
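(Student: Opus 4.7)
The plan is to prove (2) first, since (1), (3), (4), and (5) will all follow routinely from (1) and (2). The content of (2) is that the positivity hypothesis $\pi(a_i), \pi(b_i) \in \Zset^+[\Gamma/\Delta]$ appearing in (Ind) can be dropped, and the natural way to achieve this is by a translation argument. Given $a_i, b_i \in \Zset[\Gamma]$, I would choose a single auxiliary element $c \in \Zset^+[\Gamma]$ large enough that $\pi(a_i + c)$ and $\pi(b_i + c)$ lie in $\Zset^+[\Gamma/\Delta]$ for every $i$ --- concretely, $c = \sum_\gamma m_\gamma \gamma$ with $m_\gamma$ chosen to dominate the absolute value of every coefficient at $\gamma$ appearing in any of the $a_i$ or $b_i$. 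If $\sum a_i x_i = \sum b_i x_i$, adding $\sum c\, x_i$ to both sides and applying (Ind) yields $\pi(a_i - b_i) = 0$; conversely, $\pi(a_i) = \pi(b_i)$ forces $\pi((a_i + c) - (b_i + c)) = 0$, and (Ind) then gives $\sum (a_i + c) x_i = \sum (b_i + c) x_i$, hence the desired equality after subtracting $\sum c\, x_i$.

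For (1) I would exploit that $G$ is directed by hypothesis, so any $g \in G$ decomposes as $g_+ - g_-$ with $g_\pm \in G^+$; the definition of the simplicial $\Gamma$-cone then presents each $g_\pm$ as $\sum a_i^\pm x_i$, hence $g = \sum (a_i^+ - a_i^-) x_i$. For (3), the inclusion $\supseteq$ is immediate because $\Zset^+[\Gamma] \subseteq \pi^{-1}(\Zset^+[\Gamma/\Delta])$. For the reverse inclusion, if $y = \sum a_i x_i \in G^+$ with $\pi(a_i) \in \Zset^+[\Gamma/\Delta]$, I would lift $\pi(a_i)$ to a nonnegative preimage $b_i \in \Zset^+[\Gamma]$ by picking coset representatives with the same coefficients as $\pi(a_i)$; then (2) yields $y = \sum b_i x_i$.

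For (4), part (1) lets me write $x = \sum a_i x_i$, and I split each $a_i = a_i^+ - a_i^-$ into its natural positive and negative parts in $\Zset[\Gamma]$. Setting $c = \sum_j a_j^+ \in \Zset^+[\Gamma]$, the difference $c - a_i = \sum_{j \neq i} a_j^+ + a_i^-$ lies in $\Zset^+[\Gamma]$ for every $i$, so $cu - x = \sum (c - a_i) x_i \in G^+$ by (3); rewriting $c$ as $\sum_k \gamma_k$ with $\gamma_k \in \Gamma$ then expresses $cu = \sum_k \gamma_k u$, verifying the order-unit condition (note $u \in G^+$ since $u = \sum 1 \cdot x_i$ and $1 \in \Zset^+[\Gamma]$). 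For (5), I would define $\psi : G \to F$ by $\psi\bigl(\sum a_i x_i\bigr) = \sum \pi(a_i) e_i$; part (2) makes this well defined, $\Zset[\Gamma]$-linear, and injective (applied with $b_i = 0$), while surjectivity of $\pi$ yields surjectivity of $\psi$. The identification $\psi(G^+) = F^+$ is then exactly (3) combined with the description of $F^+$ from Example \ref{example_free_permutation_module}(1), so $\psi$ is an isomorphism in $\OG_\Gamma$.

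The only subtle point is the construction of the dominating element $c \in \Zset^+[\Gamma]$ used in (2) and (4): because $\Zset[\Gamma]$ is not totally ordered one has to be explicit at the level of coefficients, but once this bookkeeping is carried out, everything else is a straightforward unwinding of the definitions together with the well-definedness assertion (2).
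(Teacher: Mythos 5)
Your proof is correct and follows essentially the same strategy as the paper: every part reduces to (Ind) by forcing coefficients into $\Zset^+[\Gamma]$, with (1), (3), (4), (5) derived exactly as in the paper's argument. The only cosmetic difference is in (2), where you translate by a single dominating element $c$ instead of decomposing each coefficient as $a_i = a_i^+ - a_i^-$ and rearranging; both devices accomplish the same reduction.
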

\begin{proof}
The first claim follows from the assumption that $G$ is directed. Indeed, if $x\in G,$ then $x=x^+-x^-$ for some $x^+, x^-\in G^+.$ By the definition of $G^+,$ one can find $a_i^+$ and $a^-_i\in \Zset[\Gamma]$ such that $\pi(a_i^+), \pi(a_i^-)\in \Zset^+[\Gamma/\Delta]$ for $i=1,\ldots, n$ and such that $x^+=\sum_{i=1}^n a_i^+x_i$ and $x^-=\sum_{i=1}^n a_i^-x_i.$ Letting $a_i=a_i^+-a^-_i,$ we have that $x=x^+-x^-=\sum_{i=1}^n a_ix_i.$ 

To show the second claim, note first that every element of $a\in\Zset[\Gamma]$ can be represented as $a=a^+-a^-$ where $a^+, a^-\in \Zset^+[\Gamma]$ and, hence 
$\pi(a^+), \pi(a^-)\in \Zset^+[\Gamma/\Delta].$ So, for $a_i, b_i\in \Zset[\Gamma],$ $i=1, \ldots, n,$ let $a_i=a_i^+-a_i^-$ and $b_i=b_i^+-b_i^-$ where $a_i^+,a_i^-,b_i^+,b_i^-\in \Zset^+[\Gamma]$ for all $i.$ Then we have the following by (Ind).
\[
\begin{array}{lcl}
\sum_{i=1}^na_ix_i=\sum_{i=1}^nb_ix_i &\Leftrightarrow &\sum_{i=1}^n(a_i^+-a_i^-)x_i=\sum_{i=1}^n(b_i^+-b_i^-)x_i\\
&\Leftrightarrow & \sum_{i=1}^n(a_i^++b_i^-)x_i=\sum_{i=1}^n(b_i^++a_i^-)x_i\\
&\Leftrightarrow & \pi(a_i^++b_i^-)=\pi(b_i^++a_i^-)\mbox{ for all }i=1,\ldots, n \\
&\Leftrightarrow & \pi(a_i^+)-\pi(a_i^-) =\pi(b_i^+)-\pi(b_i^-)\mbox{ for all }i=1,\ldots, n\\
&\Leftrightarrow & \pi(a_i)=\pi(b_i)\mbox{ for all }i=1,\ldots, n.
\end{array}
\]

To show the third claim, let $C$ denote the set on the right-hand side of the equality in condition (3). The inclusion $C\subseteq G^+$ follows from Definition \ref{simplicial_cone_definition}. For the converse, let $x\in G^+$ so that $x=\sum_{i=1}^n a_ix_i$ where $\pi(a_i)\in \Zset^+[\Gamma/\Delta].$ Let $\{\gamma_j\, |\, j\in J\}$ be the set of coset representatives, let $J_i$ be finite subsets of $J,$ and let $\Gamma_{ji}$ be finite subsets of $\gamma_j\Delta$ for $i=1,\ldots,n$ such that $a_i=\sum_{j\in J_i}\sum_{\gamma\in\Gamma_{ij}}k_\gamma\gamma.$ The condition $\pi(a_i)\in \Zset^+[\Gamma/\Delta]$ implies that $k_j=\sum_{\gamma\in\Gamma_{ij}}k_\gamma$ is a nonnegative integer for every $j\in J_i.$ Let $b_i=\sum_{j\in J_i}k_j\gamma_j.$ Then $b_i\in \Zset^+[\Gamma]$ by definition and $\pi(b_i)=\pi(a_i)=\sum_{j\in J_i}k_j\gamma_j\Delta.$ The condition (Ind) then implies that $x=\sum_{i=1}^n a_ix_i=\sum_{i=1}^n b_ix_i\in C.$ 

To show the fourth claim, let $x\in G$ and let $x=\sum_{i=1}^n a_ix_i$ for some $a_i\in \Zset[\Gamma]$ which exist by part (1). For each $a_i=\sum_{j=1}^{l_i} k_{ij}\gamma_{ij},$ let $k$ be the maximum of the absolute values of $k_{ij}$ for all $i=1,\ldots, n$ and all $j=1,\ldots, l_i$ and let $a=\sum_{i=1}^n\sum_{j=1}^{l_i}k\gamma_{ij}.$ Then $a_i\leq a$ for all $i=1,\ldots, n$ so $a-a_i\in \Zset^+[\Gamma]$ which implies that $ax_i-a_ix_i\in G^+$ for all $i=1,\ldots,n.$ Thus, $au-x=\sum_{i=1}^n(ax_i-a_ix_i)\in G^+.$  

To show the last claim, let $f: F\to G$ be defined by $\sum_{i=1}^n a_ie_i\mapsto \sum_{i=1}^n a_ix_i$ where $\{e_1,\ldots, e_n\}$ is the simplicial $\Gamma$-basis of $F$ given in Example \ref{example_free_permutation_module}. Then $f$ is well-defined and injective by part (2), surjective by part (1), a $\Zset[\Gamma]$-module map by definition, and order-preserving with the order-preserving inverse by the definition of $F^+$ and $G^+.$  
\end{proof}

The next proposition deals with two special cases.  

\begin{proposition} 
\begin{enumerate}
\item If $\Delta$ is a normal subgroup of $\Gamma$ and  $G$ is a simplicial $\Gamma$-group with a simplicial $\Gamma$-basis $X=\{x_1,\ldots, x_n\}$ and $\Stab(X)=\Delta,$ then $\Stab(G)=\Delta,$ $\pi$ is both a left and a right $\Zset[\Gamma]$-module homomorphism, $G$ is a $\Zset[\Gamma/\Delta]$-module by $\pi(a)x=ax,$ and $G$ is isomorphic to a direct sum $F$ of $n$ copies of $\Zset[\Gamma/\Delta]$ as a $\Zset[\Gamma/\Delta]$-module.
 
\item If $\Gamma$ is abelian then every simplicial $\Gamma$-basis of a simplicial $\Gamma$-group $G$ is stabilized by the same subgroup of $\Gamma$ and has the same number of elements.  
\end{enumerate}
\label{Delta_normal_case} 
\end{proposition}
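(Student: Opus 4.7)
The plan is to treat the two items in turn, deducing part (2) from part (1) via an invariant basis number argument.

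For part (1), everything unfolds from Proposition~\ref{properties_of_simplicial} together with the fact that normality of $\Delta$ promotes $\pi$ to a surjective ring homomorphism. First, $\Stab(G)\subseteq\Stab(X)=\Delta$ is automatic since $X\subseteq G$. For the reverse inclusion I would fix $\delta\in\Delta$ and $g=\sum_i a_ix_i\in G$ (using Proposition~\ref{properties_of_simplicial}(1)), compute $\delta g=\sum_i(\delta a_i)x_i$, and invoke Proposition~\ref{properties_of_simplicial}(2) to reduce $\delta g=g$ to the equalities $\pi(\delta a_i)=\pi(a_i)$ for all $i$. By the left $\Zset[\Gamma]$-linearity of $\pi$, this amounts to $\delta\cdot(\gamma\Delta)=\gamma\Delta$ for every $\gamma\in\Gamma$, i.e.\ $\gamma^{-1}\delta\gamma\in\Delta$, which is precisely normality.

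Normality also makes $\Zset[\Gamma/\Delta]$ a ring and $\pi$ a ring surjection; in particular $\pi$ is right $\Zset[\Gamma]$-linear and $\ker\pi$ is a two-sided ideal. To install the $\Zset[\Gamma/\Delta]$-action $\pi(a)\cdot x:=ax$ on $G$, I would verify well-definedness: if $\pi(a)=\pi(b)$, then $a-b\in\ker\pi$; writing $x=\sum_i c_ix_i$ via Proposition~\ref{properties_of_simplicial}(1) yields $(a-b)x=\sum_i(a-b)c_ix_i$, and since $\ker\pi$ is a right ideal, each $(a-b)c_i$ lies in $\ker\pi$, so Proposition~\ref{properties_of_simplicial}(2) forces $(a-b)x=0$. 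The module axioms transfer directly from the $\Zset[\Gamma]$-action. Finally, the $\Zset[\Gamma]$-module isomorphism $F\to G$ from Proposition~\ref{properties_of_simplicial}(5) is automatically $\Zset[\Gamma/\Delta]$-linear, because on both sides the $\Zset[\Gamma/\Delta]$-structure is obtained by restriction of scalars along $\pi$, and by Example~\ref{example_free_permutation_module} the module $F$ is the free $\Zset[\Gamma/\Delta]$-module of rank $n$.

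For part (2), when $\Gamma$ is abelian every subgroup is normal, so part (1) applies to every simplicial $\Gamma$-basis. Given two such bases $X,Y$ of $G$ with stabilizers $\Delta,\Delta'$, applying (1) to each yields $\Delta=\Stab(G)=\Delta'$. For the cardinality, part (1) supplies $\Zset[\Gamma/\Delta]^{|X|}\cong G\cong\Zset[\Gamma/\Delta]^{|Y|}$ as $\Zset[\Gamma/\Delta]$-modules, and since $\Gamma/\Delta$ is abelian the ring $\Zset[\Gamma/\Delta]$ is a nonzero commutative ring, hence has invariant basis number (one quotients by a maximal ideal to reach a field), so $|X|=|Y|$.

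The pressure point of the argument is pinning down where normality is actually used: once to make $\Delta$ act trivially on $\Zset[\Gamma/\Delta]$, and once to make $\ker\pi$ a two-sided ideal so that the prescribed $\Zset[\Gamma/\Delta]$-scaling on $G$ is unambiguous. Once those two observations are in hand, everything else is a mechanical unwinding of Proposition~\ref{properties_of_simplicial}.
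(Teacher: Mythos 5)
Your proposal is correct and follows essentially the same route as the paper: the same reduction to Proposition \ref{properties_of_simplicial}, the same use of normality to make $\pi$ right $\Zset[\Gamma]$-linear (equivalently, $\ker\pi$ two-sided) for well-definedness of the $\Zset[\Gamma/\Delta]$-action, and the same invariant-basis-number argument for the cardinality claim. The only (harmless) variations are cosmetic: you verify $\Delta\subseteq\Stab(G)$ through $\pi$ rather than by commuting $\delta$ past the coefficients directly, and you derive the equality of stabilizers in part (2) from part (1) instead of the paper's direct computation.
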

\begin{proof}
Since $X\subseteq G$ implies $\Stab(G)\subseteq \Stab(X)=\Delta,$ let us show the inverse inclusion. Let $\gamma\in\Delta$ and let $x=\sum_{i=1}^na_ix_i$ for some $a_i\in \Zset[\Gamma].$ Then $\gamma x=\sum_{i=1}^n\gamma a_ix_i=\sum_{i=1}^n a_i\gamma_ix_i$ for some $\gamma_i\in \Delta$ for $i=1,\ldots, n$ since $\Delta$ is normal. Thus, $\gamma_i\in \Delta$ implies that $\gamma_ix_i=x_i$ and so $\gamma x=\sum_{i=1}^n a_i\gamma_ix_i=\sum_{i=1}^n a_ix_i=x.$  

The map $\pi$ is also a right $\Zset[\Gamma]$-module map since for every $a\in \Zset[\Gamma]$ and $\gamma\in\Gamma,$ $\pi(a\gamma)=a\gamma\Delta=a\Delta\gamma=\pi(a)\gamma.$ This implies that the map $\Zset[\Gamma/\Delta]\times G\to G$ given by $\pi(a)x=ax$ for any $x\in G$ and $a\in \Zset[\Gamma]$ is well-defined since if $x=\sum_{i=1}^n a_ix_i$ and  $\pi(a)=\pi(b)$ for some $b\in \Zset[\Gamma],$ then $\pi(aa_i)=\pi(a)a_i=\pi(b)a_i=\pi(ba_i)$ so that  $\pi(a)x=\sum_{i=1}^n aa_ix_i=\sum_{i=1}^n ba_ix_i=\pi(b)x$ by part (2) of Proposition \ref{properties_of_simplicial}. This map induces a $\Zset[\Gamma/\Delta]$-module structure on $G.$ It is direct to check that the isomorphism $f$ from the proof of part (5) of Proposition \ref{properties_of_simplicial} is a $\Zset[\Gamma/\Delta]$-module isomorphism. 

To prove part (2), let us assume that $G$ has simplicial $\Gamma$-bases $X$ and $Y$ stabilized by $\Delta_1$ and $\Delta_2$ respectively. By part (1), $\Delta_2=\Stab(G)=\Delta_1.$  The commutative and unital ring $\Zset[\Gamma/\Stab(G)]$ has the invariant basis property and so $X$ and $Y$ have the same number of elements.   
\end{proof}

\subsection{The Grothendieck \texorpdfstring{$\Gamma$}{TEXT}-group of a graded matricial ring over a graded division ring}\label{Grothendieck_of_matricial}

Let $K$ be a $\Gamma$-graded division ring, let $n$ and $p(i)$ be positive integers, and let $\gamma_{i1}, \ldots,\gamma_{ip(i)}\in \Gamma,$ for $i=1,\ldots,n.$ Let $$R=\bigoplus_{i=1}^n\M_{p(i)}(K)(\gamma_{i1},\dots,\gamma_{ip(i)})$$ be a graded matricial ring, let $\pi_i$ be the projection of $R$ onto the $i$-th component, and let $e_{kl}^i\in R$ be such that $\{\pi_i(e_{kl}^i)\,|\, k,l=1,\ldots, p(i)\}$ is the set of the standard graded matrix units of $\pi_i(R).$ 
By sections \ref{subsection_Morita} and \ref{subsection_division_ring}, 
\[\V^{\Gamma}(R)\cong \bigoplus_{i=1}^n\V^{\Gamma}(K)\cong \bigoplus_{i=1}^n \Zset^+[\Gamma/\Gamma_K]\] as $\Gamma$-monoids and, 
under these isomorphisms,
$$[e_{kk}^iR] \mapsto (0, \ldots, 0, [(\gamma_{ik})K], 0,\ldots 0)\mapsto (0, \ldots, 0, \gamma_{ik}^{-1}\Gamma_K, 0,\ldots 0)$$ for all $i=1,\ldots, n$ and $k=1,\ldots, p(i).$ Thus, $\gamma_{i1}[e_{11}^iR]=\gamma_{i1}\gamma_{i1}^{-1}\gamma_{ik}[e_{kk}^iR]=\gamma_{ik}[e_{kk}^iR]$ (see section \ref{subsection_Morita}) corresponds to $(0, \ldots, 0, \Gamma_K, 0,\ldots 0).$  
Extend these $\Gamma$-monoid isomorphisms to order-preserving $\Gamma$-group isomorphisms $$K_0^{\Gamma}(R)\cong \bigoplus_{i=1}^n K_0^{\Gamma}(K)\cong \bigoplus_{i=1}^n \Zset[\Gamma/\Gamma_K].$$ 

The $\Gamma$-group $\bigoplus_{i=1}^n \Zset[\Gamma/\Gamma_K]$ is a simplicial $\Gamma$-group and the set from Example \ref{example_free_permutation_module} is a simplicial $\Gamma$-basis stabilized by $\Gamma_K$. Thus, the $\Gamma$-group $K_0^{\Gamma}(R)$ is also a simplicial $\Gamma$-group and the set 
\[\{\gamma_{11}[e_{11}^1R],\ldots,\gamma_{n1}[e_{11}^nR]\}\]
is a simplicial $\Gamma$-basis stabilized by $\Gamma_K.$ 

Let us partition the ordered list $\gamma_{i1}, \ldots, \gamma_{ip(i)}$  such that the inverses of the elements of the same partition part belong to the same left coset of $\Gamma/\Gamma_K$ and the inverses of the elements from different partition parts belong to different cosets. Let $\gamma_{il1},\dots,\gamma_{ilr_{il}}$ be the $l$-th part for $l=1, \ldots, k_i$ where $k_i$ is the number of partition parts and $r_{il}$ is the number of elements in the $l$-th part of the $i$-th ordered list  $\gamma_{i1}, \ldots, \gamma_{ip(i)}$. Note that we can have $\gamma_{i1}=\gamma_{i11}.$ Thus, we partition $\gamma_{i1},\dots,\gamma_{ip(i)}$ as 
\[\gamma_{i11},\dots,\gamma_{i1r_{i1}},\; \gamma_{i21},\dots,\gamma_{i2r_{i2}},\; \;\;\;\ldots,\;\;\;\; \gamma_{ik_i1},\dots,\gamma_{ik_ir_{ik_i}}\]
and we have that $\sum_{l=1}^{k_i} r_{il}=p(i).$ 

By \cite[Remark 2.10.6]{NvO_book}, the graded rings $\M_{p(i)}(K)(\gamma_{i1}, \ldots, \gamma_{ip(i)})$ and $\M_{p(i)}(K)(\gamma_{i\sigma(1)}, \ldots, \gamma_{i\sigma(p(i))})$ are graded isomorphic for any permutation $\sigma$ of the set $\{1,\ldots, p(i)\}.$ By \cite[Proposition 1.3 (1)]{Roozbeh_Lia},
these rings are also graded $*$-isomorphic. Hence, reordering the elements of $\Gamma$ as above does not change the graded $*$-isomorphism class of the ring. This produces the following relation. 
\[[R]=\sum_{i=1}^n\sum_{k=1}^{p(i)}[e^i_{kk}R] = 
\sum_{i=1}^n\sum_{k=1}^{p(i)}\gamma_{ik}^{-1}\gamma_{i1}[e^i_{11}R]
=\sum_{i=1}^n\sum_{l=1}^{k_i} r_{il} \gamma_{il1}^{-1}\gamma_{i1}[e^i_{11}R]\]

Let $K$ be any division ring now and let $\Delta$ be a subgroup of $\Gamma$. Consider $K$ to be trivially graded by $\Gamma$ and consider the $\Gamma$-grading of the group ring $K[\Delta]$ given by $K[\Delta]_\gamma=K\{\gamma\}$ if $\gamma\in\Delta$ and $K[\Delta]_\gamma=0$ otherwise. This makes $K[\Delta]$ into a $\Gamma$-graded division ring with $\Gamma_{K[\Delta]}=\Delta$. The proof of the proposition below follows from the rest of this section. 

\begin{proposition} Let $\Gamma$ be a group with a subgroup $\Delta,$ let $K$ be a division ring, let $n$ and $p(i)$ be positive integers, and let $\gamma_{i1},\ldots,\gamma_{ip(i)}\in\Gamma$ for $i=1,\ldots, n.$ If $$R=\bigoplus_{i=1}^n \M_{p(i)}(K[\Delta])(\gamma_{i1},\ldots,\gamma_{ip(i)}),$$ $K_0^{\Gamma}(R)$ is a simplicial $\Gamma$-group with a simplicial $\Gamma$-basis $\{\gamma_{11}[e^1_{11}R], \ldots, \gamma_{n1}[e^n_{11}R]\}$ stabilized by $\Delta.$    
\label{simplicial_exists}
\end{proposition}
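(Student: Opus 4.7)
The plan is to recognize this proposition as the specialization of the general analysis carried out in subsection \ref{Grothendieck_of_matricial} to the particular graded division ring $K[\Delta]$, whose graded support is exactly $\Delta$. Almost all the work has already been done; what remains is to identify the pieces.

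First, I would verify that $K[\Delta]$, with the $\Gamma$-grading defined immediately above the proposition, is indeed a $\Gamma$-graded division ring with $\Gamma_{K[\Delta]}=\Delta$. Every nonzero homogeneous element has the form $k\delta$ for some $k\in K^\times$ and some $\delta\in\Delta$, and is invertible with homogeneous inverse $\delta^{-1}k^{-1}\in K[\Delta]_{\delta^{-1}}$; conversely, $K[\Delta]_\gamma=0$ for $\gamma\notin\Delta$, so the support equals $\Delta$.

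Next, I would invoke the discussion of subsection \ref{Grothendieck_of_matricial}, applied with $K$ there replaced by $K[\Delta]$ (and thus with $\Gamma_K$ there replaced by $\Delta$). That discussion produces order-preserving $\Gamma$-group isomorphisms
\[K_0^\Gamma(R)\;\cong\;\bigoplus_{i=1}^n K_0^\Gamma(K[\Delta])\;\cong\;\bigoplus_{i=1}^n \Zset[\Gamma/\Delta],\]
under which the class $\gamma_{i1}[e^i_{11}R]$ corresponds to $\gamma_{i1}\gamma_{i1}^{-1}\Gamma_{K[\Delta]}=\Delta$ in the $i$-th summand and to $0$ in every other summand --- that is, to the tuple $(0,\ldots,0,\Delta,0,\ldots,0)$ which is precisely the element labelled $e_i$ in Example \ref{example_free_permutation_module}.

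Finally, Example \ref{example_free_permutation_module} shows that $\bigoplus_{i=1}^n \Zset[\Gamma/\Delta]$ is a simplicial $\Gamma$-group with simplicial $\Gamma$-basis $\{e_1,\dots,e_n\}$ stabilized by $\Delta$. Transporting this structure back along the displayed isomorphism identifies $K_0^\Gamma(R)$ as a simplicial $\Gamma$-group with simplicial $\Gamma$-basis $\{\gamma_{11}[e^1_{11}R],\dots,\gamma_{n1}[e^n_{11}R]\}$ stabilized by $\Delta$, as claimed. Since the substantive ingredients --- the isomorphism $K_0^\Gamma(K')\cong \Zset[\Gamma/\Gamma_{K'}]$ for any graded division ring $K'$, the graded Morita equivalence between $\M_{p(i)}(K[\Delta])(\overline\gamma)$ and $K[\Delta]$, and the explicit tracking of the classes $[e^i_{kk}R]$ --- have already been established in subsection \ref{Grothendieck_of_matricial}, no real obstacle remains; the proposition simply reads off the resulting structure for the particular choice of graded division ring $K[\Delta]$.
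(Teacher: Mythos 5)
Your proposal is correct and follows exactly the route the paper takes: the paper's own justification is simply that the proposition ``follows from the rest of this section,'' namely the identification $\Gamma_{K[\Delta]}=\Delta$, the Morita and graded-division-ring computations giving $K_0^\Gamma(R)\cong\bigoplus_{i=1}^n\Zset[\Gamma/\Delta]$, and the tracking of $\gamma_{i1}[e^i_{11}R]$ to the standard basis of Example \ref{example_free_permutation_module}. Nothing is missing.
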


\subsection{Realization of a simplicial  \texorpdfstring{$\Gamma$}{TEXT}-group}

We show the converse of Proposition \ref{simplicial_exists}: every simplicial  $\Gamma$-group can be realized as the Grothendieck $\Gamma$-group of a graded matricial ring over a graded division ring. 

\begin{theorem}
If $\Gamma$ is a group and $G$ is a simplicial $\Gamma$-group with an order-unit $u$ and a simplicial $\Gamma$-basis $\{x_1,\ldots, x_n\}$ stabilized by $\Delta,$ then there is a $\Gamma$-graded division ring $K,$ a $\Gamma$-graded matricial ring $R$ over $K,$ and an isomorphism $f$ of $\OG^u_\Gamma$ such that $$f: (K_0^{\Gamma}(R), [R])\cong (G, u).$$ 
If we consider $G$ as an ordered $\Gamma$-$\Zset_2$-bigroup with the trivial $\Zset_2$-action, then $K$ (thus also $R$) can be given a structure of a graded $*$-ring and $f$ can be found in $\OG^{u*}_\Gamma.$   
\label{realization_simplicial}
\end{theorem}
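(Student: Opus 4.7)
The plan is to read off the data of a graded matricial ring $R$ directly from the expansion of the order-unit $u$ in the given simplicial $\Gamma$-basis $\{x_1,\ldots,x_n\}$. Since the realization must produce $\Gamma_K=\Delta$, first pick any division ring $K_0$ and form the $\Gamma$-graded division ring $K=K_0[\Delta]$ supported on $\Delta$ via $K_\gamma=K_0\{\gamma\}$ for $\gamma\in\Delta$ and $K_\gamma=0$ otherwise. By part (3) of Proposition \ref{properties_of_simplicial} write $u=\sum_{i=1}^n a_ix_i$ with $a_i\in\Zset^+[\Gamma]$. Each $a_i$ must be nonzero: if $a_j=0$, every $\Zset^+[\Gamma]$-multiple of $u$ has trivial $x_j$-coefficient, so $au-x_j\in G^+$ would force $\pi(-1)=-\Delta\in\Zset^+[\Gamma/\Delta]$ by Proposition \ref{properties_of_simplicial}(2), which is impossible.

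Next I write each $a_i=\sum_{k=1}^{p(i)}\alpha_{ik}$ as a sum (with repetition) of elements of $\Gamma$, set $\gamma_{ik}=\alpha_{ik}^{-1}$, and form
\[R=\bigoplus_{i=1}^n \M_{p(i)}(K)(\gamma_{i1},\ldots,\gamma_{ip(i)}).\]
Proposition \ref{simplicial_exists} then gives that $K_0^{\Gamma}(R)$ is a simplicial $\Gamma$-group with basis $\{y_i:=\gamma_{i1}[e^i_{11}R]\}_{i=1}^n$ stabilized by $\Delta$. By Proposition \ref{properties_of_simplicial}(5), both $K_0^{\Gamma}(R)$ and $G$ are isomorphic as ordered $\Gamma$-groups to $\bigoplus_{i=1}^n \Zset[\Gamma/\Delta]$, so the $\Zset[\Gamma]$-linear map $f:K_0^{\Gamma}(R)\to G$ determined by $f(y_i)=x_i$ is an isomorphism in $\OG_\Gamma$.

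The order-unit is preserved by design: the identity
\[[R]=\sum_{i=1}^n\sum_{k=1}^{p(i)}\gamma_{ik}^{-1}\gamma_{i1}[e^i_{11}R]=\sum_{i=1}^n\Big(\sum_{k=1}^{p(i)}\gamma_{ik}^{-1}\Big)y_i=\sum_{i=1}^n a_iy_i\]
derived in section \ref{Grothendieck_of_matricial} yields $f([R])=\sum_i a_ix_i=u$, so $f$ is an isomorphism in $\OG^u_\Gamma$. For the $*$-ring case, equip $K_0$ with any involution (the identity suffices) and extend to $K$ by $(k\delta)^*=k^*\delta^{-1}$, making $K$ a graded $*$-division ring and $R$ a graded matricial $*$-ring under componentwise $*$-transpose. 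Each diagonal matrix unit $e^i_{11}$ is self-adjoint, so by the formula $[p]^*=[p^*]$ of section \ref{subsection_ordered_bigroups} and the commutation of the $\Gamma$- and $\Zset_2$-actions one has $y_i^*=y_i$ for all $i$. Hence the $\Zset_2$-action on $K_0^{\Gamma}(R)$ is trivial on the basis and therefore trivial everywhere, so $f$ automatically respects the $\Gamma$-$\Zset_2$-bigroup structure.

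The main engineering step is the choice $\gamma_{ik}=\alpha_{ik}^{-1}$, calibrated so that $\sum_k\gamma_{ik}^{-1}=a_i$ and the derived formula for $[R]$ exactly matches the expansion of $u$. Everything else reduces to bookkeeping with Propositions \ref{properties_of_simplicial} and \ref{simplicial_exists}, the graded Morita theory recorded in section \ref{subsection_Morita}, and the computation of $K_0^{\Gamma}(K)$ from section \ref{subsection_division_ring}.
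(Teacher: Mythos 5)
Your proof is correct and follows essentially the same route as the paper: expand $u=\sum_i a_ix_i$ with $a_i\in\Zset^+[\Gamma]$, build a graded matricial ring over $K=F[\Delta]$ whose shifts are read off from the terms of the $a_i$, and match $[R]$ to $u$ via $[R]=\sum_i a_i\,\gamma_{i1}[e^i_{11}R]$ (the paper merely normalizes first by grouping the terms of each $a_i$ according to left cosets of $\Delta$, which changes nothing essential). One small correction for the involutive part: the identity map is an involution only when $K_0$ is commutative, so you should take $K_0$ to be a field (as the paper does) or otherwise a division ring that admits an involution.
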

\begin{proof}
By part (3) of Proposition \ref{properties_of_simplicial}, we can write $u$ as $\sum_{i=1}^n a_ix_i$ for some $a_i\in\Zset^+[\Gamma]$. 
Let us write $a_i$ as $\sum_{l=1}^{k_i}\sum_{t=1}^{m_i} k_{ilt}\gamma^{-1}_{ilt}$ where $\gamma^{-1}_{ilt}\Delta=\gamma^{-1}_{ilt'}\Delta$ for all $t,t'=1,\ldots m_i,$ $\gamma^{-1}_{ilt}\Delta\neq \gamma^{-1}_{il't'}\Delta$ for all $l\neq l'$ when $l,l'\in \{1,\ldots, k_i\},$ and $k_{ilt}$ are nonnegative integers.  
Since $\gamma^{-1}_{ilt}x_i=\gamma^{-1}_{ilt'}x_i$ for any  $t,t'=1,\ldots m_i,$ we can replace $a_i$ by $\sum_{l=1}^{k_i}\sum_{t=1}^{m_i} k_{ilt}\gamma^{-1}_{il1}.$ 
If we let $r_{il}$ denote $\sum_{t=1}^{m_i} k_{ilt},$ we have that $a_i=\sum_{l=1}^{k_i}r_{il}\gamma^{-1}_{il1}.$

Let $p(i)=\sum_{l=1}^{k_i} r_{il}.$ Since $r_{il}\geq 0$ for all $i$ and all $l,$ $p(i)\geq 0.$ We claim that for all $i,$ there is $l\in \{1, \ldots, k_i\}$ such that $r_{il}>0$ which implies that $p(i)>0.$ Assume the opposite, that for some $i\in\{1,\ldots, n\}$ $r_{il}=0$ for all $l=1,\ldots, k_i.$ This implies that $a_i=0$ and hence  we can write $u=\sum_{j=1, j\neq i}^n a_jx_j.$ Let $b\in\Zset^+[\Gamma]$ be such that $x_i\leq bu=\sum_{j=1, j\neq i}^n ba_jx_j$  so that $\sum_{j=1, j\neq i}^n ba_jx_j-1x_i\geq 0$. Definition \ref{simplicial_cone_definition} implies that $\pi(-1)=\pi(-1(1_\Gamma))=-1\Delta\geq 0$ which is a contradiction. Hence $p(i)>0.$

Let $F$ be any field, let $K=F[\Delta],$ and let $\ol{\gamma_{il1}}$ denote the list $\gamma_{il1}, \ldots, \gamma_{il1}$ of $\gamma_{il1}$ repeated $r_{il}$ times for $i=1,\ldots, n$ and $l=1,\ldots, k_i.$ Let
\[R= \bigoplus_{i=1}^n \M_{p(i)}(K)(\ol{\gamma_{i11}}, \ol{\gamma_{i21}}, \ldots, \ol{\gamma_{ik_i1}}).\] 

The correspondence $f: \gamma_{i11}[e^i_{11}R]\mapsto x_i$ extends to an isomorphism $K_0^{\Gamma}(R)\to G$ of $\OG_\Gamma$ by Definition \ref{simplicial_cone_definition} and Proposition \ref{properties_of_simplicial}. This correspondence is order-unit-preserving since 
$$f([R])=f\left(\sum_{i=1}^n\sum_{l=1}^{k_i} r_{il} \gamma_{il1}^{-1}\gamma_{i11}[e^i_{11}R]\right)=\sum_{i=1}^n\sum_{l=1}^{k_i} r_{il} \gamma_{il1}^{-1}x_i=\sum_{i=1}^n a_i x_i=u.$$ 

To prove the last sentence of the theorem, consider any involution $^*$ on $F$ (one such involution, the identity, always exists) and the standard involution on $K=F[\Delta]$ i.e. $(a\delta)^*=a^*\delta^{-1}$ for $a\in F$ and $\delta\in\Delta.$ This involution makes $K$ into a graded division $*$-ring and any matricial ring over $K$ into a graded $*$-ring. The action of $\Zset_2$ induced by the involution is trivial on the $K_0^{\Gamma}$-group of any matricial ring over a graded division $*$-ring by \cite[Proposition 1.9]{Roozbeh_Lia} and Remark \ref{remark_inversely_graded}, so the last sentence follows from the rest.
\end{proof}

For the proof of Theorem \ref{countable_dim_gr}, we need to adapt the concept of a convex subset and an ideal of a pre-ordered group to our setting. 
If $G$ is a pre-ordered $\Gamma$-group, a subset $H$ of $G$ is {\em convex} if  $x\leq z\leq y$ for $x,y \in H$ and $z\in G,$ implies $z\in H$. It is direct to check that the intersection of convex $\Gamma$-subgroups of $G$ is again a convex $\Gamma$-subgroup. So, for any subset $X$ of $G,$ one can define the convex $\Gamma$-subgroup generated by $X.$ For any $u\in G^+,$ one checks directly that 
\[H=\{x\in G \,|\, -au\leq x\leq au\mbox{ for }a\in \Zset^+[\Gamma]\}\]
is a convex $\Gamma$-subgroup and that $u$ is its order-unit. Moreover, $H$ is the smallest convex $\Gamma$-subgroup which contains $u$ and we say that $H$ is the convex $\Gamma$-subgroup generated by $\{u\}.$

A {\em $\Gamma$-ideal} of a pre-ordered $\Gamma$-group $G$ is any directed and convex $\Gamma$-subgroup of $G$. 

\begin{proposition}
If $G$ is a simplicial $\Gamma$-group with a simplicial $\Gamma$-basis $X$ stabilized by $\Delta,$ then $H$ is a $\Gamma$-ideal of $G$ if and only if $H$ is a $\Gamma$-subgroup of $G$ generated by a subset of $X.$

As a corollary, if $H$ is a $\Gamma$-ideal of $G,$ then there are subsets $Y$ and $Z$ of $X$ such that $Y$ is a simplicial $\Gamma$-basis of $H$ and $Z$ a simplicial $\Gamma$-basis of $G/H.$  
\label{ideals_in_simplicial}
\end{proposition}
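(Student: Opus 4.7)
The plan is to use the isomorphism $G\cong F=\bigoplus_{i=1}^n\Zset[\Gamma/\Delta]$ of ordered $\Gamma$-groups from Proposition \ref{properties_of_simplicial}(5), sending $x_i$ to the $i$-th standard basis vector $e_i$. Under this identification, for any subset $Y\subseteq X$ the $\Gamma$-subgroup $\langle Y\rangle_\Gamma$ generated by $Y$ corresponds to the direct summand indexed by $Y$. For the ``if'' direction, such a summand is clearly closed under the $\Gamma$-action, directed (with order-unit $\sum_{x\in Y}x$ by the argument of Proposition \ref{properties_of_simplicial}(4)), and convex: if $h_1\leq g\leq h_2$ with $h_1,h_2\in\langle Y\rangle_\Gamma$ and $g=\sum_{i=1}^n a_ix_i$, then for each coordinate $x_j\notin Y$ the conditions $g-h_1,h_2-g\in G^+$ force both $\pi(a_j)$ and $-\pi(a_j)$ to lie in $\Zset^+[\Gamma/\Delta]$, so $\pi(a_j)=0$ by strictness of the cone, and $g\in\langle Y\rangle_\Gamma$ follows.

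For the ``only if'' direction, given a $\Gamma$-ideal $H$, set $Y=\{x_i\in X\mid x_i\in H\}$ and $H'=\langle Y\rangle_\Gamma\subseteq H$. Since $H$ is directed, every $h\in H$ writes as $h^+-h^-$ with $h^\pm\in H\cap G^+$, so it suffices to show $H\cap G^+\subseteq H'$. Given $h\in H\cap G^+$, write $h=\sum_{i=1}^n a_ix_i$ with $a_i\in\Zset^+[\Gamma]$ using Proposition \ref{properties_of_simplicial}(3). Each summand $a_ix_i$ lies in $G^+$, so $0\leq a_ix_i\leq h$ and convexity of $H$ gives $a_ix_i\in H$. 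Writing $a_i=\sum_\gamma k_\gamma\gamma$ with $k_\gamma\in\Zset^+$, for any single $\gamma$ with $k_\gamma>0$ the inequality $0\leq \gamma x_i\leq a_ix_i$ and convexity give $\gamma x_i\in H$, whence $x_i=\gamma^{-1}(\gamma x_i)\in H$ by $\Gamma$-closure of $H$, so $x_i\in Y$. Thus for every $i$ with $\pi(a_i)\neq 0$ one has $x_i\in Y$, while if $\pi(a_i)=0$ then $a_i=0$ (since $a_i\in\Zset^+[\Gamma]$); hence $h=\sum_{x_i\in Y} a_ix_i\in H'$. The main technical point, and what makes the proposition nontrivial, is this passage from $a_ix_i\in H$ to $x_i\in H$ via convexity applied one group element at a time, which replaces the obvious ``divide by $a_i$'' move available in the trivial-$\Gamma$ case.

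For the corollary, once $H=\langle Y\rangle_\Gamma$ is established, $Y$ is automatically a simplicial $\Gamma$-basis of $H$ stabilized by $\Delta$, since conditions (Stab), (Ind), and the description of the cone in Definition \ref{simplicial_cone_definition} are inherited from the ambient $G$. Setting $Z=X\setminus Y$, the identification $G\cong F$ carries $H$ to the summand indexed by $Y$, so $G/H\cong\bigoplus_{x\in Z}\Zset[\Gamma/\Delta]$ as $\Gamma$-groups and the image of $G^+$ in the quotient equals $\bigoplus_{x\in Z}\Zset^+[\Gamma/\Delta]$. Thus the cosets $z+H$ for $z\in Z$ form a simplicial $\Gamma$-basis of $G/H$; their common stabilizer is still $\Delta$, since $\gamma z-z\in H$ forces the $z$-coordinate $\pi(\gamma-1)=\gamma\Delta-\Delta$ to vanish, i.e.\ $\gamma\in\Delta$.
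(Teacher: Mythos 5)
Your proof is correct and follows essentially the same route as the paper's: the key step in both is the convexity argument $0\leq \gamma x_i\leq a_i x_i$ (with $a_i x_i\in H$) forcing $\gamma x_i\in H$ and hence $x_i\in H$ by $\Gamma$-closure. The only differences are cosmetic — you work through the isomorphism with $\bigoplus\Zset[\Gamma/\Delta]$ and argue the reverse inclusion directly on $H\cap G^+$ rather than by contradiction, and you add the (correct) explicit check that the quotient basis is still stabilized by $\Delta$.
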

\begin{proof}
The proof parallels \cite[Proposition 3.8 and Corollary 3.9]{Goodearl_interpolation_groups_book} with some additional details. 

To prove the first sentence, let $X=\{x_1, \ldots, x_n\}.$ For any subset $Y$ of $X$, let $$H=\{x\in G\; |\; x=\sum_{i=1}^n a_ix_i,\; a_i\in \Zset[\Gamma] \mbox{ and }\pi(a_i)=0\mbox{ for all } i\mbox{ such that }x_i\notin Y\}.$$ 
The description of $H$ is independent from the representation of an element of $G$ as a linear combination of the basis elements since if $x=\sum_{i=1}^n a_ix_i=\sum_{i=1}^n b_ix_i$ and $\pi(a_i)=0$ for each $i$ such that $x_i\notin Y,$ then $\pi(b_i)=\pi(a_i)=0$ for such $i$ also by part (2) of Proposition \ref{properties_of_simplicial}. It is direct to check that $H$ is convex and directed. By the definition of a simplicial $\Gamma$-basis, $H$ can also be described as $$\{x\in G\; |\; x=\sum_{x_i\in Y} a_ix_i,\; a_i\in \Zset[\Gamma] \}$$
and from this description it is clear that $H$ is a $\Gamma$-subgroup of $G$ generated by $Y.$

Conversely, if $H$ is a $\Gamma$-ideal of $G$, let $Y=\{x_i\, |\, x_i\in H\}.$ It is clear that the $\Gamma$-subgroup $K$ of $G$ generated by $Y$ is contained in $H.$ Suppose that there is $x\in H$ which is not in $K.$ Let $x=\sum_{i=1}^n a_ix_i$ and since $x\notin K,$ there is $j$ such that $x_j\notin Y$ and $\pi(a_j)\neq 0.$ 
Since $H$ is directed, $x=x^+-x^-$ with $x^+, x^-\in H^+.$ Represent $x^+, x^-$ using the basis elements so that $x^+=\sum_{i=1}^n a_i^+x_i$ and  $x^-=\sum_{i=1}^n a_i^-x_i$ and, since $H^+=G^+\cap H,$ with $\pi(a_i^+)\geq 0$ and  $\pi(a_i^-)\geq 0$ for all $i.$ The condition $\pi(a_j^+)-\pi(a_j^-)=\pi(a_j)\neq 0$ implies that at least one of $\pi(a_j^+), \pi(a_j^-)$ is nonzero. If $\pi(a_j^+)\neq 0,$ let $b_j\in\pi^{-1}(\pi(a_j^+))$ be such that $b_j\in \Zset^+[\Gamma]$ and let us represent $b_j$ as a finite sum $\sum k_\gamma\gamma$ with $k_\gamma\in \Zset$ and $k_\gamma>0.$ Then we have that $x^+=\sum_{i=1}^n a_i^+x_i\geq a_j^+x_j=b_jx_j\geq k_\gamma \gamma x_j\geq \gamma x_j \geq 0$ for any $\gamma$ appearing in the sum  $b_j=\sum k_\gamma\gamma.$ The convexity of $H$ then implies that $\gamma x_j\in H.$ Since $H$ is a $\Gamma$-subgroup of $G$, we have that $x_j=\gamma^{-1}\gamma x_j\in H.$ This implies that $x_j\in Y$ which is a contradiction. The case $\pi(a_j^-)\neq 0$ is handled similarly. Thus, we have that $H=K.$

The second sentence of the proposition follows from the first. Let $X$ and $Y$ be as in the first part of the proof,  let $Z=X-Y,$ and let $K$ be the $\Gamma$-subgroup of $G$ generated by $Z.$ Then $H$ and $K$ are simplicial $\Gamma$-groups with simplicial $\Gamma$-bases $Y$ and $Z$ respectively. 
\end{proof}

\section{Direct limits and ultrasimplicial \texorpdfstring{$\Gamma$}{TEXT}-groups}\label{section_limits}

In this section, we introduce ultrasimplicial $\Gamma$-groups (Definition \ref{definition_ultrasimplicial}) and show that every ultrasimplicial $\Gamma$-group is realizable by a $\Gamma$-graded ultramatricial ring over a $\Gamma$-graded division ring (Theorem \ref{realization_ultrasimplicial}).

\subsection{Generating intervals and direct limits} 
The non-unital ring case can be handled by considering generating intervals. 
If $G$ is a pre-ordered $\Gamma$-group, a subset $D$  of $G^+$ is a {\em generating interval} if $D$ is upwards directed, 
convex, and such that every element of $G^+$ is a sum of elements from $\Zset^+[\Gamma]D$. Let $\POG^D_\Gamma$ denote the category whose objects are pairs $(G, D)$ where $G$ is an object of $\POG_\Gamma$ and $D$ is a generating interval of $G$, and whose morphisms $f:(G, D)\to (H, E)$ are morphisms of $\POG_\Gamma$ such that $f(D)\subseteq E.$  
The category $\OG^D_\Gamma$ is defined analogously to $\POG^D_\Gamma$ with the additional requirement that the pre-order of an object of $\OG^D_\Gamma$ is an order. 

\begin{proposition}
The categories $\POG_\Gamma,$ $\OG_\Gamma,$ $\POG^u_\Gamma,$ $\OG^u_\Gamma,$ $\POG^D_\Gamma,$ and $\OG^D_\Gamma$ are closed under the formation of direct limits.  
\label{direct_limits}
\end{proposition}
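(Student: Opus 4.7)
The plan is to construct all six limits on a common underlying object, namely the direct limit $G = \varinjlim G_i$ in the category of $\Zset[\Gamma]$-modules with canonical maps $\phi_i: G_i \to G$, and then equip it with the appropriate positive cone, order-unit, or generating interval inherited from the directed system $\{G_i, \phi_{ij}\}$. Since the module-theoretic direct limit already satisfies the universal property at the level of $\Zset[\Gamma]$-maps, the remaining task is to verify that the induced map out of $G$ into any cocone target respects the additional structure.

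For the pre-ordered cases, I would set $G^+ = \bigcup_i \phi_i(G_i^+)$. Closure under addition follows from the identity $\phi_i(x_i) + \phi_j(y_j) = \phi_k(\phi_{ik}(x_i) + \phi_{jk}(y_j))$ for any $k \geq i, j$, and $\Gamma$-invariance is immediate from the equivariance of each $\phi_i$. The induced $\Zset[\Gamma]$-map $\psi$ from a cocone $(\psi_i: G_i \to H)$ in $\POG_\Gamma$ satisfies $\psi(G^+) \subseteq H^+$ by the very definition of $G^+$. For $\OG_\Gamma$, the additional step is strictness of $G^+$: if $\phi_i(x_i) = -\phi_j(y_j)$ with $x_i, y_j$ both positive, I lift the relation $\phi_{ik}(x_i) + \phi_{jk}(y_j) = 0$ to some $G_k$, where strictness of $G_k^+$ forces each summand to vanish and hence $\phi_i(x_i) = 0$. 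For $\POG^u_\Gamma$ and $\OG^u_\Gamma$, order-unit-preservation of the transition maps makes $\phi_i(u_i)$ independent of $i$; this common element $u$ is an order-unit of $G$ because any $x = \phi_i(x_i)$ satisfies $x_i \leq a u_i$ for some $a \in \Zset^+[\Gamma]$, which pushes forward to $x \leq a u$, and the induced map in the cocone is then automatically order-unit-preserving.

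For $\POG^D_\Gamma$ and $\OG^D_\Gamma$, I would set $D = \bigcup_i \phi_i(D_i)$. Directedness lifts to a common index, and the generating property follows by expressing any $x_i \in G_i^+$ as a $\Zset^+[\Gamma]$-combination of elements of $D_i$ and pushing forward. I expect convexity of $D$ to be the main obstacle, since unlike the other axioms it is a statement about an inequality $\phi_i(x_i) \leq z \leq \phi_j(y_j)$ in which $z$ ranges over all of $G$, not just the image of a single $G_i$. To handle it, I would pick a representative $z = \phi_k(z_k)$, pass to a common index $l \geq i, j, k$ at which both positivity relations $z - \phi_i(x_i) \geq 0$ and $\phi_j(y_j) - z \geq 0$ lift into $G_l^+$, and then apply the convexity of $D_l$ between the $G_l$-lifts of $x_i$ and $y_j$ to conclude that $\phi_{kl}(z_k) \in D_l$, hence $z \in D$. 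Finally, interval-preservation of the induced map on the limit is automatic from the fact that each $\phi_i(D_i)$ lands inside the corresponding image in the target.
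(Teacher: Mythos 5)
Your construction coincides with the paper's: take the direct limit in $\Zset[\Gamma]$-modules, set $G^+=\bigcup_i \phi_i(G_i^+)$, $u=\phi_i(u_i)$, and $D=\bigcup_i \phi_i(D_i)$, and verify strictness, the order-unit property, and the generating-interval axioms by lifting each finitary relation to a common index of the directed system. The argument is correct, and your treatment of convexity of $D$ (lifting both inequalities to a single $G_l$ and applying convexity of $D_l$) is exactly the point the paper leaves to the reader.
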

\begin{proof}
The claim for the first four categories follows directly from the proof of the analogous claims in the case when $\Gamma$ is trivial (\cite[Propositions 1.15 and 1.16]{Goodearl_interpolation_groups_book}). We sketch the proofs for completeness. 

If $I$ is a directed set, $(G_i, f_{ij}), i,j\in I, i\leq j,$ is a directed system in $\POG_\Gamma,$ and $G$ is a direct limit of $(G_i, f_{ij})$ in the category of $\Zset[\Gamma]$-modules with the translational maps $f_i: G_i\to G,$ let $G^+=\bigcup_i f_i(G_i^+).$ It is direct to check that $G^+$ is a cone in $G$ making $G$ a pre-ordered $\Gamma$-group and the translational maps order-preserving homomorphisms.

If $G_i$ are objects in $\OG_\Gamma,$ then one can check that $G^+$ is a strict cone by definition, using that the cone $G_i^+$ is strict for every $i\in I$. 

If $(G_i, u_i)$ are objects in $\POG^u_\Gamma$ and $f_{ij}$ morphisms in $\POG^u_\Gamma,$ let $u=f_i(u_i)$ for some $i.$ Since $f_{ij}(u_i)=u_j,$ $u=f_i(u_i)$ for any $i\in I.$ It is direct to check that $u$ is an order-unit of $G$ and so $f_i, i\in I$ are morphisms in $\POG^u_\Gamma.$   

If $((G_i, D_i), f_{ij})$ is a directed system in $\POG^D_\Gamma,$ and $G$ a direct limit in $\POG_\Gamma$ with the translational maps $f_i,$ the set $D=\bigcup_i f_i(D_i)$ is in $G^+$ and $f_i(D_i)\subseteq D$ by definition. The definitions of $D$ and $G^+$ also imply that every element of $G^+$ is a sum of elements from $\Zset^+[\Gamma]D$. One checks that $D$ is convex and upwards directed using definitions and the properties of direct limit. So, the category $\POG^D_\Gamma$ is closed under the formation of direct limits. The claim for the category $\OG^D_\Gamma$ follows from the statements for $\OG_\Gamma$ and $\POG^D_\Gamma$.  
\end{proof}

Let $\POG^{D*}_\Gamma$ (resp. $\OG^{D*}_\Gamma$) denote the category $\POG^D_\Gamma$ (resp. $\OG^D_\Gamma$) such that an object $(G,D)$ has an additional requirement that $G$ is also an object of $\POG^*_\Gamma$ (resp. $\OG^*_\Gamma$), that a morphism is also a morphism of $\POG^*_\Gamma$ (resp. $\OG^*_\Gamma$) and that $D^*=D$ where $x\mapsto x^*$ denotes the action of the nontrivial element of $\Zset_2.$ The following can be shown analogously to Proposition \ref{direct_limits}.

\begin{corollary}
The categories $\POG^*_\Gamma,$ $\OG^*_\Gamma,$ $\POG^{u*}_\Gamma,$ $\OG^{u*}_\Gamma,$ $\POG^{D*}_\Gamma,$ and $\OG^{D*}_\Gamma$  are closed under the formation of direct limits.  
\label{direct_limits_star}
\end{corollary}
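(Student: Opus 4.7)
The plan is to repeat the construction from the proof of Proposition \ref{direct_limits} and then verify that the $\Zset_2$-structure passes to the direct limit without additional effort. The underlying category of $\Zset[\Gamma]$-$\Zset[\Zset_2]$-bimodules is closed under direct limits, so given a directed system $((G_i, \ast_i), f_{ij})$ in $\POG^*_\Gamma$, I would form the direct limit $G$ in that bimodule category with translational maps $f_i\colon G_i\to G$, and then set $G^+=\bigcup_i f_i(G_i^+)$ exactly as in Proposition \ref{direct_limits}. This already makes $G$ an object of $\POG_\Gamma$ with the $f_i$'s morphisms of $\POG_\Gamma$.

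Next I would check that the $\Zset_2$-action is compatible with the new cone. Since each $f_{ij}$ is a $\Zset[\Gamma]$-$\Zset[\Zset_2]$-bimodule map, the induced involution $\ast$ on $G$ satisfies $f_i(x)^*=f_i(x^*)$. Because each $G_i$ lies in $\POG^*_\Gamma$, we have $(G_i^+)^*=G_i^+$, hence $(G^+)^*=\bigcup_i f_i(G_i^+)^*=\bigcup_i f_i((G_i^+)^*)=\bigcup_i f_i(G_i^+)=G^+$. So $*$ preserves order and $G$ is an object of $\POG^*_\Gamma$, with the $f_i$ morphisms there. The universal property is inherited from the underlying bimodule limit together with the verification that a compatible family of order-preserving bimodule maps factors through a morphism in $\POG^*_\Gamma$; this is routine since positivity and $*$-compatibility are preserved pointwise.

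For $\OG^*_\Gamma,$ the argument for $\OG_\Gamma$ in Proposition \ref{direct_limits} shows strictness of $G^+$, and the observation above takes care of the $\Zset_2$-structure. For $\POG^{u*}_\Gamma$ and $\OG^{u*}_\Gamma$, let $u=f_i(u_i)$ for any $i$; this is independent of $i$ and is an order-unit of $G$ by the proof of Proposition \ref{direct_limits}. Since each $u_i^*=u_i$, we have $u^*=f_i(u_i)^*=f_i(u_i^*)=f_i(u_i)=u$, so $u$ is an order-unit in the bigroup sense. For $\POG^{D*}_\Gamma$ and $\OG^{D*}_\Gamma,$ take $D=\bigcup_i f_i(D_i)$ as in Proposition \ref{direct_limits}; convexity, directedness, and the generating property are inherited from that proof. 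Since each $D_i^*=D_i$ and each $f_i$ commutes with $*$, we again get $D^*=D$.

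I do not anticipate a real obstacle here: every verification is a matter of transporting the corresponding statement from Proposition \ref{direct_limits} and observing that the $\Zset_2$-action, being by bimodule automorphisms that commute with the translational maps, permutes the building blocks $f_i(G_i^+)$, $f_i(D_i)$, and fixes $f_i(u_i)$. The only mild subtlety is to make explicit that the direct limit in the bimodule category exists and carries a canonical $*$-action compatible with the $\Gamma$-action; this follows from the general fact that $\Zset[\Gamma]\otimes_{\Zset}\Zset[\Zset_2]$-modules (equivalently $\Gamma$-$\Zset_2$-bigroups) form an abelian category closed under direct limits.
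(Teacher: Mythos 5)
Your proposal is correct and matches the paper's intent exactly: the paper gives no separate argument for Corollary \ref{direct_limits_star}, stating only that it follows analogously to Proposition \ref{direct_limits}, and your write-up is precisely that analogous argument, with the additional checks that the induced involution satisfies $(G^+)^*=G^+$, $u^*=u$, and $D^*=D$. Nothing is missing.
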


\subsection{Interpolation property and ultrasimplicial \texorpdfstring{$\Gamma$}{TEXT}-groups}

Recall that a pre-ordered group $G$ satisfies the  {\em interpolation property} (also known as the Riesz interpolation property) if the following holds. 
\begin{itemize}
\item For any finite sets $X, Y\subseteq G,$ if $x\leq y$ for every $x\in X, y\in Y,$ then there is $z\in G$ such that $x\leq z\leq y$ for every $x\in X, y\in Y.$
\end{itemize}
By induction, it is sufficient to require this condition to hold for two-element sets $X$ and $Y.$ For partially ordered groups, this property is equivalent to the Riesz decomposition and Riesz refinement properties listed below and the proof of these equivalences can be found in \cite[Theorem IV.6.2]{Davidson} or \cite[Proposition 2.1]{Goodearl_interpolation_groups_book}.

\begin{itemize}
\item For every $x,y_1, y_2\in G^+$ such that $x\leq y_1+y_2,$ there are $x_1,x_2\in G^+$ such that $x=x_1+x_2$ and $x_i\leq y_i$ for $i=1,2.$
 
\item For every $x_1,x_2,y_1, y_2\in G^+$ such that $x_1+x_2=y_1+y_2,$ there are $z_{ij}\in G^+, i,j=1,2$ such that $x_i=z_{i1}+z_{i2}$ and $y_j=z_{1j}+z_{2j}$ for $i,j=1,2.$ 
\end{itemize}

\begin{lemma}
Every simplicial $\Gamma$-group has the interpolation property. As a corollary, every direct limit of a directed system of simplicial $\Gamma$-groups has the interpolation property. 
\label{lemma_simplicial_interpolation}
\end{lemma}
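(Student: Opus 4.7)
The plan is to reduce the first statement to showing that $\Zset[\Gamma/\Delta]$ has the interpolation property (as a partially ordered abelian group), then bootstrap to finite direct sums, and finally use a standard lifting argument for direct limits.

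First, by Proposition \ref{properties_of_simplicial}(5), any simplicial $\Gamma$-group $G$ with basis stabilized by $\Delta$ is isomorphic in $\OG_\Gamma$ to a finite direct sum $F=\bigoplus_{i=1}^n\Zset[\Gamma/\Delta]$. Since the interpolation property depends only on the underlying partially ordered abelian group, I may work with $F.$ Interpolation is straightforwardly preserved under finite direct sums: given $x_1,x_2,y_1,y_2\in F^+$ with $x_1+x_2=y_1+y_2$, applying the Riesz decomposition property coordinate-by-coordinate in each of the $n$ summands and reassembling the results produces the required $z_{ij}\in F^+.$ So it suffices to verify interpolation for a single copy of $\Zset[\Gamma/\Delta].$

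For $\Zset[\Gamma/\Delta]$ with cone $\Zset^+[\Gamma/\Delta],$ the underlying ordered abelian group is isomorphic to the direct sum $\bigoplus_{\gamma\Delta\in\Gamma/\Delta}\Zset$ with the component-wise order. This is a lattice-ordered abelian group (each element has well-defined positive and negative parts by taking the positive and negative parts of each coefficient), and any lattice-ordered abelian group satisfies the Riesz interpolation property: given $x\leq z_1,z_2$ and $z_1,z_2\leq y,$ the element $\max(x,z_1\vee z_2\wedge y)$ interpolates, or more explicitly, one checks the Riesz decomposition property coefficient-wise using the corresponding property in $\Zset.$ This establishes the first statement.

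For the corollary, let $G=\varinjlim G_i$ in $\POG_\Gamma$ with transitional maps $f_i:G_i\to G$ and $f_{ij}:G_i\to G_j,$ where each $G_i$ is a simplicial $\Gamma$-group. Given $x_1,x_2,y_1,y_2\in G$ with $x_s\leq y_t$ for $s,t\in\{1,2\},$ I use the description $G^+=\bigcup_i f_i(G_i^+)$ from the proof of Proposition \ref{direct_limits} to pick lifts $\tilde x_s,\tilde y_t$ and witnesses $z_{st}\in G_{k_{st}}^+$ with $f_{k_{st}}(z_{st})=y_t-x_s,$ then push everything into a common $G_k.$ The images $f_k(f_{?k}(\tilde y_t)-f_{?k}(\tilde x_s)-f_{?k}(z_{st}))$ vanish in $G,$ so by enlarging $k$ once more to some $k'\geq k,$ these differences become zero already in $G_{k'}.$ Then in $G_{k'}$ the lifted inequalities hold strictly, so interpolation in the simplicial group $G_{k'}$ supplies an interpolant which I push forward to $G$ via $f_{k'}.$ The main obstacle, and the only subtle step, is precisely this lifting of inequalities past the direct limit; the rest is formal.
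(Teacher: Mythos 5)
Your proposal is correct and follows essentially the same route as the paper: the paper also reduces, via the simplicial $\Gamma$-basis (equivalently, the isomorphism of Proposition \ref{properties_of_simplicial}(5) with $\bigoplus_{i=1}^n\Zset[\Gamma/\Delta]$), to the Riesz refinement property of $\Zset[\Gamma/\Delta]$ checked coefficient-wise in $\Zset$, and then passes to direct limits by the standard lifting of inequalities to a finite stage. The only cosmetic blemish is your garbled interpolant formula for the lattice-ordered case (the interpolant is simply $x_1\vee x_2$), but your fallback to the coefficient-wise Riesz decomposition covers this.
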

\begin{proof} 
Let $G$ be a simplicial $\Gamma$-group with a simplicial $\Gamma$-basis $X=\{e_1,\ldots, e_n\}$ and let $\Delta=\Stab(X).$  Let $x_1+x_2=y_1+y_2$ for some $x_j, y_k\in G^+, j,k=1,2.$ By part (3) of Proposition \ref{properties_of_simplicial}, we have that  $x_j=\sum_{i=1}^n b_{ji}e_i$ and $y_k=\sum_{i=1}^n c_{ki}e_i$ for some $b_{ji}, c_{ki}\in \Zset^+[\Gamma]$, $j,k=1,2, i=1,\ldots,n,$ and, by part (2) of Proposition \ref{properties_of_simplicial}, $\pi(b_{1i})+\pi(b_{2i})=\pi(c_{1i})+\pi(c_{2i})$ for all $i=1,\ldots,n$ where $\pi$ is the natural left $\Zset[\Gamma]$-module map $\Zset[\Gamma]\to \Zset[\Gamma/\Delta].$ Since $\Zset[\Gamma/\Delta]$ satisfies the interpolation property (which is direct to check using the fact that $\Zset$ satisfies the interpolation property), there are $d_{jk}^i\in\Zset[\Gamma]$ such that $\pi(d_{jk}^i)\geq 0,$ for all $j,k=1,2,$ $i=1,\ldots, n,$ $\pi(d_{j1}^i)+ \pi(d_{j2}^i)=\pi(b_{ji})$ for $j=1,2,$ and  $\pi(d_{1k}^i)+\pi(d_{2k}^i)=\pi(c_{ki})$ for $k=1,2.$

Letting $z_{jk}=\sum_{i=1}^n d_{jk}^ie_i\in G^+, j,k=1,2$ produces the required equations since $$\pi(b_{ji})=\pi(d_{j1}^i)+\pi(d_{j2}^i)\mbox{ implies that }x_j=\sum_{i=1}^n b_{ji}e_i=\sum_{i=1}^n (d_{j1}^i+d_{j2}^i)e_i=z_{j1}+z_{j2}$$ for $j=1,2$ by (Ind) and, similarly, $y_k=z_{1k}+z_{2k},$ for $k=1,2.$

It is direct to check that the interpolation property is preserved under direct limits so the second sentence of the lemma holds. 
\end{proof}

If $(G, u)$ is an ordered group with the interpolation property, then the set $\{x\in G\, |\, 0\leq x\leq u\}$ is a generating interval by the paragraph preceding Lemma 17.8 of \cite{Goodearl_interpolation_groups_book}. We write this set shorter as $[0, u],$ and we formulate and prove the $\Gamma$-group version of this statement below.   

\begin{proposition}
If $(G, u)$ is an ordered $\Gamma$-group with the interpolation property, then $[0, u]$ is a generating interval.  
Thus, if $(G, u)$ is a simplicial $\Gamma$-group, then  $[0, u]$ is a generating interval. 
\label{generating_interval_of_simplicial} 
\end{proposition}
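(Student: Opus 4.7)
The plan is to verify the three defining properties of a generating interval for $D=[0,u]$: convexity, directedness, and the generation condition. Convexity is immediate from the definition of $[0,u]$: if $x,y\in[0,u]$ and $x\le z\le y$, then $0\le x\le z\le y\le u$, so $z\in[0,u]$.

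For directedness, given $x,y\in[0,u]$, I would apply the interpolation property to the finite sets $X=\{0,x,y\}$ and $Y=\{u\}$. Every element of $X$ is $\le u$, so there exists $z\in G$ with $0,x,y\le z\le u$; this $z$ lies in $[0,u]$ and is a common upper bound for $x$ and $y$. For the generation condition, let $w\in G^+$. Since $u$ is an order-unit, there exists $a\in \Zset^+[\Gamma]$ with $w\le au$, and writing $a=\gamma_1+\cdots+\gamma_n$ with $\gamma_i\in\Gamma$ (repetitions allowed, as in section \ref{subsection_ordered_groups}), we get $w\le \gamma_1 u+\cdots+\gamma_n u$ with each $\gamma_i u\in G^+$. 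Since $G$ is an ordered group with interpolation, the Riesz decomposition property holds, so I would apply it inductively to produce $w_1,\dots,w_n\in G^+$ with $w=w_1+\cdots+w_n$ and $0\le w_i\le \gamma_i u$ for each $i$. Setting $d_i=\gamma_i^{-1}w_i$, compatibility of the $\Gamma$-action with the order yields $0\le d_i\le u$, hence $d_i\in[0,u]$, and $w_i=\gamma_i d_i\in \Zset^+[\Gamma]\cdot[0,u]$. Consequently $w=\sum_{i=1}^n\gamma_i d_i$ is a sum of elements of $\Zset^+[\Gamma][0,u]$, establishing the generation condition.

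For the second sentence, if $(G,u)$ is a simplicial $\Gamma$-group then $u$ is an order-unit (part (4) of Proposition \ref{properties_of_simplicial} supplies one if none is specified), and $G$ has the interpolation property by Lemma \ref{lemma_simplicial_interpolation}, so the first part applies directly to yield that $[0,u]$ is a generating interval. I expect the main obstacle to be a purely bookkeeping one: confirming that the inductive Riesz decomposition argument passes through unchanged when one keeps track of the $\Gamma$-action and that $\gamma_i^{-1}w_i\ge 0$ follows from $w_i\ge 0$ via the compatibility axiom $\gamma g\ge 0$ whenever $g\ge 0$. No new ideas beyond those already isolated in Lemma \ref{lemma_simplicial_interpolation} and section \ref{subsection_ordered_groups} should be required.
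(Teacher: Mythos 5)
Your proposal is correct and follows essentially the same route as the paper: convexity is immediate, directedness comes from interpolating between $\{x,y\}$ (or $\{0,x,y\}$) and $\{u\}$, and the generation condition uses the order-unit bound $w\le\sum_i\gamma_i u$ followed by Riesz decomposition and the substitution $d_i=\gamma_i^{-1}w_i$. The only cosmetic difference is that you invoke the Riesz decomposition property by name with an explicit induction, while the paper cites "interpolation" directly, relying on the equivalence it records just before Lemma \ref{lemma_simplicial_interpolation}.
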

\begin{proof}
It is direct to show that $[0, u]$ is convex. To show that $[0,u]$ is upwards directed, let $x,y\in [0, u]$ and let $z$ be an interpolant for $\{x,y\}\leq \{u\}.$ By $0\leq x\leq z\leq u,$ $z\in [0,u]$ and so $[0, u]$ is upwards directed. Lastly, if $x\in G^+,$ we want to represent $x$ as a finite sum of elements from $\Zset^+[\Gamma][0, u].$ Since $u$ is an order-unit, there are $n$ and $\gamma_i\in\Gamma$ for $i=1,\ldots,n$ such that $x\leq \sum_{i=1}^n\gamma_iu.$ By interpolation, there are $x_i\in G^+$ such that $x=\sum_{i=1}^n x_i$ and $x_i\leq \gamma_iu$ for all $i=1,\ldots,n.$ The relations $0\leq x_i\leq \gamma_i u$ imply that $0\leq \gamma_i^{-1}x_i\leq u.$ For $y_i=\gamma_i^{-1}x_i$ we have that $y_i\in [0, u]$ and $x=\sum_{i=1}^n\gamma_iy_i.$   
 
The last sentence follows directly from the first and Lemma \ref{lemma_simplicial_interpolation}.  
\end{proof}

We use Proposition \ref{direct_limits}, Corollary \ref{direct_limits_star}, and Proposition \ref{generating_interval_of_simplicial} to define an ultrasimplicial $\Gamma$-group. 

\begin{definition} Let $\Gamma$ be a group and let $G$ be an ordered and directed $\Gamma$-group. If $u$ is an order-unit of $G,$ $(G, u)$ is an {\em ultrasimplicial $\Gamma$-group} if $(G, u)$ is a direct limit of a sequence of simplicial $\Gamma$-groups $(G_n, u_n)$ with simplicial $\Gamma$-bases stabilized by the same subgroup $\Delta$ of $\Gamma$ in the category $\OG^u_\Gamma.$ If $D$ is a generating interval of $G$, $(G, D)$ is an {\em ultrasimplicial $\Gamma$-group} if $(G, D)$ is a direct limit of a sequence of simplicial $\Gamma$-groups $(G_n, [0, u_n])$ with simplicial $\Gamma$-bases stabilized by the same subgroup $\Delta$ of $\Gamma$ in the category $\OG^D_\Gamma.$ In either one of these two cases, we say that $G$ is an {\em ultrasimplicial $\Gamma$-group.}

If all simplicial $\Gamma$-groups $G_n$ above are considered as $\Gamma$-$\Zset_2$-bigroups with the trivial $\Zset_2$-action, direct limits can be considered in the categories $\OG^{u*}_\Gamma$ or $\OG^{D*}_\Gamma$ and the group $G$ can also be considered as a $\Gamma$-$\Zset_2$-bigroup with the trivial action of $\Zset_2.$  
\label{definition_ultrasimplicial}
\end{definition}

The proposition below is the $\Gamma$-group version of \cite[Proposition 12.3]{Goodearl_Handelman}. We use this property to describe the Grothendieck $\Gamma$-group of a non-unital graded regular ring and to show Corollary \ref{dir_lim_corollary}. 
 
\begin{proposition} Let $\Gamma$ be a group with a subgroup $\Delta$ and let $(G, u)$ be an object of $\OG^u_\Gamma$ such that $\Delta\subseteq \Stab(\{u\})$ and such that $G$ is directed and has the interpolation property. If $f:(G, u)\to (\Zset[\Gamma/\Delta], \Delta)$ is a morphism of $\OG^u_\Gamma$ and $D=\{x\in \ker f\;|\; 0\leq x\leq u\},$ then the following holds. 
\begin{enumerate}
\item $\ker f$ is a directed and convex $\Zset[\Gamma]$-submodule of $G.$
 
\item $D$ is a generating interval in $(\ker f)^+.$

\item $G^+=\{ x+au \;|\; a\in \Zset^+[\Gamma], x\in \ker f, x+ad\geq 0$ for some $d\in D\}.$
\end{enumerate}
\label{description_of_kernel}
\end{proposition}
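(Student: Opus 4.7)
The plan is to extract a single \emph{canonical decomposition} of elements of $G^+$ from the interpolation hypothesis and to derive all three parts uniformly from it. The starting observation is that for any $v\in G$ with $0\le v\le u$, one has $f(v)\in\{0,\Delta\}$: both $f(v)$ and $\Delta-f(v)$ lie in $\Zset^+[\Gamma/\Delta]$, and writing $f(v)=\sum_k m_k(\gamma_k\Delta)$ with $m_k\ge 0$, the requirement that $\Delta-f(v)$ also have nonnegative coefficients forces $m_k=0$ whenever $\gamma_k\Delta\ne\Delta$ and $m_0\in\{0,1\}$. I would then apply the Riesz decomposition (equivalent to interpolation): given $a\in G^+$, the order-unit property furnishes $a'=\sum_{j=1}^n\gamma_j\in\Zset^+[\Gamma]$ with $a\le\sum_j\gamma_j u$, and iterated Riesz yields $a=\sum_{j=1}^n a_j$ with $0\le a_j\le\gamma_j u$. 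Setting $a_j':=\gamma_j^{-1}a_j\in[0,u]$, the observation gives $f(a_j')\in\{0,\Delta\}$, so partitioning into $J_0=\{j:f(a_j')=0\}$ and $J_1=\{j:f(a_j')=\Delta\}$ shows that $a_j'\in D$ for $j\in J_0$ and $d_j:=u-a_j'\in D$ for $j\in J_1$, producing
\[
a \;=\; \sum_{j\in J_0}\gamma_j a_j' \;+\; \alpha u \;-\; \sum_{j\in J_1}\gamma_j d_j, \qquad \alpha:=\sum_{j\in J_1}\gamma_j\in\Zset^+[\Gamma],
\]
with both the first and third summands in $(\ker f)^+$ and $\alpha\Delta=f(a)$. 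One further auxiliary point: because $\Delta\subseteq\Stab(\{u\})$, the element $\gamma u$ depends only on the coset $\gamma\Delta$, so whenever $c,c'\in\Zset[\Gamma]$ satisfy $c\Delta=c'\Delta$ one has $cu=c'u$.

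For (1), $\ker f$ is a $\Zset[\Gamma]$-submodule because $f$ is $\Zset[\Gamma]$-linear, and it is convex because the strict cone of $\Zset[\Gamma/\Delta]$ forces $0\le f(z)\le 0$ to imply $f(z)=0$. For directedness, given $z\in\ker f$, I would write $z=a-b$ with $a,b\in G^+$ (using directedness of $G$), apply the canonical decomposition to both to obtain $a=x_0^a+\alpha u-x_2^a$ and $b=x_0^b+\beta u-x_2^b$ with $x_0^a,x_2^a,x_0^b,x_2^b\in(\ker f)^+$, and note that $f(z)=0$ yields $\alpha\Delta=\beta\Delta$, hence $\alpha u=\beta u$ by the auxiliary point. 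Thus $z=(x_0^a+x_2^b)-(x_0^b+x_2^a)$ is the required difference of elements of $(\ker f)^+$.

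For (2), convexity of $D$ is immediate from that of $\ker f$ and of $[0,u]$. For directedness I would interpolate $\{x,y\}\le\{x+y,u\}$ in $G$, which is valid since $0\le x,y\le u$; the interpolant $z$ satisfies $x,y\le z\le x+y$ and $z\le u$, so $0=f(x)\le f(z)\le f(x+y)=0$ forces $f(z)=0$ by the strict cone and hence $z\in D$. For generating, given $w\in(\ker f)^+$, the Riesz decomposition $w=\sum_j w_j$ along $w\le\sum_j\gamma_j u$ satisfies $\sum_j f(w_j)=f(w)=0$ with each $f(w_j)\ge 0$, so the strict cone forces $f(w_j)=0$; then $w_j':=\gamma_j^{-1}w_j\in D$ and $w=\sum_j\gamma_j w_j'\in\Zset^+[\Gamma]D+\cdots+\Zset^+[\Gamma]D$.

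For (3), the inclusion $\supseteq$ is immediate: if $y=x+au$ with $x+ad\ge 0$ and $d\in D$, then $au-ad\ge 0$ since $d\le u$, giving $y=(x+ad)+(au-ad)\ge 0$. For $\subseteq$, given $y\in G^+$, apply the canonical decomposition to write $y=x_0+au-x_2$ with $a=\sum_{j\in J_1}\gamma_j$, $x_2=\sum_{j\in J_1}\gamma_j d_j$, and $x_0,x_2\in(\ker f)^+$; set $x:=x_0-x_2\in\ker f$ and, using directedness of $D$ from (2), pick $d\in D$ with $d\ge d_j$ for every $j\in J_1$. Then $x+ad=x_0+\sum_{j\in J_1}\gamma_j(d-d_j)\ge 0$, exhibiting $y$ in the right-hand set. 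The main technical obstacle is isolating the canonical decomposition and establishing $f([0,u])\subseteq\{0,\Delta\}$; once these are available, all three parts follow uniformly from Riesz decomposition, the strict cone of $\Zset[\Gamma/\Delta]$, and directedness of $D$.
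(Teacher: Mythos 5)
Your proof is correct. Parts (1) and (2) follow essentially the same route as the paper: Riesz-decompose along an order-unit bound $\sum_j\gamma_j u$, note that each piece $a_j'=\gamma_j^{-1}a_j$ lies in $[0,u]$ and hence maps to $0$ or $\Delta$ (the paper phrases this as $0\le f(y_i)\le\gamma_i\Delta$ forcing $f(y_i)\in\{0,\gamma_i\Delta\}$), rewrite each piece as a multiple of $u$ plus/minus kernel elements, and cancel the $u$-parts using $\Delta\subseteq\Stab(\{u\})$. Your packaging of this as a single ``canonical decomposition'' $a=\sum_{J_0}\gamma_ja_j'+\alpha u-\sum_{J_1}\gamma_jd_j$ with $f(a)=\alpha\Delta$, together with the clean observation that $\pi(c)=\pi(c')$ implies $cu=c'u$, replaces the paper's slightly awkward bijection $\sigma$ between the index sets $A$ and $B$ in part (1), but the content is the same.

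Where you genuinely diverge is part (3). The paper starts from $x=y-au$ with $a\Delta=f(y)$, invokes the directedness of $\ker f$ from part (1) to write $x=v-w$, bounds $w\le v+\sum_i\gamma_iu$, Riesz-decomposes $w$, and then runs a \emph{second} interpolation $\{\gamma_i^{-1}w_i\}\le\{u,\sum_i\gamma_i^{-1}w\}$ to manufacture a single $d\in D$ with $w\le v+ad$. You instead read $x=x_0-x_2$, $a=\alpha$, and the elements $d_j$ directly off the canonical decomposition of $y$ itself, and only need the directedness of $D$ (already established in part (2), applied to the finite set $\{d_j\}_{j\in J_1}$) to produce one dominating $d$; the degenerate case $J_1=\varnothing$ is covered by $d=0\in D$. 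This is a real simplification: it bypasses both the appeal to part (1) and the second interpolation step, at the cost only of having isolated the decomposition lemma up front. Both arguments are valid; yours makes the logical dependencies ($(2)\Rightarrow(3)$ rather than $(1)\Rightarrow(3)$) and the role of the hypothesis $\Delta\subseteq\Stab(\{u\})$ somewhat more transparent.
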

\begin{proof}
It is direct to show that $\ker f$ is a convex $\Zset[\Gamma]$-submodule of $G.$ To show that $\ker f$ is directed, it is sufficient to show that $\ker f=(\ker f)^+-(\ker f)^+$. To show this, we follow the ideas of the proof of \cite[Proposition 12.3]{Goodearl_Handelman}, adapting some arguments as needed. Let $x\in \ker f.$ Since $G$ is directed, 
we can write $x=y-z$ for $y,z\in G^+.$ For $y,$  $y\leq \sum_{i=1}^n \gamma_iu$ for some $n$ and $\gamma_i\in \Gamma, i=1,\ldots,n.$ By the interpolation property, there
are $y_i\in G^+, i=1,\ldots,n$ such that $y=\sum_{i=1}^n y_i$ and $0\leq y_i\leq \gamma_iu.$ Applying $f$ to this last inequality implies that $0\leq f(y_i)\leq \gamma_i\Delta.$ The last relation implies that $f(y_i)=0$ or $f(y_i)=\gamma_i\Delta$ for every $i=1,\ldots, n.$ 

If $f(y_i)=0,$ then $y_i\in (\ker f)^+$ and we can write $y_i=0+y_i-0$ which is in $\{0,1\}\gamma_iu+(\ker f)^+-(\ker f)^+.$ If $f(y_i)=\gamma_i\Delta,$ then $f(\gamma_iu-y_i)=0$ and $y_i\leq \gamma_iu$ so $y_i=\gamma_i u+0 -(\gamma_iu-y_i)$ which is also in $\{0,1\}\gamma_iu+(\ker f)^+-(\ker f)^+.$ In both cases, we have $y_i=p_i\gamma_iu+a_i-b_i$ for $p_i\in\{0, 1\}$ and $a_i, b_i\in (\ker f)^+.$ Let 
\[y=\sum_{i=1}^n p_i\gamma_iu +a-b\mbox{  for }a=\sum_{i=1}^na_i\in(\ker f)^+\mbox{ and }b=\sum_{i=1}^n b_i\in (\ker f)^+.\] Analogously, for $z\in G^+,$ we obtain that there are $m$, $p_j'\in \{0, 1\},$ $\gamma'_j\in \Gamma,$ for $j=1, \ldots, m,$ and $a', b'\in(\ker f)^+$ such that $z=\sum_{j=1}^m p_j'\gamma'_ju+a'-b'.$ 

Since $f(x)=0,$ we have that $\sum_{i=1}^n p_i\,\gamma_i\Delta=f(y)=f(z)=\sum_{j=1}^m p_j'\, \gamma'_j\Delta.$ Thus, there is a bijection $\sigma: A=\{i\in \{1, \ldots, n\}\, |\,p_i=1\}\to B=\{j\in \{1,\ldots, m\}\, |\, p_j'=1\}$ such that $p_i=1$ if and only if $p_{\sigma(i)}'=1$ and $\gamma_i\Delta=\gamma'_{\sigma(i)}\Delta$ for all $i\in A.$ So, $(\gamma_i-\gamma'_{\sigma(i)})u=0$ for all $i\in A$ by the assumption that $\Delta\subseteq \Stab(\{u\}).$ Since  either $p_i=0$ or $i\in A$
for every $i=1,\ldots, n,$ we have that $\sum_{i=1}^n p_i\gamma_iu-\sum_{j=1}^m p_j'\gamma'_ju=\sum_{i\in A} \gamma_iu-\sum_{i\in A} \gamma'_{\sigma(i)}u=0$ and so $x=y-z=(a+b')-(a'+b)$ is in $(\ker f)^+-(\ker f)^+$ which finishes the proof of (1). 

To show (2), it is direct to check that $D$ is convex. If $x,y\in D$ and $z$ is an interpolant for $\{x,y\}\leq \{x+y, u\},$ then $0\leq x\leq z\leq u.$ By $x\leq z\leq x+y,$ $f(z)=0$ and so $z\in D.$ If $x\in (\ker f)^+,$ let $n$ and $\gamma_1,\ldots, \gamma_n$ be such that $x\leq \sum_{i=1}^n\gamma_iu.$ Let elements $x_i\in G^+,
i=1,\ldots,n$ be such that $x=\sum_{i=1}^n x_i$ and $0\leq x_i\leq \gamma_iu$ and let $y_i=\gamma_i^{-1}x_i\in [0, u]$ for $i=1,\ldots, n$. The relations $0\leq x_i\leq x$ imply that $f(x_i)=0$ for all $i$ and so $f(y_i)=\gamma_i^{-1}f(x_i)=0$ for all $i$ as well. Thus $x=\sum_{i=1}^n\gamma_iy_i$ for $y_i\in D, i=1,\ldots, n.$

To show (3), let $C$ denote the set on the right-hand side of the equation in (3). Then $C\subseteq G^+$ since for $d\in D,$ $d\leq u$ and so $ad\leq au$ which implies $x+ad\leq x+au.$ Hence if $x+ad\geq 0,$ we have that $x+au\geq 0$. For the converse, let $y\in G^+$ and $f(y)=a\Delta$ for some $a=\sum_{i=1}^n\gamma_i\in \Zset^+[\Gamma].$ The element $x=y-au$ is in $\ker f$ by definition. Since $\ker f$ is directed, $x=v-w$ for some $v,w\in (\ker f)^+$ and, since $y\in G^+,$ $w\leq y+w=au+x+w=\sum_{i=1}^n\gamma_iu+v.$ By interpolation, there are $w_0\in G^+$ and $w_i\in G^+, i=1,\ldots,n$ such that $w=w_0+\sum_{i=1}^n w_i,$ $w_0\leq v$ and $w_i\leq \gamma_iu$ so that $0\leq \gamma_i^{-1}w_i\leq u.$ Let $d$ be an interpolant for $\{\gamma_1^{-1}w_1,\ldots, \gamma_n^{-1}w_n\}\leq \{u,\sum_{i=1}^n\gamma_i^{-1} w\}.$ The relations $0\leq \gamma_i^{-1}w_i\leq d\leq \sum_{i=1}^n\gamma_i^{-1} w$ for $i=1,\ldots, n,$ and the convexity of $\ker f$ imply that $d$ is in $\ker f.$ Since $0\leq \gamma_i^{-1}w_i\leq d\leq u,$ $d$ is in $D.$ Hence, we have that $w=w_0+\sum_{i=1}^n \gamma_i\gamma_i^{-1}w_i\leq v+\sum_{i=1}^n \gamma_i d= v+ad$ and so $x+ad=v-w+ad\geq 0.$ This implies that $y=x+au$ is in $C.$ 
\end{proof}

\subsection{The Grothendieck \texorpdfstring{$\Gamma$}{TEXT}-group of a non-unital graded ring and the functor \texorpdfstring{$K_0^{\Gamma}$}{TEXT}}

After a short review of Grothendieck $\Gamma$-groups of non-unital graded rings, we show that the functor $K_0^{\Gamma}$ preserves direct limits on appropriate categories. This implies Corollary \ref{dir_lim_corollary} stating that $K_0^{\Gamma}(R)$ is ultrasimplicial if $R$ is a graded ultramatricial ring over a graded division ring. 

Let $R$ be a ring, possibly non-unital. The Grothendieck group $K_0(R)$ can be defined using unitization of $R$ (see \cite{Goodearl_Handelman}, for example). If $R$ is also $\Gamma$-graded, the Grothendieck $\Gamma$-group of $R$ is defined analogously, see \cite[Section 3.5]{Roozbeh_book}. If $R$ is a graded $*$-ring, see \cite[Section 4]{Roozbeh_Lia}. We briefly review some relevant facts of these constructions. 

If $R$ is a $\Gamma$-graded ring, possibly non-unital, a unitization $R^u$ of $R$ can be defined as $R\oplus \Zset$ 
with the addition given component-wise, the multiplication given by
$(r,m)(s,n)=(rs+nr+ms, mn),$
for  $r,s \in R,$ $m,n\in \Zset,$ and the $\Gamma$-grading given by 
$R^u_0= R_0\times\Zset,$ $R^u_\gamma= R_\gamma\times \{0\},$ $\gamma\neq 0.$ 

The map $K_0^{\Gamma}(R^u) \rightarrow K_0^{\Gamma}(R^u/R)$ induced by the graded epimorphism $p_R:R^u\rightarrow R^u/R$ is a natural $\Gamma$-group homomorphism. The group $K_0^{\Gamma}(R)$ is defined as the kernel of this homomorphism. It inherits the  action of $\Gamma$ and the pre-order structure from $K_0^{\Gamma}(R^u).$ 
Let  
\[D_R=
\big \{x\in \ker\left( K_0^{\Gamma}(R^u)\to K_0^{\Gamma}(R^u/R)\right)\; | \;0\leq x\leq [R^u]\big\}.
\] 
If $R$ is unital, then the definition of its $K_0^{\Gamma}$-group coincides with the construction using the projective modules and the set $D_R$ is naturally isomorphic to $\{x\in K_0^{\Gamma}(R)\, |\, 0\leq x\leq [R]\}$ (the version of this statement for trivial $\Gamma$ is shown in \cite[Proposition 12.1]{Goodearl_Handelman}).

If $R$ and $S$ are two $\Gamma$-graded rings  (possibly non-unital) and $\phi: R\to S$ a graded ring homomorphism (possibly non-unital), then $K_0^{\Gamma}(\phi)$ is a morphism of $\POG_\Gamma$ such that $K_0^{\Gamma}(\phi)(D_R)\subseteq D_S.$  

Since we consider $\Zset$ trivially graded in the above construction, $K_0^\Gamma(\Zset)=\Zset[\Gamma]$ (see \cite[Example 3.1.5]{Roozbeh_book}) and $K_0^\Gamma(p_R): (K_0^\Gamma(R^u), [R^u])\to (\Zset[\Gamma], 1)$ is a morphism of $\POG^u_\Gamma.$ Clearly, $\{1\}\subseteq \Stab([R^u]).$ So, the assumptions of Proposition \ref{description_of_kernel} are satisfied provided that $K_0^\Gamma(R^u)$ is ordered and with interpolation. We consider the conditions on the graded ring $R$ which guarantee that. In that case, we can conclude that $D_R$ is a generating interval. 

If $R$ is a unital (von Neumann) regular ring (i.e. $x\in xRx$ for every $x$), then $K_0(R)$ has interpolation by \cite[Theorem 2.8]{Goodearl_book}. In this case, when $K_0(R)$ is also ordered (for example when $\M_n(R)$ is directly finite for every $n$), then $D_R$ is a generating interval. Recall that a $\Gamma$-graded ring $R$ is {\em graded regular} if $x\in xRx$ for every homogeneous element $x$ (see \cite[Section 1.1.9]{Roozbeh_book}). We note the graded version of \cite[Theorem 2.8]{Goodearl_book}: if $R$ is a unital $\Gamma$-graded regular ring, then $K_0^\Gamma(R)$ is a pre-ordered $\Gamma$-group with interpolation. This statement can be shown directly following the proof of 
\cite[Theorem 2.8]{Goodearl_book} and translating it to the graded ring setting. Further, \cite[Theorems 1.7 and 1.11]{Goodearl_book}, which the proof also uses, can be formulated and shown for graded regular rings as well. As a result, if $R$ is a graded matricial ring over a graded division ring $K,$ then $(K_0^\Gamma(R), [R])$ is an ordered and directed $\Gamma$-group with interpolation. The assumptions of Proposition \ref{generating_interval_of_simplicial} are satisfied and so $D_R,$  naturally isomorphic to $[0, [R]],$ is a generating interval. If $R$ and $S$ are two matricial $\Gamma$-graded rings and $\phi: R\to S$ a graded ring homomorphism (possibly non-unital), $K_0^{\Gamma}(\phi)$ is a morphism of $\OG_\Gamma^D.$  
   
If $R$ is a $\Gamma$-graded $*$-ring, then $R^u$ is also a graded $*$-ring with $(r,n)^*=(r^*, n)$ for $r\in R$ and $n\in \Zset,$ the map $p_R:R^u\rightarrow R^u/R$ is a $*$-homomorphism, and $K_0^\Gamma(p_R)$ is a morphism of $\POG^{u*}_\Gamma.$ The kernel of this map inherits the action of $\Zset_2$ from $K_0^{\Gamma}(R^u).$ 

Let $\GrRings$ denote the category of $\Gamma$-graded rings, let $\GrRings^*$ denote the category of $\Gamma$-graded $*$-rings, let $\GrURings$ denote the category of unital, $\Gamma$-graded rings, and let $\GrURings^*$ denote the category of unital, $\Gamma$-graded $*$-rings. 

\begin{proposition}
For any group $\Gamma,$ the functor  $K_0^{\Gamma}$ preserves direct limits on the following categories.
\begin{enumerate}
\item $K_0^{\Gamma}: \GrRings\to \POG_\Gamma,$ and $K_0^{\Gamma}: \GrURings\to \POG^u_\Gamma.$ 
\item $K_0^{\Gamma}: \GrRings^*\to \POG_\Gamma^*$ and $K_0^{\Gamma}: \GrURings^*\to \POG^{u*}_\Gamma.$
\end{enumerate}
\label{direct_limits_K0}  
\end{proposition}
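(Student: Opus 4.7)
The plan is to handle first the unital, non-$*$ case via the classical lifting arguments for idempotents applied to the homogeneous idempotent-matrix description of $\V^\Gamma$ recalled in \S\ref{subsection_Grothendieck}; then to deduce the unital $*$-case with little extra work; and finally to reduce the non-unital cases to the unital ones via the unitization construction together with exactness of direct limits.

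Fix a directed system $(R_i,\phi_{ij})$ in $\GrURings$ with graded direct limit $(R,\phi_i)$. The canonical $\Gamma$-monoid map $\Psi\colon\varinjlim\V^\Gamma(R_i)\to\V^\Gamma(R)$ will be shown to be an isomorphism. For surjectivity, given a homogeneous idempotent $p\in\M_n(R)(\overline\gamma)_1$, the finitely many homogeneous entries of $p$ all come from some stage $R_i$, giving a homogeneous lift $\tilde p\in\M_n(R_i)(\overline\gamma)_1$; the identity $\phi_i(\tilde p^{2}-\tilde p)=0$ in $R$ forces $\phi_{ij}(\tilde p^{2}-\tilde p)=0$ at some $j\ge i$, so $\phi_{ij}(\tilde p)$ is an actual idempotent whose class maps to $[p]$. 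For injectivity, two classes $[p],[q]$ at stage $i$ coincide in $\V^\Gamma(R)$ precisely when there exist homogeneous elements $u,v$ over $R$ of compatible matrix sizes and degrees with $\phi_i(p)=uv$ and $\phi_i(q)=vu$; lifting $u,v$ to some $R_j$ and then pushing forward far enough to enforce both equations identifies the classes at some $R_k$.

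Since group completion commutes with direct limits of abelian monoids, $\Psi$ induces a $\Zset[\Gamma]$-isomorphism $\varinjlim K_0^\Gamma(R_i)\to K_0^\Gamma(R)$. By Proposition \ref{direct_limits}, the positive cone of the left-hand side is $\bigcup_i \phi_i(\V^\Gamma(R_i))$, which under this map coincides with the image of $\V^\Gamma(R)$, i.e.\ the positive cone of the right-hand side. Because each $\phi_i$ is unit-preserving, $[R]=K_0^\Gamma(\phi_i)([R_i])$ for every $i$, so the order-unit is preserved and $K_0^\Gamma\colon\GrURings\to\POG^u_\Gamma$ preserves direct limits. If in addition the $R_i$ are graded $*$-rings and the $\phi_{ij}$ are graded $*$-homomorphisms, the $\Zset_2$-action $[p]\mapsto[p^*]$ on $\V^\Gamma$ is intertwined with $\Psi$ because the lifting and forward-pushing steps commute with $*$, yielding the claim for $\GrURings^*\to\POG^{u*}_\Gamma$.

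For the non-unital cases, the unitizations $(R_i^u,\phi_{ij}^u)$ form a directed system in $\GrURings$ with direct limit $R^u$, since the underlying group $R_i^u=R_i\oplus\Zset$, the multiplication formula, and the grading of the unitization are all natural in $R_i$. The projections $p_{R_i}\colon R_i^u\to\Zset$ assemble into a compatible family, so by the unital case we obtain an isomorphism $\varinjlim K_0^\Gamma(R_i^u)\cong K_0^\Gamma(R^u)$ in $\POG^u_\Gamma$ intertwining the induced maps to $K_0^\Gamma(\Zset)=\Zset[\Gamma]$. Because direct limits of abelian groups are exact, taking kernels gives $\varinjlim K_0^\Gamma(R_i)\cong K_0^\Gamma(R)$ as $\Zset[\Gamma]$-modules; the pre-order on each side is by definition the restriction from the unitization level, so this isomorphism already lies in $\POG_\Gamma$. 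The $*$-variant is identical, using that unitization is functorial on $\GrRings^*$ and that the $p_{R_i}$ are $*$-homomorphisms. The main technical obstacle is the graded lifting step for homogeneous idempotents and their equivalences; once it is in place, the rest is routine bookkeeping with cones, order-units, unitizations, and exactness of filtered colimits.
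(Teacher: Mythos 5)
Your argument is correct, and at its core it is the same as the paper's: the paper simply cites \cite[Theorem 3.2.4]{Roozbeh_book} for the key fact that the canonical map from the colimit of the $K_0^\Gamma(R_i)$ to $K_0^\Gamma(R)$ is injective with image containing $\V^\Gamma(R)$, whereas you prove that fact directly by lifting homogeneous idempotents and their graded equivalences to a finite stage — which is exactly the content behind the cited theorem. The one genuine organizational difference is the non-unital case: the paper applies the cited result to the non-unital system directly, while you reduce to the unital case via $\varinjlim R_i^u\cong R^u$ and exactness of filtered colimits of abelian groups; your route is more self-contained, but note that identifying the cone of $\varinjlim K_0^\Gamma(R_i)$ (the union of the images of the cones) with $K_0^\Gamma(R)\cap K_0^\Gamma(R^u)^+$ still requires the small finite-stage positivity argument supplied by the unital case, so it is slightly more than ``by definition'' — though it is indeed routine. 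Everything else (group completion commuting with filtered colimits, the order-unit $[R]=K_0^\Gamma(\phi_i)([R_i])$, and the $\Zset_2$-action commuting with the lifting steps) matches the paper's treatment.
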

\begin{proof}
To show (1), let $I$ be a directed set, let $(R_i, \phi_{ij})$ be a directed system in $\GrRings,$ let $R$ be its direct limit, and let $\phi_i$ be the translational maps. Let $G$ be a direct limit of $(K_0^{\Gamma}(R_i), K_0^{\Gamma}(\phi_{ij}))$ in the category $\POG_\Gamma$ which exists by Proposition \ref{direct_limits} and let $f_i$ denote the translational maps. If $f: G\to K_0^{\Gamma}(R)$ is a unique map in $\POG_\Gamma$ such that $ff_i=K_0^{\Gamma}(\phi_i),$ \cite[Theorem 3.2.4]{Roozbeh_book} shows that $f$ is injective and that 
$(K_0^{\Gamma}(R))^+\subseteq f(G^+).$ Since $K_0^{\Gamma}(R)$ is directed, this last relation implies that $f$ is surjective and that $f^{-1}$ is order-preserving. So, $f$ is an isomorphism of $\POG_\Gamma.$ 

If $R_i, i\in I,$ from above are unital, the maps $\phi_{ij}, i\leq j,$ are morphisms of $\GrURings$, and $R$ is a direct limit in $\GrURings$, let $G,$ $f_i,$ and $f$ be as in the previous paragraph. Since $K_0^{\Gamma}(\phi_{ij})([R_i])=[R_j],$ $f_i([R_i])=f_j([R_j])$ for any $i,j\in I.$ 
Let $u\in G^+$ be the element $u=f_i([R_i])$ for any $i.$ For $x\in G$ such that $x=f_i(x_i),$ let $a\in \Zset^+[\Gamma]$ be such that $x_i\leq a[R_i].$ Then $x\leq au$ so $u$ is an order-unit of $G.$ Since $f(u)=f(f_i([R_i]))=K_0^{\Gamma}(\phi_{i})([R_i])=[R],$ $f$ is an isomorphism of $\POG^u_\Gamma.$  

To show (2), let $(R_i, \phi_{ij})$ be a directed system in $\GrRings^*$ and let $G, f_i,$ and $f$ be as in the first paragraph of the proof. For $g\in G$ such that $g=f_i(x_i)$ we can define $g^*=f_i(x_i^*).$ The map $g\mapsto g^*$ is well-defined and it makes $f_i, i
\in I,$ into morphisms of $\POG_\Gamma^*.$ Then $f$ is also a morphism of $\POG^*_\Gamma$ since $f(g^*)=ff_i(x_i^*)=K_0^{\Gamma}(\phi_{i})(x_i^*)=K_0^{\Gamma}(\phi_{i})(x_i)^*=(ff_i(x_i))^*=f(g)^*.$ If, in addition,  $(R_i, \phi_{ij})$ is a directed system in $\GrURings^*,$ then $K_0^{\Gamma}(\phi_{ij})$ and $f_i$ are morphisms in $\POG^{u*}_\Gamma.$ The order-unit $u=f_i([R_i])$ in $G$ is such that $u^*=f_i([R_i])^*=f_i([R_i]^*)=f_i([R_i])=u$ so $(G, u)$ is an object in $\POG^{u*}_\Gamma.$ 
\end{proof}

We now establish an expected result: every $\Gamma$-graded ultramatricial ring over a graded division ring gives rise to an ultrasimplicial $\Gamma$-group. The main result of section \ref{section_limits}, Theorem \ref{realization_ultrasimplicial}, states the converse of this statement: every ultrasimplicial $\Gamma$-group can be realized by a $\Gamma$-graded ultramatricial ring over a graded division ring. 

\begin{corollary}
Let $K$ be a $\Gamma$-graded division ring. 
\begin{enumerate}
\item If $R$ is a unital graded ultramatricial ring over $K,$ then $(K_0^{\Gamma}(R), [R])$ is an ultrasimplicial $\Gamma$-group. 

\item If $R$ is a graded ultramatricial ring over $K,$ then $(K_0^{\Gamma}(R), D_R)$ is an ultrasimplicial $\Gamma$-group.   
\end{enumerate}
If $K$ is a $\Gamma$-graded division $*$-ring and $R$ is a direct limit in $\GrURings^*$ (respectively $\GrRings^*$), then the ultrasimplicial $\Gamma$-group from (1) (respectively (2)) is an object in $\OG^{u*}_\Gamma$ (respectively $\OG^{D*}_\Gamma$).
\label{dir_lim_corollary}
\end{corollary}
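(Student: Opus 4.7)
The plan is to combine Proposition \ref{simplicial_exists}, which produces simplicial $\Gamma$-groups from graded matricial rings over $K$, with Proposition \ref{direct_limits_K0}, which says $K_0^{\Gamma}$ preserves direct limits. For claim (1), I would write $R$ as a direct limit $\varinjlim R_n$ in $\GrURings$ of graded matricial rings $R_n$ over $K$ via unit-preserving connecting morphisms $\phi_{nm}\colon R_n\to R_m$. Proposition \ref{simplicial_exists} says each $K_0^{\Gamma}(R_n)$ is a simplicial $\Gamma$-group with a simplicial $\Gamma$-basis stabilized by the \emph{same} subgroup $\Delta=\Gamma_K$, and part (4) of Proposition \ref{properties_of_simplicial} gives $[R_n]$ as an order-unit, making $(K_0^{\Gamma}(R_n),[R_n])$ an object of $\OG^u_\Gamma$. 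By Proposition \ref{direct_limits_K0}, $(K_0^{\Gamma}(R),[R])$ is the direct limit of $(K_0^{\Gamma}(R_n),[R_n])$ in $\POG^u_\Gamma$, and because simplicial cones are strict (the remark after Definition \ref{simplicial_cone_definition}), the limit lives in $\OG^u_\Gamma$ by Proposition \ref{direct_limits}. This matches Definition \ref{definition_ultrasimplicial}.

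For claim (2), the connecting maps need not preserve units so we work in $\GrRings$. Each $R_n$ is itself unital graded matricial, so by the discussion just before Proposition \ref{direct_limits_K0}, $D_{R_n}$ is naturally identified with $[0,[R_n]]$, which by Proposition \ref{generating_interval_of_simplicial} is a generating interval of $K_0^{\Gamma}(R_n)$. The connecting morphisms $K_0^{\Gamma}(\phi_{nm})$ send $D_{R_n}$ into $D_{R_m}$, so we obtain a directed system $\bigl((K_0^{\Gamma}(R_n),D_{R_n}),K_0^{\Gamma}(\phi_{nm})\bigr)$ in $\OG^D_\Gamma$. Proposition \ref{direct_limits_K0} then gives $K_0^{\Gamma}(R)=\varinjlim K_0^{\Gamma}(R_n)$ in $\POG_\Gamma$, and to upgrade this to a direct limit in $\OG^D_\Gamma$ it suffices, by the construction of direct limits in Proposition \ref{direct_limits}, to show that $D_R$ equals the canonical generating interval $\bigcup_n K_0^{\Gamma}(\phi_n)(D_{R_n})$ of the limit.

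Verifying this last equality is the main obstacle. The idea is to use naturality of unitization: the system $R_n\to R$ induces a system $R_n^u\to R^u$ of graded unital rings, and the kernels of the projections $K_0^{\Gamma}(R_n^u)\to K_0^{\Gamma}(R_n^u/R_n)$ commute with direct limits in $\POG^u_\Gamma$ because both $K_0^{\Gamma}$ and the formation of kernels do, so in particular the relations $0\leq x\leq[R_n^u]$ that define $D_{R_n}$ lift through the translational maps to $0\leq \phi_n(x)\leq[R^u]$ defining elements of $D_R$, and conversely any element of $D_R$ is pulled back using the description of $G^+$ for a direct limit in Proposition \ref{direct_limits}. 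This gives the required identification.

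For the final sentence, suppose $K$ is a graded division $*$-ring and $R$ is a direct limit in $\GrURings^*$ or $\GrRings^*$. By the argument in the last paragraph of the proof of Theorem \ref{realization_simplicial}, each $K_0^{\Gamma}(R_n)$ carries the trivial $\Zset_2$-action, so the system sits in the starred category. Then Proposition \ref{direct_limits_K0}(2) together with Corollary \ref{direct_limits_star} places the resulting limit in $\OG^{u*}_\Gamma$ (respectively $\OG^{D*}_\Gamma$), and the induced $\Zset_2$-action on $K_0^{\Gamma}(R)$ is again trivial, as required.
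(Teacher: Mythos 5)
Your proposal is correct and follows essentially the same route as the paper's proof: decompose $R$ as a direct limit of graded matricial rings, observe that each $K_0^{\Gamma}(R_n)$ is simplicial with basis stabilized by $\Gamma_K$, invoke Propositions \ref{direct_limits_K0} and \ref{direct_limits}, and in the non-unital case identify $D_R$ with $\bigcup_n K_0^{\Gamma}(\phi_n)(D_{R_n})$ by passing to unitizations and using part (1) to witness the order relations $0\leq x\leq [R^u]$ at a finite stage. The only small nit is that Proposition \ref{simplicial_exists} is stated for the particular graded division ring $K[\Delta]$; for an arbitrary graded division ring $K$ the fact you need is the discussion in section \ref{Grothendieck_of_matricial} (which the paper cites via sections \ref{subsection_Morita} and \ref{subsection_division_ring}).
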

\begin{proof}
(1) Let $(R_n, \phi_{nm})$ be a directed system of graded matricial rings over $K$ with $R$ as their direct limit in $\GrURings.$ By sections \ref{subsection_Morita},  \ref{subsection_division_ring}, and \ref{Grothendieck_of_matricial}, 
$K_0^{\Gamma}(R_n)$ is a simplicial $\Gamma$-group for every $n$ with simplicial $\Gamma$-bases stabilized by $\Gamma_K$
and, by Proposition \ref{direct_limits_K0}, $(K_0^{\Gamma}(R), [R])$ is a direct limit of $\left((K_0^{\Gamma}(R_n), [R_n]), K_0^\Gamma(\phi_{nm})\right)$ in the category $\POG^u_\Gamma.$ Moreover, $((K_0^{\Gamma}(R_n), [R_n]), K_0^{\Gamma}(\phi_{nm}))$ is a directed system in $\OG^u_\Gamma$ so $(K_0^{\Gamma}(R), [R])$ is an object of $\OG^u_\Gamma.$ 

(2) Let $(R_n, \phi_{nm})$ be a directed system of graded matricial rings over $K$ with $R$ as their direct limit in $\GrRings,$ so that the maps $\phi_{nm}$ may possibly be non-unital. If $R^u_n, n\in \omega,$ and $R^u$ denote the unitizations of $R_n$ and $R$ respectively, then $K_0^\Gamma(R_n^u)$ and $K_0^{\Gamma}(R^u)$ have the interpolation property by Lemma \ref{lemma_simplicial_interpolation} and by part (1). Thus, $D_{R_n}$ and $D_R$ are generating intervals of $K_0^\Gamma(R_n)$ and $K_0^\Gamma(R)$ respectively by Proposition \ref{description_of_kernel}. 

To shorten the notation, we use $\ol \phi$ for $K_0^\Gamma(\phi)$ for any graded ring homomorphism $\phi.$ Since $\ol{\phi_n}(D_{R_n})\subseteq D_R$ we have that $\bigcup_{n\in\omega}\ol{\phi_n}(D_{R_n})\subseteq D_R.$ Let $G$ be a direct limit of $(K_0^{\Gamma}(R_n), \ol{\phi_{nm}})$ in the category $\POG_\Gamma,$ let $f_n:K_0^{\Gamma}(R_n)\to G$ be the translational maps and let $f: G\to K_0^{\Gamma}(R)$ be a unique morphism of $\POG_\Gamma$ such that $ff_n=\ol{\phi_n}.$ Then $f$ is an isomorphism by Proposition \ref{direct_limits_K0}. It is sufficient to show that $D=\bigcup_{n\in\omega} f_n(D_{R_n})$ is a generating interval of $G$ such that $f(D)=D_R.$ 

If $x\in G^+,$ then $x=f_n(x_n)$ for some $x_n\in K_0^{\Gamma}(R_n)^+.$ Since $D_{R_n}$ is a generating interval, $x_n$ is a sum of elements from $\Zset^+[\Gamma]D_{R_n}$ and so $x$ is a sum of elements from $\Zset^+[\Gamma]D$ by the definition of $D$. If $x,y\in D,$ we can find $n$ such that $x=f_n(x_n)$ and $y=f_n(y_n)$ for some $x_n, y_n\in D_n.$ The set $D_{R_n}$ is upwards directed so there is $z_n\in D_{R_n}$ $z_n\geq x_n, z_n\geq y_n,$ and so $f_n(z_n)\geq x$ and $f_n(z_n)\geq y,$ showing that $D$ is upwards directed. If $x,y\in D$ and $x\leq z\leq y$ for some $z\in G,$ we can find $n$ such that $x=f_n(x_n), y=f_n(y_n),$ and $z=f_n(z_n)$ for some $x_n, y_n\in D_{R_n}$ and $z_n\in K_0^\Gamma(R_n).$ The order $f_n(x_n)\leq f_n(z_n)\leq f_n(y_n)$ implies that there is $m\geq n$ such that $\ol{\phi_{nm}}(x_n)\leq \ol{\phi_{nm}}(z_n)\leq \ol{\phi_{nm}}(y_n).$ Since $D_{R_m}$ is convex, $\ol{\phi_{nm}}(z_n)\in D_{R_m}$ and so $z=f_n(z_n)=f_m(\ol{\phi_{nm}}(z_n))\in f_m(D_{R_m})\subseteq D,$ showing that $D$ is convex. 

As $D=\bigcup_{n\in\omega} f_n(D_{R_n}),$ $f(D)=\bigcup_{n\in\omega} ff_n(D_{R_n})=\bigcup_{n\in\omega}\ol{\phi_n}(D_{R_n})\subseteq D_R.$
It remains to show that $D_R\subseteq f(D).$ Let $\phi_{nm}^u: R_n^u\to R^u_m, n\leq m$ and $\phi_{n}^u: R_n^u\to R^u$ denote the extensions of $\phi_{nm}$ and  $\phi_{n}$ respectively and let $\iota_n: R_n\to R^u_n$ and $\iota: R\to R^u$ denote the natural injections. 
Let $x\in D_R.$ Since $f$ is onto, $x=ff_n(x_n)=\ol{\phi_n}(x_n)$ for some $n$ and $x_n\in K_0^\Gamma(R_n)^+.$ By the definition of $D_R,$ we have that $0\leq \ol\iota(x)\leq [R^u]$ and that $\ol\iota(x)=\ol\iota\ol{\phi_n}(x_n)=\ol{\phi_n^u}\ol{\iota_n}(x_n).$ The relation $0\leq \ol{\phi_n^u}\ol{\iota_n}(x_n)\leq \ol{\phi^u_n}([R^u_n])$ and part (1) imply that there is $m\geq n$ such that $0\leq \ol{\phi_{nm}^u}\ol{\iota_n}(x_n)=\ol{\iota_m}\ol{\phi_{nm}}(x_n)\leq \ol{\phi^u_{nm}}([R^u_n])=[R^u_m].$ Thus, $x_m=\ol{\phi_{nm}}(x_n)$ is in $D_{R_m}$ and $x=ff_n(x_n)=ff_m\ol{\phi_{nm}}(x_n)=ff_m(x_m)\in ff_m(D_{R_m})\subseteq f(D).$

If $K$ is also a $\Gamma$-graded division $*$-ring, $R_n$ and $R$ are graded $*$-rings and their Grothendieck $\Gamma$-groups are $\Gamma$-$\Zset_2$ bigroups with the trivial $\Zset_2$-action. Thus, the connecting and translational maps $\ol{\phi_{nm}}$ and $\ol{\phi_n}$ are maps in the categories $\POG^{u*}_\Gamma$ or $\POG^{D*}_\Gamma.$
\end{proof}

\subsection{Fullness of \texorpdfstring{$K_0^{\Gamma}$}{TEXT} and realization of ultrasimplicial \texorpdfstring{$\Gamma$}{TEXT}-groups}

Let $K$ be a $\Gamma$-graded division ring and let $\GrMat_K$ denote the category whose objects are graded matricial rings over $K$ and whose morphisms are graded ring homomorphisms. We show that the functor $K_0^{\Gamma}: \GrMat_K\to \OG^D_\Gamma$ is full (Proposition \ref{fullness}). Using this result, we show the main result of this section, Theorem \ref{realization_ultrasimplicial}. 

\begin{proposition} (Fullness)
Let $K$ be a $\Gamma$-graded division ring, let $R$ and $S$ be graded matricial rings over $K$ and let $f:(K_0^{\Gamma}(R), D_R)\rightarrow (K_0^{\Gamma}(S), D_S)$ a morphism of $\OG^D_\Gamma.$ There is a graded ring homomorphism $\phi:R\rightarrow S$ such that $K_0^{\Gamma}(\phi)=f$. Furthermore, if $f$ is order-unit-preserving (i.e. a morphism of $\OG^u_\Gamma$), then $\phi$ can be chosen to be unit-preserving. 
\label{fullness}
\end{proposition}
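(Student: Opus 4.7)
The plan is to adapt the classical proof of fullness for matricial rings (cf.~\cite[Lemma~15.23]{Goodearl_book}) to the graded setting, with careful attention to the degree conventions from \cite{NvO_book}. Decompose $R=\bigoplus_{i=1}^n R_i$ with $R_i=\M_{p(i)}(K)(\bar\gamma_i)$, let $\{e^i_{jk}\}$ be the standard graded matrix units of $R_i$ (so $\deg e^i_{jk}=\gamma_{ij}^{-1}\gamma_{ik}$), and aim to build pairwise orthogonal homogeneous matrix units $\tilde e^i_{jk}\in S$ of the same degrees, from which $\phi$ will be read off.

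The first step is to realise the classes $x^i_k:=f([e^i_{kk}R])$ as graded direct summands of $S$. Since $f$ preserves the generating interval, $x^i_k\in[0,[S]]$; and since $S$ is graded matricial over $K$, Lemma~\ref{iso_carries_from_K0} lets us choose pairwise orthogonal homogeneous degree-$1$ idempotents $q^i_k\in S$ with $[q^i_kS]=x^i_k$ and $\sum_{i,k}q^i_k\le 1_S$, the inequality coming from $f([R])\le[S]$. The second step constructs the off-diagonal entries. The identity $[e^i_{jj}R]=\gamma_{ij}^{-1}\gamma_{ik}[e^i_{kk}R]$ from subsection~\ref{Grothendieck_of_matricial}, combined with the $\Gamma$-action on right modules from section~\ref{subsection_Grothendieck}, yields (via Lemma~\ref{iso_carries_from_K0}) a graded right $S$-module isomorphism $q^i_kS\cong_{\gr}(\gamma_{ij}^{-1}\gamma_{ik})q^i_jS$. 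Any such isomorphism is implemented by left multiplication by a homogeneous element $\tilde e^i_{jk}\in q^i_jSq^i_k$ of degree $\gamma_{ij}^{-1}\gamma_{ik}$ with two-sided inverse $\tilde e^i_{kj}$. Fixing mutually inverse pairs $\tilde e^i_{1k},\tilde e^i_{k1}$ and setting $\tilde e^i_{jk}=\tilde e^i_{j1}\tilde e^i_{1k}$ (with $\tilde e^i_{kk}=q^i_k$) gives a full system of matrix units in $S$ with the prescribed degrees, matrix unit relations, and automatic orthogonality across different components $i$.

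The third step defines $\phi$ itself. I would set $\phi(e^i_{jk})=\tilde e^i_{jk}$ on matrix units and extend to the scalar copy of $K$ in each $R_i$ by using the graded Morita identification $q^i_1S\cong_{\gr} e^i_{11}R\otimes_RS$: the canonical graded ring isomorphism $e^i_{11}R_ie^i_{11}\cong_{\gr}K$ (up to the conjugation-by-$\gamma_{i1}$ grading shift) transports through Morita to a graded ring map $K\to q^i_1Sq^i_1$, which together with the matrix units uniquely specifies $\phi|_{R_i}$. Since the $[e^i_{kk}R]$ generate $K_0^{\Gamma}(R)$ as a $\Zset[\Gamma]$-module, $K_0^{\Gamma}(\phi)=f$ follows at once. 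When $f$ is order-unit-preserving we instead have $\sum_{i,k}x^i_k=[S]$, which lets us arrange $\sum_{i,k}q^i_k=1_S$, so $\phi$ becomes unit-preserving.

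The main obstacle is Step~3. Because $\Gamma$ is allowed to be non-abelian and $K$ non-commutative, the naive inclusion $k\mapsto kI_{p(i)}$ of $K$ into $R_i$ is graded only after conjugation by $\gamma_{i1}$, so reconciling this shift with the matrix units and verifying that the resulting ring homomorphism is indeed $\Gamma$-graded in the sense of \cite{NvO_book} requires careful bookkeeping with the degree conventions.
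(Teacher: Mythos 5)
Your overall strategy coincides with the paper's: realize the classes $f([e^i_{kk}R])$ by orthogonal homogeneous idempotents of $S$ via Lemma \ref{iso_carries_from_K0}, assemble a system of homogeneous matrix units of the correct degrees $\gamma_{ij}^{-1}\gamma_{ik}$, and read off $\phi$; your degree bookkeeping for the off-diagonal units is consistent with the conventions of \cite{NvO_book}. The one place you genuinely diverge is the non-unit-preserving case: you pad the family $\{[P^i_k]\}$ with a complement representing $[S]-f([R])\geq 0$ and run the same construction with $\sum q^i_k\leq 1_S$, whereas the paper first proves the order-unit-preserving case, then extends $f$ to the unitizations by $[R^u]\mapsto[S^u]$, produces $\phi^u\colon R^u\to S^u$, and uses graded regularity of $S^u$ to check that $\phi^u$ restricts to a map $R\to S$. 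Your route is more direct and avoids the graded-regularity argument; the paper's route lets it quote the unital case verbatim. Both are valid.

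The weak point is your Step 3, and the specific justification you give there does not work as written: the identification $q^i_1S\cong_{\gr}e^i_{11}R\otimes_RS$ is circular, since $S$ carries no $R$-module structure until $\phi$ has been constructed. What you actually need is a graded ring homomorphism from $e^i_{11}Re^i_{11}$ (a copy of $K$ with grading conjugated by $\gamma_{i1}$) into $q^i_1Sq^i_1\cong_{\gr}\End_S(q^i_1S)$, obtained by decomposing $q^i_1S$ into shifted simple column modules of $S$ and tracking how the shifts conjugate the grading of $K$ inside the endomorphism ring. This is exactly what the paper's chain $g_iSg_i\cong_{\gr}\End_S\bigl(\bigoplus_{k}(\gamma_{ik})Q_i\bigr)\cong_{\gr}\M_{p(i)}(\End_S(Q_i))(\gamma_{i1},\ldots,\gamma_{ip(i)})$ packages, with the required copy of $K$ sitting inside $\End_S(Q_i)$. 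So the idea you are reaching for is the right one, and you correctly identify it as the delicate point for non-abelian $\Gamma$ and non-commutative $K$, but you should replace the tensor-product identification by the endomorphism-ring argument to close the proof.
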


Proposition \ref{fullness} is an analogue of \cite[Theorem 5.1.3 (2)]{Roozbeh_book} (also \cite[Theorem 2.13 (2)]{Roozbeh_Annalen}) 
stating that if $\Gamma$ is abelian, if $K$ is commutative, and if $f$ is order-unit-preserving, then the graded ring homomorphism $\phi$ can also be required to be a graded $K$-algebra homomorphism. In the case when $f$ is order-unit-preserving, Proposition \ref{fullness} can be shown analogously to \cite[Theorem 5.1.3 (2)]{Roozbeh_book}, which is shown following the steps of the classic proof \cite[Lemma 15.23 (a)]{Goodearl_book}. The proof of \cite[Theorem 12.5]{Goodearl_Handelman} extends the classic proof to the case when $f$ is not necessarily order-unit-preserving and we adapt these arguments in our proof.   

\begin{proof}
Let us first assume that $f$ is order-unit-preserving $f:(K_0^{\Gamma}(R), [R])\rightarrow (K_0^{\Gamma}(S), [S])$.  Let 
\[R=\bigoplus_{i=1}^n R_i, \mbox{ for }R_i=\M_{p(i)}(K)(\gamma_{i1},\dots,\gamma_{ip(i)})\]
and let $\pi_i: R\to R_i$ and $\iota_i: R_i\to R$ denote the canonical projections and injections for $i=1,\ldots, n.$ We also use $e^i_{kl}$ to denote the element of $R$ such that $\pi_i(e^i_{kl})$ is the $(k,l)$-th standard graded matrix unit of $R_i$ as before. 

Since  $f$ is order-preserving, there is a finitely generated graded right projective $S$-module $P_i$ such that $[P_i]=f([\iota_i(R_i)]).$ Then  $\bigoplus_{i=1}^n P_i\cong_{\gr} S$ by Lemma \ref{iso_carries_from_K0} since
\[[\bigoplus_{i=1}^n P_i]=\sum_{i=1}^n[P_i]=\sum_{i=1}^n f\left([\iota_i(R_i)]\right)=f([\bigoplus_{i=1}^n \iota_i(R_i)])=f\left([R]\right)=[S].\] Let $g_i$ be homogeneous orthogonal idempotents in $S$ with $\sum_{i=1}^n g_i=1$ such that $g_iS\cong_{\gr} P_i.$  

Consider the modules $(\gamma_{i1}^{-1})e^i_{11}R$ and the finitely generated graded projective $S$-modules $Q_i$ such that $[Q_i]=f([(\gamma_{i1}^{-1})e^i_{11}R]).$ Since we have that $[\iota_i(R_i)]=\sum_{k=1}^{p(i)}\gamma_{ik}^{-1}\gamma_{i1}[e^i_{11}R]$ 
(see section \ref{subsection_Morita}), we have that 
\[[g_iS]=[P_i]=f([\iota_i(R_i)])=\sum_{k=1}^{p(i)}\gamma_{ik}^{-1}\gamma_{i1}f([e^i_{11}R])=\]
\[=\sum_{k=1}^{p(i)}\gamma_{ik}^{-1}f([(\gamma_{i1}^{-1})e^i_{11}R])=\sum_{k=1}^{p(i)}\gamma_{ik}^{-1}[Q_i]=[\bigoplus_{k=1}^{p(i)}(\gamma_{ik})Q_i]\]
which implies that $g_iS\cong_{\gr}\bigoplus_{k=1}^{p(i)}(\gamma_{ik})Q_i$ again by Lemma \ref{iso_carries_from_K0}. 

Let $g^i_{kl}$ be the elements of the first ring below corresponding to the standard graded matrix units of the last ring in the chain of the isomorphisms below.
\[g_iSg_i\cong_{\gr}\End_S(g_iS)\cong_{\gr}\End_S(\bigoplus_{k=1}^{p(i)}(\gamma_{ik})Q_i)\cong_{\gr} \M_{p(i)}(\End_{S}(Q_i))(\gamma_{i1}, \ldots,\gamma_{ip(i)})\] We have that $g_{kk}^i$ are homogeneous orthogonal idempotents with $\sum_{k=1}^{p(i)}g_{kk}^i=g_i$ and $g^i_{11}S\cong_{\gr}(\gamma_{i1})Q_i$ so that 
\[[g_{11}^iS]=[(\gamma_{i1})Q_i]=\gamma_{i1}^{-1}[Q_i]=\gamma_{i1}^{-1}f([(\gamma_{i1}^{-1})e^i_{11}R])=\gamma_{i1}^{-1}\gamma_{i1}f([e^i_{11}R])=f([e^i_{11}R]).\] Thus, the correspondence $e_{kl}^i\mapsto g_{kl}^i$ extends to a graded ring homomorphism $\phi: R\to S.$ We have that $K_0^\Gamma(\phi)=f$
since 
\[K_0^\Gamma(\phi)([e_{11}^iR])=[\phi(e_{11}^i)S]=[g_{11}^iS]=f([e^i_{11}R])\]
and $[e_{11}^iR], i=1,\ldots,n,$ generate $K_0^\Gamma(R).$

If $f$ is a morphism of $\OG^{D}_\Gamma$ not necessarily of $\OG^u_\Gamma,$ it extends to a morphism $f^u:K^\Gamma_0(R^u)\to K_0^\Gamma(S^u)$ of  $\OG^u_\Gamma$ by $[R^u]\mapsto [S^u]$. By the previous case, there is a graded ring homomorphism $\phi^u: R^u\to S^u$ such that $K_0^\Gamma(\phi^u)=f^u.$ 

If $p_R: R^u\to R^u/R$ and $p_S: S^u\to S^u/S$ denote the natural projections, we claim that $\phi^u$ maps $\ker p_R$ into $\ker p_S.$ If $r\in \ker p_R$ is a homogeneous element of $R^u,$ then $K_0^\Gamma(p_R)([rR^u])=0.$ By the definition of $f^u,$ $f^u(\ker K_0^\Gamma(p_R))\subseteq \ker K_0^\Gamma(p_S),$ and so $0=K_0^\Gamma(p_S)f^u([rR^u])=K_0^\Gamma(p_S)([\phi^u(r)S^u])=[p_S\phi^u(r)S^u],$ which implies that $p_S\phi^u(r)S^u=0$ by Lemma \ref{iso_carries_from_K0}. 
Every finitely generated graded right ideal of a graded regular ring is generated by a homogeneous idempotent (see \cite[Proposition 1.1.32]{Roozbeh_book}). So, since $S^u$ is graded regular, $xS^u=0$ implies that $x=0$ for every homogeneous element $x\in S^u.$ Hence, the condition $p_S\phi^u(r)S^u=0$ implies that $p_S\phi^u(r)=0.$ 

If $r$ is any element of $\ker p_R,$ then any homogeneous component $r_\gamma$ of $r$ is also in $\ker p_R$ since $p_R$ is a graded map. By the case when $r$ is homogeneous, $\phi^u(r_\gamma)\in \ker p_S$ for every component $r_\gamma$ and so  $\phi^u(r)\in \ker p_S.$ This proves the claim. By the claim, we can define $\phi:R\to S$ as the restriction of $\phi^u$ on the kernel of $p_R.$      
\end{proof}

We now use Proposition \ref{fullness} to prove the main result of this section. 

\begin{theorem} (Ultrasimplicial Realization)
Let $\Gamma$ be a group, let $\Delta$ be a subgroup of $\Gamma,$ and let $(G_n, u_n)$ be a sequence of simplicial $\Gamma$-groups with simplicial $\Gamma$-bases stabilized by $\Delta.$ 
\begin{enumerate}
\item If $(G,u)$ is a direct limit of $(G_n, u_n)$ in $\OG^u_\Gamma$, there is a graded division ring $K$ with $\Gamma_K=\Delta,$ a unital $\Gamma$-graded ultramatricial ring $R$ over $K,$ and an isomorphism $f$ of $\OG^u_\Gamma$ such that 
\[f:(G, u)\cong (K_0^{\Gamma}(R), [R]).\]
 
\item If $(G,D)$ is a direct limit of $(G_n, [0, u_n])$ in $\OG^D_\Gamma$, there is a graded division ring $K$ with $\Gamma_K=\Delta,$ a $\Gamma$-graded ultramatricial ring $R$ over $K,$ and an isomorphism $f$ of $\OG^D_\Gamma$ such that 
\[f:(G, D)\cong (K_0^{\Gamma}(R), D_R).\]
\end{enumerate}
\label{realization_ultrasimplicial}
\end{theorem}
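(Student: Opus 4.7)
The plan is to build the ultramatricial ring $R$ by realizing each simplicial $\Gamma$-group $G_n$ as a matricial ring and then assembling these into a directed system whose direct limit realizes $G$. First, fix any field $F$ (equipped with an involution, for example the identity, if the $\Zset_2$-case is needed) and set $K=F[\Delta]$; then $K$ is a $\Gamma$-graded division ring with $\Gamma_K=\Delta.$ For each $n,$ I would apply Theorem \ref{realization_simplicial} to obtain a graded matricial ring $R_n$ over $K$ together with an isomorphism $h_n\colon(K_0^{\Gamma}(R_n),[R_n])\cong(G_n,u_n)$ in $\OG^u_\Gamma.$ Inspection of the construction in that theorem shows that the same $K$ works for every $n$ precisely because all simplicial $\Gamma$-bases are stabilized by the single subgroup $\Delta.$ In the setting of part (2), Lemma \ref{lemma_simplicial_interpolation} guarantees that $K_0^{\Gamma}(R_n)$ has the interpolation property, and the identification of $D_{R_n}$ with $[0,[R_n]]$ from section \ref{Grothendieck_of_matricial} shows that $h_n$ also carries $D_{R_n}$ to $[0,u_n],$ so $h_n$ is simultaneously a morphism of $\OG^D_\Gamma.$

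Next, I would lift the given connecting maps $g_{nm}\colon G_n\to G_m$ inductively. Set $f_{n,n+1}=h_{n+1}^{-1}\circ g_{n,n+1}\circ h_n,$ a morphism in $\OG^u_\Gamma$ in part (1) and in $\OG^D_\Gamma$ in part (2). Proposition \ref{fullness} (in its unit-preserving form for part (1) and in its general form for part (2)) produces a graded ring homomorphism $\phi_{n,n+1}\colon R_n\to R_{n+1}$ with $K_0^{\Gamma}(\phi_{n,n+1})=f_{n,n+1}.$ For $n<m,$ \emph{define} $\phi_{nm}=\phi_{m-1,m}\circ\cdots\circ\phi_{n,n+1}$; functoriality of $K_0^{\Gamma}$ together with the coherence relation $g_{mp}g_{nm}=g_{np}$ then forces $K_0^{\Gamma}(\phi_{nm})=h_m^{-1}\circ g_{nm}\circ h_n.$ Let $R$ be the direct limit of $(R_n,\phi_{nm})$ in $\GrURings$ for part (1) and in $\GrRings$ for part (2); by construction $R$ is a (respectively unital) graded ultramatricial ring over $K.$

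Finally, I would invoke Proposition \ref{direct_limits_K0} for part (1) and Corollary \ref{dir_lim_corollary}(2) for part (2) to conclude that $(K_0^{\Gamma}(R),[R])$ is the direct limit of $(K_0^{\Gamma}(R_n),[R_n])$ in $\OG^u_\Gamma,$ respectively that $(K_0^{\Gamma}(R),D_R)$ is the direct limit of $(K_0^{\Gamma}(R_n),D_{R_n})$ in $\OG^D_\Gamma.$ Since the family $\{h_n\}$ is compatible with both sides by construction, the universal property of direct limits produces the desired isomorphism $f.$ The main obstacle is the coherence of the lifts: Proposition \ref{fullness} realizes a single $K_0^{\Gamma}$-morphism by a ring map without any uniqueness claim, so independent lifts of all the $g_{nm}$ need not compose correctly on the nose. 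Defining $\phi_{nm}$ as the composition of consecutive arrows $\phi_{n,n+1}$ finesses this, reducing the problem to lifting a single arrow at each inductive stage. A secondary point specific to part (2) is that the $\phi_{n,n+1}$ need not be unit-preserving, so the direct limit must be formed in $\GrRings$ and the generating-interval identification uses the non-unital half of Corollary \ref{dir_lim_corollary}.
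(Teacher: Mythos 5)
Your proposal is correct and follows essentially the same route as the paper: fix $K=F[\Delta]$, realize each $G_n$ by a graded matricial ring via Theorem \ref{realization_simplicial}, lift only the consecutive connecting maps through Proposition \ref{fullness} and define the remaining $\phi_{nm}$ as composites to ensure coherence, then pass to the direct limit and identify $K_0^{\Gamma}$ of the limit using Proposition \ref{direct_limits_K0} (and, for the generating-interval statement, the computation underlying Corollary \ref{dir_lim_corollary}(2)). The only differences are cosmetic, such as the direction in which you orient the isomorphisms $h_n$.
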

\begin{proof}
The proof of (1) follows the steps of \cite[Theorem 15.24 (b)
]{Goodearl_book}. Let $g_{nm}$ be the connecting maps and let $g_n$ be the translational maps. We use induction to produce graded matricial rings $R_n,$ unit-preserving graded ring homomorphisms $\phi_{nm}: R_n\to R_m$ for every $n<m,$ and isomorphisms $f_n: (G_n, u_n)\to (K_0^{\Gamma}(R_n), [R_n])$ of $\OG^u_\Gamma$ such that $\phi_{mk}\phi_{nm}=\phi_{nk}$ for every $n<m<k,$ and $f_mg_{nm}=K_0^{\Gamma}(\phi_{nm})f_n$ for every $n<m.$

If $F$ is any field, consider it trivially graded by $\Gamma$ and let $K=F[\Delta]$ with the grading as in section \ref{Grothendieck_of_matricial} so that $\Gamma_K=\Delta.$
Use Theorem  \ref{realization_simplicial} to produce a graded  matricial ring $R_1$ over $K$ and an isomorphism  $f_1:(G_1, u_1)\to(K_0^{\Gamma}(R_1), [R_1])$ of  $\OG_\Gamma^u.$  

Assuming the induction hypothesis for $n,$ use Theorem  \ref{realization_simplicial} again to produce a graded matricial ring $R_{n+1}$ and an isomorphism $f_{n+1}: (G_{n+1}, u_{n+1})\to (K_0^{\Gamma}(R_{n+1}), [R_{n+1}])$ of $\OG_\Gamma^u.$ Then use Proposition \ref{fullness} to produce a unit-preserving graded ring map $\phi_{n(n+1)}: R_n\to R_{n+1}$ such that $K_0^{\Gamma}(\phi_{n(n+1)})=f_{n+1}g_{n(n+1)}f_{n}^{-1}.$ For any $k<n,$ define $\phi_{k(n+1)}$ as $\phi_{n(n+1)}\phi_{kn}.$
The construction and the induction hypothesis imply the conditions  $\phi_{m(n+1)}\phi_{km}=\phi_{k(n+1)}$ for every $k<m<n+1,$ and $f_{n+1}g_{m(n+1)}=K_0^{\Gamma}(\phi_{m(n+1)})f_m$ for every $m<n+1.$

Let $R$ be a direct limit of $(R_n, \phi_{nm})$ and let $\phi_n$ denote the translational maps. By construction, $R$ is a unital graded ultramatricial ring with unit $1_R=\phi_n(1_{R_n})$ for any $n.$ The map $f: (G, u)\to (K_0^{\Gamma}(R), [R])$ given by $f(g_n(x_n))=K_0^{\Gamma}(\phi_n)(f_n(x_n))$
is an isomorphism of $\OG_\Gamma^u.$

To prove (2), if $g_{nm}$ are the connecting maps, we have that $g_{nm}(u_n)\leq u_m$. Produce $R_n$ and $\phi_{n(n+1)}$ in the same way as in the proof of part (1) using Proposition \ref{fullness}. The maps $f_n$ are isomorphisms in $\OG^u_\Gamma$ but $g_{nm}$ may not be order-unit-preserving, and so $\phi_{nm}$ may fail to be unit-preserving. Let $R$ be a direct limit of $(R_n,\phi_{nm}).$ Then $f(D)=\bigcup_{n\in \omega} fg_n([0, u_n])=\bigcup_{n\in \omega}K_0^\Gamma(\phi_n)f_n([0, u_n])=\bigcup_{n\in \omega}K_0^\Gamma(\phi_n)(D_{R_n})=D_R$ so 
$f:(G, D)\to (K_0^\Gamma(R), D_R)$ is an isomorphism of $\OG_\Gamma^D.$
\end{proof}

\subsection{The bigroup case.}
If $\Gamma$ is abelian, \cite[Theorem 2.7]{Roozbeh_Lia}, stated below, can be used for the involutive version of Theorem \ref{realization_ultrasimplicial}. 

\begin{proposition} \cite[Theorem 2.7]{Roozbeh_Lia} ($*$-Fullness)
Let $\Gamma$ be an abelian group, let $K$ be a $\Gamma$-graded $*$-field such that every nonzero graded component contains a unitary element (invertible element $x$ with $x^{-1}=x^*$), let $R$ and $S$ be graded matricial $*$-algebras over $K$ and let $f:(K_0^{\Gamma}(R), D_R)\rightarrow (K_0^{\Gamma}(S), D_S)$ a morphism of $\OG^D_\Gamma.$ There is a graded $*$-algebra homomorphism $\phi:R\rightarrow S$ such that $K_0^{\Gamma}(\phi)=f$. If $f$ is order-unit-preserving, then $\phi$ can be chosen to be unit-preserving.  
\label{fullness_involution}
\end{proposition}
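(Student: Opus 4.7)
My plan is to adapt the proof of Proposition \ref{fullness} to the involutive setting, following the strategy of \cite[Lemma 15.23]{Goodearl_book} but paying careful attention to replace homogeneous idempotents with homogeneous projections (self-adjoint idempotents) throughout. First I would reduce to the order-unit-preserving case by unitizing: given a morphism $f$ of $\OG^D_\Gamma$, extend it to a morphism $f^u$ of $\OG^u_\Gamma$ between $K_0^\Gamma(R^u)$ and $K_0^\Gamma(S^u)$ sending $[R^u]\mapsto [S^u]$, construct a unital graded $*$-algebra homomorphism $\phi^u\colon R^u\to S^u$ realizing $f^u$, and then argue as in the final paragraphs of the proof of Proposition \ref{fullness} that $\phi^u$ carries $R$ into $S$. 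The bookkeeping for the unitization passage is identical to the non-involutive case provided each $R^u_n, S^u_n$ is still graded $*$-regular, which it is.

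For the order-unit-preserving case, write $R=\bigoplus_{i=1}^n \M_{p(i)}(K)(\ol\gamma_i)$ with canonical matrix units $e^i_{kl}$ satisfying $(e^i_{kl})^*=e^i_{lk}$. Using that $S$ is a graded $*$-regular ring in which every finitely generated graded right ideal is generated by a homogeneous projection (this is where the hypothesis that every nonzero graded component of $K$ contains a unitary is needed, via the graded analogue of the polar-decomposition trick: an idempotent $e$ with $eS$ isomorphic to a target module can be replaced by a projection $p$ with $pS=eS$), choose pairwise orthogonal homogeneous projections $g_i\in S$ with $\sum g_i = 1$ and $g_iS\cong_{\gr}P_i$ where $[P_i]=f([\iota_i(R_i)])$. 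The existence and orthogonality of such projections follow from $[\bigoplus P_i]=[S]$ combined with Lemma \ref{iso_carries_from_K0} and the graded $*$-version of the Jacobson–Halmos swap: pairwise orthogonal homogeneous idempotents summing to $1$ can be replaced by pairwise orthogonal projections summing to $1$.

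Next, within each corner $g_iSg_i$ I would produce a full system of $*$-matrix units $g^i_{kl}$ with $(g^i_{kl})^*=g^i_{lk}$, $g^i_{kl}g^i_{mn}=\delta_{lm}g^i_{kn}$, and $g^i_{kk}$ pairwise orthogonal homogeneous projections summing to $g_i$. To do this, decompose $g_iS\cong_{\gr}\bigoplus_{k=1}^{p(i)}(\gamma_{ik})Q_i$ for a suitable $Q_i$; the graded $*$-isomorphism $g_iSg_i\cong_{\gr}\M_{p(i)}(\End_S(Q_i))(\ol\gamma_i)$ transports the standard $*$-matrix units back to $g_iSg_i$, but to obtain the correct homogeneous degrees one needs the isomorphisms $(\gamma_{ik})Q_i\to (\gamma_{il})Q_i$ realized by elements whose inverses are their adjoints, which is exactly where the unitary hypothesis on the graded components of $K$ is used. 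Setting $\phi(e^i_{kl})=g^i_{kl}$ and extending $K$-linearly yields a graded $*$-algebra homomorphism $\phi\colon R\to S$ with $K_0^\Gamma(\phi)=f$ and $\phi(1_R)=\sum g_i=1_S$.

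The main obstacle I expect is the projection lifting together with the matrix-unit construction: in the non-involutive proof one may freely pick any idempotents representing the classes in $K_0^\Gamma(S)$, but in the $*$-setting one must simultaneously arrange that they are self-adjoint, pairwise orthogonal, sum appropriately, and admit a compatible set of partial isometries realizing the shifts $(\gamma_{ik})Q_i\cong(\gamma_{il})Q_i$. Everything else is a routine adaptation of the arguments used for Proposition \ref{fullness}, but this step is precisely the content of the unitary-component hypothesis on $K$, which fails for general graded $*$-fields and cannot be removed without weakening the conclusion.
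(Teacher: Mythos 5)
Your skeleton — reduce to the order-unit-preserving case by unitizing exactly as in the non-involutive Proposition \ref{fullness}, then build a system of $*$-compatible matrix units $g^i_{kl}$ with $(g^i_{kl})^*=g^i_{lk}$ and send $e^i_{kl}\mapsto g^i_{kl}$ — is the right one, and the unitization bookkeeping does carry over. The gap is in the mechanism you invoke to produce the projections and matrix units. You assert that $S$ is graded $*$-regular, that every finitely generated graded right ideal of $S$ is generated by a homogeneous projection, and that pairwise orthogonal homogeneous idempotents summing to $1$ can be traded for pairwise orthogonal projections summing to $1$ (your ``polar decomposition'' and ``Jacobson--Halmos swap''). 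None of this follows from the hypothesis that every nonzero graded component of $K$ contains a unitary. Take $K=\Cset$ trivially graded with the identity involution (so the only nonzero component contains the unitary $1$) and $S=\M_2(\Cset)$ with the $*$-transpose: the homogeneous idempotent $e=\tfrac12\bigl(\begin{smallmatrix}1 & i\\ -i & 1\end{smallmatrix}\bigr)$ generates a right ideal whose column space is spanned by $v=(1,-i)^{T}$, and any symmetric rank-one idempotent with that column space would have to be a scalar multiple of $vv^{T}$, which squares to $(v^{T}v)\,vv^{T}=0$. So $eS$ is generated by no projection, $S$ is not $*$-regular, and idempotents are not generally equivalent to projections, even though all hypotheses of the proposition hold.

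What the unitary hypothesis actually buys is different: it lets you realize the graded isomorphisms $(\gamma_{ik})Q\cong_{\gr}(\gamma_{il})Q$ and the reorderings of shift tuples by homogeneous elements $u$ of $K$ with $u^{-1}=u^{*}$, so that the \emph{explicit} block-diagonal maps with shifts (whose multiplicities are read off from what $f$ does to the generators $[e^i_{11}R]$) and the conjugations by diagonal matrices of homogeneous unitaries are graded $*$-algebra homomorphisms. This is how \cite[Theorem 2.7]{Roozbeh_Lia} proceeds: $\phi$ is written down directly in matrix form from that combinatorial data, and $K_0^{\Gamma}(\phi)=f$ is then checked because the induced map on $K_0^{\Gamma}$ depends only on the multiplicities and shifts; at no point does one need to lift an arbitrarily chosen idempotent of $S$ to a projection. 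In particular, the orthogonal homogeneous projections $g_i$ and the $*$-matrix units $g^i_{kl}$ you want do exist, but they must be exhibited as (unitary conjugates of) sums of the standard diagonal matrix units of $S$, not extracted from general idempotents representing the classes $f([\iota_i(R_i)])$. As written, your key step rests on a false premise, so the argument does not close.
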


The following theorem is a corollary of Theorem \ref{realization_ultrasimplicial} and Proposition \ref{fullness_involution}. 

\begin{theorem} (Ultrasimplicial Bigroup Realization) 
Let $\Gamma$ be an abelian group, let $\Delta$ be a subgroup of $\Gamma,$ and let $(G_n, u_n)$ be a sequence of simplicial $\Gamma$-groups with simplicial $\Gamma$-bases stabilized by $\Delta.$ 
\begin{enumerate}
\item If $(G,u)$ is a direct limit of $(G_n, u_n)$ in $\OG^{u*}_\Gamma$, there is a graded $*$-field $K$ such that every nonzero graded component contains a unitary element and $\Gamma_K=\Delta,$ there is a unital $\Gamma$-graded ultramatricial $*$-algebra $R$ over $K,$ and there is an isomorphism $f$ of $\OG^{u*}_\Gamma$ such that $f:(G, u)\cong (K_0^{\Gamma}(R), [R]).$
 
\item If $(G,D)$ is a direct limit of $(G_n, [0, u_n])$ in $\OG^{D*}_\Gamma$, there is a graded $*$-field $K$ as in (1), there is a $\Gamma$-graded ultramatricial $*$-algebra $R$ over $K,$ and there is an isomorphism $f$ of $\OG^{D*}_\Gamma$ such that 
$f:(G, D)\cong (K_0^{\Gamma}(R), D_R).$
\end{enumerate}
\label{realization_ultrasimplicial_star}
\end{theorem}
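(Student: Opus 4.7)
The plan is to mimic the proof of Theorem~\ref{realization_ultrasimplicial} verbatim, replacing Proposition~\ref{fullness} at each inductive step by its involutive analogue Proposition~\ref{fullness_involution}, and invoking the $*$-version (the second sentence) of Theorem~\ref{realization_simplicial} to produce the simplicial building blocks. First I would fix the graded $*$-field $K$: take any field $F$ equipped with an involution (the identity suffices) and set $K=F[\Delta]$ with $(a\delta)^*=a^*\delta^{-1}.$ Then $K$ is a $\Gamma$-graded $*$-field with $\Gamma_K=\Delta,$ and for every $\delta\in\Delta$ the homogeneous element $\delta\in K_\delta$ satisfies $\delta^*=\delta^{-1},$ so every nonzero homogeneous component contains a unitary. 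This secures the hypothesis required to apply Proposition~\ref{fullness_involution} throughout.

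Let $g_{nm}$ and $g_n$ denote the connecting and translational maps of the given direct system. I would construct, by induction on $n,$ graded matricial $*$-algebras $R_n$ over $K,$ graded $*$-algebra homomorphisms $\phi_{n(n+1)}:R_n\to R_{n+1}$ (required to be unit-preserving in case (1), but only graded $*$-algebra homomorphisms in case (2)), and isomorphisms $f_n:(G_n,u_n)\cong(K_0^{\Gamma}(R_n),[R_n])$ of $\OG^{u*}_\Gamma$ satisfying $K_0^{\Gamma}(\phi_{n(n+1)})f_n=f_{n+1}g_{n(n+1)}.$ At each step, the composition $f_{n+1}g_{n(n+1)}f_n^{-1}$ is a morphism in $\OG^{u*}_\Gamma$ in case (1) and in $\OG^{D*}_\Gamma$ in case (2), so Proposition~\ref{fullness_involution} lifts it to a graded $*$-algebra homomorphism $\phi_{n(n+1)}.$ Define $\phi_{km}$ for $k<m$ by composition to obtain the required directed system.

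Let $R$ be the direct limit of $(R_n,\phi_{nm})$ in $\GrURings^*$ in case (1) and in $\GrRings^*$ in case (2); by construction $R$ is a $\Gamma$-graded ultramatricial $*$-algebra over $K.$ By Proposition~\ref{direct_limits_K0}(2), the functor $K_0^{\Gamma}$ commutes with these limits in $\POG^{u*}_\Gamma,$ and Corollary~\ref{dir_lim_corollary} together with Corollary~\ref{direct_limits_star} identify the limit inside $\OG^{u*}_\Gamma$ or $\OG^{D*}_\Gamma$ respectively. The compatibility relations on the $f_n$ then produce a unique isomorphism $f:(G,u)\cong(K_0^{\Gamma}(R),[R])$ in $\OG^{u*}_\Gamma$ for case (1). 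For case (2), the same argument combined with the computation
\[
f(D)=\bigcup_{n}K_0^{\Gamma}(\phi_n)f_n([0,u_n])=\bigcup_{n}K_0^{\Gamma}(\phi_n)(D_{R_n})=D_R,
\]
transcribed directly from the proof of Theorem~\ref{realization_ultrasimplicial}(2), yields an isomorphism $f:(G,D)\cong(K_0^{\Gamma}(R),D_R)$ in $\OG^{D*}_\Gamma.$

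The main obstacle is ensuring that Proposition~\ref{fullness_involution} is applicable at every inductive step, that is, that the chosen graded $*$-field $K$ has a unitary in each nonzero homogeneous component. This is precisely what forces the specific construction $K=F[\Delta]$ rather than an arbitrary graded $*$-field with support $\Delta,$ and it also explains the abelianness assumption on $\Gamma$ inherited from Proposition~\ref{fullness_involution}. Once this choice is made, the rest of the argument is a routine upgrade of the proof of Theorem~\ref{realization_ultrasimplicial} to the bigroup setting, since all $G_n,$ $K_0^{\Gamma}(R_n),$ and their direct limits carry the trivial $\Zset_2$-action and all morphisms involved are $\Zset_2$-equivariant by construction.
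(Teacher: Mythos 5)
Your proposal is correct and follows essentially the same route as the paper: choose $K=F[\Delta]$ with the standard involution so that each $\delta\in\Delta$ is a unitary in $K_\delta$, then repeat the inductive construction of Theorem \ref{realization_ultrasimplicial} using Proposition \ref{fullness_involution} in place of Proposition \ref{fullness} and the bigroup version of Theorem \ref{realization_simplicial} for the building blocks.
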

\begin{proof}
Note that if $\Gamma$ is abelian, then $K=F[\Delta]$ from the proof of Theorem \ref{realization_ultrasimplicial} is a graded $*$-field such that  $\delta$ is a unitary element in $K_\delta=F\{\delta\}$ for every $\delta\in \Gamma_K=\Delta.$ The proof of Theorem \ref{realization_ultrasimplicial} transfers to the involutive case step by step  except that we are working in the category $\OG_\Gamma^{u*}$ instead of $\OG_\Gamma^u$ in part (1) and in $\OG_\Gamma^{D*}$ instead of $\OG_\Gamma^D$ in part (2). This is possible because we can use Proposition \ref{fullness_involution} instead of Proposition \ref{fullness}. Theorem \ref{realization_simplicial} was formulated for $\Gamma$-$\Zset_2$-bigroups also. 
\end{proof}

\section{Dimension \texorpdfstring{$\Gamma$}{TEXT}-groups}\label{section_dimension_group}

\subsection{Review of the trivial case.}
To motivate our definition of a dimension $\Gamma$-group, we review the relevant definitions in the case when $\Gamma$ is trivial. Recall that 
a pre-ordered abelian group $G$ is {\em unperforated} if the following condition holds. 
\begin{itemize}
\item For every positive integer $n$ and every $x\in G,$ $nx\geq 0$ implies that $x\geq 0.$
\end{itemize}
A pre-ordered abelian group $G$ is a {\em dimension group} if it is directed, partially ordered, unperforated and has the interpolation property. 

By \cite[Proposition 3.15]{Goodearl_interpolation_groups_book} and the paragraph following its proof, a directed and ordered group $G$ is a dimension group if and only if the following condition holds.
\begin{itemize}
\item[(SDP)] If $\sum_{i=1}^n a_ix_i=0$ for some $a_i\in \Zset$ and $x_i\in G^+,$ then there are a positive integer $m$, $b_{ij}\in\Zset^+,$ and $y_j\in G^+$ for $i=1,\ldots n$ and $j=1\ldots,m$ such that 
\begin{center}
$x_i=\sum_{j=1}^m b_{ij}y_j$ for all $i=1,\ldots, n\;\;$ and $\;\;\sum_{i=1}^n a_ib_{ij}=0$ for all $j=1,\ldots, m.$ 
\end{center}
\end{itemize}

This property, referred to as the {\em strong decomposition property} in \cite{Goodearl_interpolation_groups_book}, is the key to the proof of the main structural theorem of dimension groups: every dimension group is isomorphic to a direct limit of a directed system of simplicial groups. 

\subsection{Generalizing the properties of being unperforated and satisfying (SDP)}
First, we look for a meaningful way of defining the property of being an unperforated $\Gamma$-group. Given the trend of earlier definitions, one would expect that a pre-ordered $\Gamma$-group $G$ is unperforated if for every $x\in G$ and every $a\in \Zset^+[\Gamma],$ $ax\in G^+$ implies that $x$ is in $G^+.$ However, this property fails already in some rather basic cases which we would prefer not to rule out. For example, if $\Gamma=\langle x |x^2=1\rangle\cong \Zset_2$ and $G=\Zset[x]/\langle x^2=1\rangle\cong \Zset[\Zset_2],$ $G$ is a simplicial $\Gamma$-group and we would like it to be unperforated. However, $1+x\in\Zset^+[\Gamma],$ $(1+x)(1-x)=1-x^2=0\in G^+$ and $1-x\notin G^+.$

To obtain a more meaningful definition, let us consider the case when $G$ is a simplicial $\Gamma$-group with a simplicial $\Gamma$-basis $\{x_1,\ldots, x_n\}$ stabilized by $\Delta$ and let $ax\in G^+$ for $a\in\Zset^+[\Gamma]$ and $x=\sum_{j=1}^n b_jx_j\in G$ for some $b_j\in \Zset[\Gamma], j=1,\ldots, n.$ Then we have that $ax=\sum_{j=1}^n ab_jx_j\geq 0$ which implies that $\pi(ab_j)\geq 0$ for every $j=1,\ldots,n$ by Definition \ref{simplicial_cone_definition}. This motivates the following.

\begin{definition} Let $\Delta$ be a subgroup of $\Gamma$ and let $\pi:\Zset[\Gamma]\to\Zset[\Gamma/\Delta]$ be the natural $\Zset[\Gamma]$-module map.
A pre-ordered $\Gamma$-group $G$  is {\em unperforated} with respect to a subgroup $\Delta$ of $\Gamma$ if $ax\in G^+$ for some $x\in G$ and $a\in\Zset^+[\Gamma]$ implies that there are a positive integer $m,$ elements $y_j\in G^+$ and $b_j\in \Zset[\Gamma],$ $j=1,\ldots, m$ such that 
\[x=\sum_{j=1}^m b_jy_j\;\;\mbox{ and }\;\;\pi(ab_j)\geq 0\;\;\mbox{ for all }\;\;j=1,\ldots,m.\]
\end{definition}

Note that if $\Gamma$ is trivial, then this definition is equivalent to the condition that $nx\in G^+$ for $n\in \Zset^+$ implies $x\in G^+.$ Indeed, if $nx\in G^+$ for $n\in \Zset^+$ implies $x\in G^+,$ then one can choose $m=1,$ $b_1=1$ and $y_1=x.$ Conversely, assume that $nx\in G^+$ and   
there are a positive integer $m,$ $y_j\in G^+$ and $b_j\in \Zset,$ $j=1,\ldots, m,$ such that $x=\sum_{j=1}^m b_jy_j$ and $nb_j\geq 0$ for all $j=1,\ldots,m.$ The conditions $nb_j\geq 0$ and $n\geq 0$ imply that $b_j\geq 0$ and so $b_jy_j\in G^+$ for every $j.$ This implies that $x=\sum_{j=1}^m b_jy_j$ is in $G^+.$

It may also appear that it is sufficient to require that $m=1$ in the definition of an unperforated $\Gamma$-group. The following example illustrates that this is not sufficient.

\begin{example}
Let $\Gamma=\langle x | x^2=1\rangle\cong \Zset_2$ and $G=\Zset[\Gamma]\oplus\Zset[\Gamma].$ Then $G$ is a simplicial $\Gamma$-group with a simplicial $\Gamma$-basis $\{(1,0), (0,1)\}$ stabilized by $\{1\}.$ As such, it is unperforated with respect to $\{1\}$ by Proposition \ref{unperforated} (see below). Consider the elements $a=1+x\in \Zset^+[\Gamma]$ and $u=(1-x, 2-x)\in G$ such that $au=(0, 1+x)\in G^+.$ Assuming that there are $b=b_1+b_2x\in \Zset[\Gamma]$ and $y=(y_1,y_2)=(m_1+m_2x, n_1+n_2x)\in G^+$ such that $u=by$ and $ab\geq 0,$ we claim that we arrive at a contradiction. The relation $ab\geq0$ implies that $b_1+b_2\geq 0.$ The relation $by=u$ implies that $(b_1+b_2x)(m_1+m_2x)=1-x$ and $(b_1+b_2x)(n_1+n_2x)=2-x.$ The equation $(b_1+b_2x)(m_1+m_2x)=1-x$ implies $b_1m_1+b_2m_2=1$ and $b_2m_1+b_1m_2=-1.$ From this system of equations we obtain that $(b_2^2-b_1^2)m_2=b_2+b_1$ and $(b_1^2-b_2^2)m_1=b_2+b_1$ which implies that $b_1+b_2=0$ since $m_1\geq 0$ and $m_2\geq 0.$ Hence $b_2=-b_1$ and so  $b_1(m_1-m_2)=1$ which implies that $b_1=\pm 1.$  
With these values, the equation  $(b_1+b_2x)(n_1+n_2x)=2-x$ becomes  $\pm 1(1-x)(n_1+n_2x)=2-x$ which implies that  $\pm 1(n_1-n_2)=2$ and $\pm 1(-n_1+n_2)=-1.$ So, 
$2=\pm 1(n_1-n_2)=\mp1(-n_1+n_2)=1$ which is a contradiction. 
\end{example}
 
Now, we generalize the strong decomposition property as follows.

\begin{definition}
If $\Delta$ is a subgroup of $\Gamma,$ $\pi:\Zset[\Gamma]\to\Zset[\Gamma/\Delta]$ is the natural $\Zset[\Gamma]$-module map, and $G$ is a directed and ordered $\Gamma$-group, then $G$ has the {\em strong decomposition property} with respect to $\Delta$ if the following condition holds. 
\begin{itemize}
\item[(SDP$_\Delta$)] If $\sum_{i=1}^n a_ix_i=0$ for some $a_i\in \Zset[\Gamma]$ and $x_i\in G^+,$ then there are a positive integer $m$, $b_{ij}\in\Zset^+[\Gamma]$ and $y_j\in G^+$ for $i=1,\ldots n, j=1\ldots,m,$ such that \[x_i=\sum_{j=1}^m b_{ij}y_j\;\;\mbox{ for all }\;\;i=1,\ldots, n\;\;\mbox{ and }\;\;\sum_{i=1}^n \pi(a_ib_{ij})=0\;\;\mbox{ for all }\;\;j=1,\ldots, m.\] 
\end{itemize}
\label{SDP_Delta_definition}
\end{definition}

If $\Gamma$ is trivial, it is direct to check that (SDP$_{\{1\}}$) is equivalent to (SDP). Also, if $G$ is a directed and ordered $\Gamma$-group which satisfies (SDP$_\Delta$) for some subgroup $\Delta$ of $\Gamma$ then $G$ satisfies (SDP$_{\Delta'}$) for every subgroup $\Delta'$ of $\Gamma$ which contains $\Delta.$ Indeed, if $\pi'$ and $\pi_{\Delta\Delta'}$ denote the natural $\Zset[\Gamma]$-module homomorphisms $\Zset[\Gamma]\to\Zset[\Gamma/\Delta']$ and $\Zset[\Gamma/\Delta]\to\Zset[\Gamma/\Delta']$ respectively, then $\pi'=\pi_{\Delta\Delta'}\pi$ so the claim follows. Similarly, if $G$ is unperforated with respect to $\Delta$ then $G$ is unperforated with respect to $\Delta'.$ Thus, if $G$ is a directed and ordered $\Gamma$-group which satisfies (SDP$_\Delta$) for some subgroup $\Delta$ or is unperforated with respect to $\Delta,$ then the {\em smallest} such $\Delta$ is the most relevant. 

\begin{lemma}
A simplicial $\Gamma$-group with a simplicial $\Gamma$-basis stabilized by $\Delta$ satisfies (SDP$_{\Delta}$). As a corollary, an ultrasimplicial $\Gamma$-group which is a direct limit of simplicial $\Gamma$-groups with simplicial $\Gamma$-bases stabilized by $\Delta$ satisfies (SDP$_\Delta$).  
\label{lemma_simplicial_SDP}
\end{lemma}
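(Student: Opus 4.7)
The plan is to reduce to the case where the elements $x_i$ are basis elements by using the expansion given by Proposition \ref{properties_of_simplicial}(3). Let $\{e_1,\ldots, e_N\}$ denote the simplicial $\Gamma$-basis of $G$ stabilized by $\Delta$ and let $\pi:\Zset[\Gamma]\to \Zset[\Gamma/\Delta]$ be the natural map. Given a relation $\sum_{i=1}^n a_i x_i = 0$ with $a_i\in\Zset[\Gamma]$ and $x_i\in G^+$, part (3) of Proposition \ref{properties_of_simplicial} lets me write
\[
x_i = \sum_{k=1}^N c_{ik} e_k, \qquad c_{ik}\in \Zset^+[\Gamma],
\]
so that $\sum_k \bigl(\sum_i a_i c_{ik}\bigr) e_k = 0$. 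Part (2) of Proposition \ref{properties_of_simplicial} then gives $\pi\bigl(\sum_i a_i c_{ik}\bigr) = 0$, that is, $\sum_i \pi(a_i c_{ik}) = 0$ for every $k = 1,\ldots, N$.

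Now I simply set $m = N$, take $y_k = e_k\in G^+$, and $b_{ik} = c_{ik}\in\Zset^+[\Gamma]$. The expansion above gives $x_i = \sum_{k=1}^m b_{ik} y_k$, and the displayed equation gives $\sum_{i=1}^n \pi(a_i b_{ik}) = 0$ for all $k$. So (SDP$_\Delta$) is verified with these data, and no further work is needed. I do not expect any real obstacle here: the key observation is just that the $\Zset^+[\Gamma]$-coefficients of elements of $G^+$ relative to the basis directly serve as the decomposition coefficients.

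For the corollary, suppose $(G, g_{n}, g_{nm})$ is a direct limit of a sequence $(G_n)$ of simplicial $\Gamma$-groups with bases stabilized by $\Delta$, taken in some appropriate category (so that, by the construction in the proof of Proposition \ref{direct_limits}, $G^+ = \bigcup_n g_n(G_n^+)$). Given a relation $\sum_{i=1}^n a_i x_i = 0$ in $G$ with $x_i\in G^+$, I can find an index $n$ large enough and elements $x_i'\in G_n^+$ with $g_n(x_i') = x_i$ for all $i$. The relation $\sum_i a_i g_n(x_i') = 0$ in $G$ means that for some $m\geq n$ the pushforward satisfies $\sum_i a_i g_{nm}(x_i') = 0$ in $G_m$, because a directed union of kernels realizes the kernel in the limit. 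Setting $x_i'' = g_{nm}(x_i')\in G_m^+$, I apply the simplicial case to the relation $\sum_i a_i x_i'' = 0$ in $G_m$, obtaining $y_j''\in G_m^+$ and $b_{ij}\in\Zset^+[\Gamma]$ with $x_i'' = \sum_j b_{ij} y_j''$ and $\sum_i \pi(a_i b_{ij}) = 0$. Pushing forward by $g_m$ gives $y_j = g_m(y_j'')\in G^+$ and $x_i = g_m(x_i'') = \sum_j b_{ij} y_j$ in $G$; the condition $\sum_i \pi(a_i b_{ij}) = 0$ is intrinsic to the coefficients and is preserved. The only mild subtlety to watch here is the compatibility of the positive cone $G^+$ with the direct limit, which is guaranteed by Proposition \ref{direct_limits}, so this step is routine.
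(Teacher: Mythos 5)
Your proof is correct and follows essentially the same route as the paper: expand each $x_i$ in the simplicial $\Gamma$-basis with coefficients in $\Zset^+[\Gamma]$ via Proposition \ref{properties_of_simplicial}(3), collect coefficients, and apply part (2) to get $\sum_i \pi(a_i c_{ik})=0$, with the basis elements themselves serving as the $y_j$. For the corollary the paper merely asserts that (SDP$_\Delta$) passes to direct limits; your lift-to-a-finite-stage argument is the standard justification of that assertion and is carried out correctly.
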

\begin{proof}
Let $G$ be a simplicial $\Gamma$-group with a simplicial $\Gamma$-basis $X=\{e_1, \ldots, e_m\}$ and $\Delta=\Stab(X).$ Assume that $\sum_{i=1}^n a_ix_i=0$ for some $a_i\in\Zset[\Gamma]$ and $x_i\in G^+$ for $i=1,\ldots, n.$ By part (3) of Proposition \ref{properties_of_simplicial}, we can write $x_i$ as $\sum_{j=1}^m b_{ij}e_j$ for some $b_{ij}\in\Zset^+[\Gamma]$. Then $$0=\sum_{i=1}^n a_ix_i=\sum_{i=1}^n a_i\left(\sum_{j=1}^m b_{ij}e_j\right)=\sum_{j=1}^m\left(\sum_{i=1}^n a_i b_{ij}\right)e_j$$
which implies that $\sum_{i=1}^n \pi(a_i b_{ij})=0$ so $G$ satisfies (SDP$_{\Delta}$).  

Having (SDP$_\Delta$) is preserved under taking direct limits, so the second sentence of the lemma follows. 
\end{proof}
 
\begin{proposition}
If $G$ is an ordered and directed $\Gamma$-group which satisfies (SDP$_\Delta$), then $G$ is unperforated with respect to $\Delta.$ 
\label{unperforated} 
\end{proposition}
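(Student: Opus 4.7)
The plan is to reduce the unperforated condition to an instance of (SDP$_\Delta$) by engineering a suitable linear combination that equals zero. Given $a\in\Zset^+[\Gamma]$ and $x\in G$ with $ax\in G^+$, I would first use the hypothesis that $G$ is directed to write $x=x^+-x^-$ with $x^+,x^-\in G^+$. Setting $z=ax\in G^+$, the identity
\[
a\,x^+ + (-a)\,x^- + (-1)\,z \;=\; ax - z \;=\; 0
\]
holds in $G$, and all three group elements $x^+,x^-,z$ lie in $G^+$ while the coefficients $a,-a,-1$ lie in $\Zset[\Gamma]$, so (SDP$_\Delta$) applies.

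Next, I would feed this relation into (SDP$_\Delta$) with $n=3$ to obtain a positive integer $m$, elements $b_{1j},b_{2j},b_{3j}\in\Zset^+[\Gamma]$, and $y_j\in G^+$ such that $x^+=\sum_{j=1}^m b_{1j}y_j$, $x^-=\sum_{j=1}^m b_{2j}y_j$, $z=\sum_{j=1}^m b_{3j}y_j$, and for each $j$,
\[
\pi(a\,b_{1j}) - \pi(a\,b_{2j}) - \pi(b_{3j}) \;=\; 0.
\]
Define $b_j := b_{1j}-b_{2j}\in\Zset[\Gamma]$. Then
\[
\sum_{j=1}^m b_j y_j \;=\; \sum_{j=1}^m b_{1j}y_j - \sum_{j=1}^m b_{2j}y_j \;=\; x^+ - x^- \;=\; x,
\]
which is the required decomposition of $x$. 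Finally, from the relation above,
\[
\pi(a\,b_j) \;=\; \pi(a\,b_{1j}) - \pi(a\,b_{2j}) \;=\; \pi(b_{3j}) \;\geq\; 0,
\]
since $b_{3j}\in\Zset^+[\Gamma]$ and $\pi$ sends $\Zset^+[\Gamma]$ into $\Zset^+[\Gamma/\Delta]$. This gives exactly the condition needed in the definition of being unperforated with respect to $\Delta$.

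I do not expect any serious obstacle here: the argument amounts to packaging the $\Gamma$-trivial intuition (``$ax\geq 0$ forces $x=x^+-x^-$ to interact positively with $a$'') with the only available structural tool, namely (SDP$_\Delta$), and then translating the positivity of $b_{3j}$ through $\pi$. The one subtle point to verify in writing up is that the conclusion of (SDP$_\Delta$) does not require the coefficients $b_{ij}$ to arise in any ordered fashion, so freely mixing the decompositions of $x^+$, $x^-$, and $z$ across a common index set is exactly what the definition allows.
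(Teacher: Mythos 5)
Your proof is correct and follows essentially the same route as the paper's: decompose $x=x^+-x^-$ by directedness, apply (SDP$_\Delta$) to the relation $ax^+-ax^--ax=0$, and set $b_j=b_{1j}-b_{2j}$ so that $\pi(ab_j)=\pi(b_{3j})\geq 0$. No gaps.
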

\begin{proof}
Let $ax\in G^+$ for some $x\in G$ and $a\in \Zset^+[\Gamma].$ Since $G$ is directed, we can represent $x$ as $x^+-x^-$ for some $x^+, x^-\in G^+.$ Let $x_1=x^+, x_2=x^-,$ and $x_3=ax,$ so that $x_i\in G^+$ for $i=1,2,3.$ The relation $ax_1-ax_2-x_3=0$ implies that there are a positive integer $m$, elements $b_{ij}\in\Zset^+[\Gamma]$ and $y_j\in G^+$ for $i=1,2,3, j=1\ldots,m$ such that $x_i=\sum_{j=1}^m b_{ij}y_j$ for $i=1,2,3$ and $\pi(ab_{1j}-ab_{2j}-b_{3j})=0$ for all $j=1,\ldots, m.$ Hence $\pi(b_{3j})=\pi(a(b_{1j}-b_{2j}))$ and $\pi(b_{3j})\geq 0$ since $b_{3j}\geq 0.$ For $b_j=b_{1j}-b_{2j}$ we have that $\pi(ab_j)=\pi(b_{3j})\geq 0$ for all $j=1,\ldots,m,$ and $x=x^+-x^-=\sum_{j=1}^m (b_{1j}-b_{2j})y_j=\sum_{j=1}^m b_jy_j.$
\end{proof}

\subsection{Defining a dimension \texorpdfstring{$\Gamma$}{TEXT}-group}

If $G$ is a $\Gamma$-group as in Proposition \ref{unperforated}, we would like to have that $G$ has interpolation, just like in the case when $\Gamma$ is trivial. This can be shown under an additional assumption: that $\Delta$ is contained in the stabilizer of $G$ for some $\Delta$ for which (SDP$_\Delta$) holds. Since $\Stab(G)$ is a normal subgroup of $\Gamma,$ if $\Delta\subseteq \Stab(G),$ then the {\em normal closure} $\ol \Delta$ of $\Delta$ (i.e. the normal subgroup of $\Gamma$ generated by $\Delta$) is in $\Stab(G)$ as well. Before proving Proposition \ref{interpolation}, we show a short technical lemma. 

\begin{lemma}
If $G$ is a $\Gamma$-group, $\Delta$ is a subgroup of $\Gamma$ such that $\Delta\subseteq \Stab(G),$ and $\pi:\Zset[\Gamma]\to \Zset[\Gamma/\Delta]$ is the natural $\Zset[\Gamma]$-module map, then $\pi(a)=0$ implies $ax=0$ for all $a\in\Zset[\Gamma]$ and $x\in G.$
\label{stabilizer_lemma}
\end{lemma}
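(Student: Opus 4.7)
The plan is to write $a$ as a sum over left cosets of $\Delta$ in $\Gamma$, use the hypothesis $\pi(a)=0$ to read off a collapse condition on the coefficients within each coset, and then pass this collapse condition over to $ax$ by exploiting that $\Delta$ acts trivially on $G$. The key observation is that within a single coset $\gamma_j\Delta$, every element acts on $x\in G$ the same way, namely as $\gamma_j x$.

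In detail, fix a set $\{\gamma_j\mid j\in J\}$ of representatives of the left cosets $\Gamma/\Delta$. Any element $a\in\Zset[\Gamma]$ can be written as
\[
a=\sum_{j\in J_0}\sum_{\delta\in\Delta_j}k_{j,\delta}\,\gamma_j\delta,
\]
for some finite $J_0\subseteq J$ and finite subsets $\Delta_j\subseteq\Delta$. Applying $\pi$ and using that the cosets $\gamma_j\Delta$ are pairwise distinct for $j\in J_0$, the equation $\pi(a)=0$ is equivalent to $\sum_{\delta\in\Delta_j}k_{j,\delta}=0$ for every $j\in J_0$. This matches the description of $\ker\pi$ recorded in the paragraph preceding Definition \ref{simplicial_cone_definition}.

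Now I would use the hypothesis $\Delta\subseteq\Stab(G)$: for any $x\in G$ and any $\delta\in\Delta$ we have $\delta x=x$, hence $\gamma_j\delta x=\gamma_j x$ for all $j$ and $\delta\in\Delta_j$. Consequently,
\[
ax=\sum_{j\in J_0}\sum_{\delta\in\Delta_j}k_{j,\delta}\,\gamma_j\delta\, x=\sum_{j\in J_0}\Bigl(\sum_{\delta\in\Delta_j}k_{j,\delta}\Bigr)\gamma_j x=\sum_{j\in J_0}0\cdot\gamma_j x=0,
\]
which gives the desired conclusion.

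There is really no obstacle here; the content of the lemma is just to record that with $\Delta$ stabilizing $G$ pointwise, the $\Zset[\Gamma]$-action on $G$ factors through the quotient map $\pi:\Zset[\Gamma]\to\Zset[\Gamma/\Delta]$. The only small care needed is to group coefficients of $a$ by coset before invoking $\pi(a)=0$; once this is done, the trivial action of $\Delta$ collapses each coset block to a scalar multiple of $\gamma_j x$ with coefficient summing to zero.
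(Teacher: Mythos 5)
Your proof is correct and follows essentially the same argument as the paper's: decompose $a$ by left cosets of $\Delta$, read off from $\pi(a)=0$ that the coefficients within each coset sum to zero, and use $\Delta\subseteq\Stab(G)$ to collapse each coset block onto $\gamma_j x$. No issues.
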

\begin{proof}
Let $\{\gamma_j\,|\,j\in J\}$ be a set of left coset representatives for $\Gamma/\Delta.$ For $a\in \Zset[\Gamma],$ let $J_0$ be a finite subset of $J$ and let $\Gamma_j$ be a finite subset of $\gamma_j\Delta$ such that $a=\sum_{j\in J_0}\sum_{\gamma\in\Gamma_j} k_\gamma\gamma.$ The condition $\pi(a)=0$ implies that $\sum_{\gamma\in\Gamma_j} k_\gamma=0$ for all $j\in J_0.$ The condition  $\Delta\subseteq \Stab(G)$ implies that $\gamma x=\gamma_jx$ for all $\gamma\in\Gamma_j$ and any $x\in G$ so that $ax=\sum_{j\in J_0}\sum_{\gamma\in\Gamma_j} k_\gamma\gamma x=\sum_{j\in J_0}\sum_{\gamma\in\Gamma_j} k_\gamma\gamma_j x=\sum_{j\in J_0}\left(\sum_{\gamma\in\Gamma_j} k_\gamma\right)\gamma_j x=0.$
\end{proof}

\begin{proposition}
If $G$ is an ordered and directed $\Gamma$-group which satisfies (SDP$_\Delta$) for some $\Delta\subseteq \Stab(G),$ then $G$ has the interpolation property. 
\label{interpolation} 
\end{proposition}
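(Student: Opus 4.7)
My plan is to prove the Riesz refinement property, which for a partially ordered group is equivalent to interpolation (as noted in the paragraph preceding Lemma \ref{lemma_simplicial_interpolation}). So I want to show: whenever $x_1 + x_2 = y_1 + y_2$ with $x_i, y_k \in G^+$, there exist $z_{ik} \in G^+$ for $i,k\in\{1,2\}$ with $x_i = z_{i1}+z_{i2}$ and $y_k = z_{1k}+z_{2k}$.

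First, I rewrite the hypothesis as the $\Zset[\Gamma]$-linear relation $1\cdot x_1 + 1\cdot x_2 + (-1)\cdot y_1 + (-1)\cdot y_2 = 0$ among positive elements. Applying (SDP$_\Delta$) with $n=4$, $a_1=a_2=1$, $a_3=a_4=-1$, and the positive elements $x_1,x_2,y_1,y_2$, I obtain a positive integer $m$, elements $b_{rj}\in\Zset^+[\Gamma]$ for $r=1,\ldots,4$, $j=1,\ldots,m$, and elements $v_j\in G^+$ such that $x_i=\sum_j b_{ij}v_j$, $y_k=\sum_j b_{(k+2)j}v_j$, and for every $j$,
\[\pi(b_{1j})+\pi(b_{2j})=\pi(b_{3j})+\pi(b_{4j}) \quad \text{in } \Zset^+[\Gamma/\Delta].\]

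Second, the $\Gamma$-monoid $\Zset^+[\Gamma/\Delta]$ is, as a bare monoid, a direct sum of copies of $\Zset^+$ indexed by the cosets in $\Gamma/\Delta$, so it inherits Riesz refinement from $\Zset^+$. Applying refinement for each $j$ to the identity above produces elements $r_{ik}^j\in\Zset^+[\Gamma/\Delta]$, $i,k\in\{1,2\}$, with $\pi(b_{ij})=r_{i1}^j+r_{i2}^j$ and $\pi(b_{(k+2)j})=r_{1k}^j+r_{2k}^j$. Choose any lifts $s_{ik}^j\in\Zset^+[\Gamma]$ with $\pi(s_{ik}^j)=r_{ik}^j$ and define
\[z_{ik}=\sum_{j=1}^m s_{ik}^j\, v_j \in G^+.\]

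Third, I verify the refinement equalities. For each $i,j$ the element $b_{ij}-s_{i1}^j-s_{i2}^j$ lies in $\ker\pi$ because $\pi(b_{ij})=r_{i1}^j+r_{i2}^j=\pi(s_{i1}^j+s_{i2}^j)$. Since $\Delta\subseteq\Stab(G)$, Lemma \ref{stabilizer_lemma} gives $(b_{ij}-s_{i1}^j-s_{i2}^j)v_j=0$, so summing over $j$ yields $x_i=\sum_j b_{ij}v_j=z_{i1}+z_{i2}$. The same argument with $b_{(k+2)j}$ in place of $b_{ij}$ and $s_{1k}^j+s_{2k}^j$ in place of $s_{i1}^j+s_{i2}^j$ gives $y_k=z_{1k}+z_{2k}$. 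Hence $G$ has the Riesz refinement property, and therefore the interpolation property.

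The key conceptual step is passing from $G$ down to $\Zset^+[\Gamma/\Delta]$, where Riesz refinement is automatic, and then back up; the hypothesis $\Delta\subseteq\Stab(G)$ is precisely what makes this transit work, via Lemma \ref{stabilizer_lemma}, because it lets us replace any two $\Zset[\Gamma]$-coefficients with the same image under $\pi$ when acting on elements of $G$. Without this hypothesis, the lifted relations among the $s_{ik}^j$'s would only match $b_{ij}$ modulo $\ker\pi$, which would not be enough to recover the equalities in $G$.
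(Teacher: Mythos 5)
Your proof is correct and follows essentially the same route as the paper's: the paper's proof is a sketch that runs the argument of Lemma \ref{lemma_simplicial_interpolation} with the elements supplied by (SDP$_\Delta$) in place of a simplicial $\Gamma$-basis and with Lemma \ref{stabilizer_lemma} (via $\Delta\subseteq\Stab(G)$) replacing the condition (Ind), which is exactly what you have written out in full. No gaps.
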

\begin{proof}
The proof is the same as the proof of Lemma \ref{lemma_simplicial_interpolation} except that  
elements of $G^+$ which exist by (SDP$_\Delta$) replace the simplicial $\Gamma$-basis elements of the proof of Lemma \ref{lemma_simplicial_interpolation} and the condition $\Delta\subseteq \Stab(G)$ ensures the implication $\pi(a)=0 \Rightarrow ax=0$ for $x\in G$ by Lemma \ref{stabilizer_lemma}. This implication is needed to replace the condition (Ind) in the proof of Lemma \ref{lemma_simplicial_interpolation}. 
\end{proof}

Propositions \ref{unperforated} and \ref{interpolation} motivate the definition of a dimension $\Gamma$-group below. 

\begin{definition}
If $G$ is a directed and ordered $\Gamma$-group then $G$ is a {\em dimension $\Gamma$-group (or a dimension $\Zset[\Gamma]$-module)} if $G$ has the strong decomposition property with respect to some subgroup $\Delta$ of $\Gamma$ such that $\Delta\subseteq \Stab(G).$ 
\label{dimension_definition}
\end{definition}

If $\Gamma$ is trivial, Definition \ref{dimension_definition} coincides with the usual definition because any ordered and directed group has (SDP) if and only if it is unperforated and  has the interpolation property. By Lemma \ref{lemma_simplicial_SDP}, all ultrasimplicial $\Gamma$-groups satisfy (SDP$_\Delta$) for some subgroup $\Delta$ of $\Gamma.$ However, such $\Delta$ cannot always be found inside the stabilizer (see part (2) of Example \ref{example_free_permutation_module}). Thus, a simplicial $\Gamma$-group $G$ may fail to be a dimension $\Gamma$-group. If $G$ has a simplicial $\Gamma$-basis with a normal stabilizer, then $G$ is a dimension $\Gamma$-group by Proposition \ref{Delta_normal_case} so the following proposition holds. 

\begin{proposition}
If $G$ is an ultrasimplicial $\Gamma$-group which is a direct limit of a sequence of simplicial $\Gamma$-groups with simplicial $\Gamma$-bases stabilized by a normal subgroup of $\Gamma$, then $G$ is a countable dimension $\Gamma$-group. In particular, if $\Gamma$ is abelian, every  ultrasimplicial $\Gamma$-group is a countable dimension $\Gamma$-group.
\label{ultrasimplicial_is_dimension}
\end{proposition}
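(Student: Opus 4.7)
The plan is to verify the three conditions built into Definition~\ref{dimension_definition}: that $G$ is directed and ordered, that it satisfies $(\text{SDP}_\Delta)$ for some subgroup $\Delta$ of $\Gamma$, and that such $\Delta$ can be chosen inside $\Stab(G)$. Since $(G,u)$ (or $(G,D)$) is a direct limit in $\OG^u_\Gamma$ (respectively $\OG^D_\Gamma$), Proposition~\ref{direct_limits} ensures that $G$ is ordered and directed. With $\Delta$ the common stabilizer of the simplicial bases in the defining sequence, Lemma~\ref{lemma_simplicial_SDP} immediately yields $(\text{SDP}_\Delta)$ for $G$.

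The key step is to check $\Delta \subseteq \Stab(G)$, and this is where the normality hypothesis enters. Applying Proposition~\ref{Delta_normal_case}(1), the normality of $\Delta$ upgrades $\Stab(X_n) = \Delta$ (which always holds for a simplicial $\Gamma$-basis $X_n$) to $\Stab(G_n) = \Delta$ for every $n$. If $g_n : G_n \to G$ are the translational maps of the direct limit and $x = g_n(x_n)$ is an arbitrary element of $G$, then for $\delta \in \Delta$ the fact that $g_n$ is a $\Zset[\Gamma]$-module homomorphism gives $\delta x = g_n(\delta x_n) = g_n(x_n) = x$, so $\Delta \subseteq \Stab(G)$. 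Together with the previous paragraph this establishes that $G$ is a dimension $\Gamma$-group. Countability of $G$ follows from the countability of the defining sequence combined with Proposition~\ref{properties_of_simplicial}(5): each $G_n$ is isomorphic to a finite direct sum of copies of $\Zset[\Gamma/\Delta]$, hence countable when $\Gamma/\Delta$ is, and a countable direct limit of countable groups is countable.

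For the final assertion, when $\Gamma$ is abelian every subgroup is normal, so the stabilizer of any simplicial $\Gamma$-basis of any $G_n$ in the defining sequence is automatically normal and the argument above applies verbatim. The main obstacle is really mild and localized: it is precisely the bookkeeping that forces $\Delta \subseteq \Stab(G)$, which in general can fail (compare part~(2) of Example~\ref{example_free_permutation_module}) but goes through cleanly once Proposition~\ref{Delta_normal_case}(1) is invoked. No Shen-type decomposition machinery is needed here; that work is deferred to the converse direction, which is the subject of the subsequent noetherian results.
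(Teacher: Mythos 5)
Your proof is correct and follows essentially the same route as the paper: Lemma \ref{lemma_simplicial_SDP} supplies (SDP$_\Delta$) for the common stabilizer $\Delta$, and Proposition \ref{Delta_normal_case}(1) together with normality of $\Delta$ gives $\Stab(G_n)=\Delta$ for each term, which passes to $\Delta\subseteq\Stab(G)$ in the limit exactly as you argue. Your caveat on countability (each $\Zset[\Gamma/\Delta]$ is countable only when $\Gamma/\Delta$ is) is if anything more careful than the paper, which asserts countability without comment.
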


The statement ``simplicial groups are building blocks of every dimension group'' remains to hold in the case when $\Zset[\Gamma]$ is Noetherian. Under this assumption, we show Theorem \ref{any_dim_gr} stating that every dimension $\Gamma$-group is a direct limit of a directed system of simplicial $\Gamma$-groups which can be constructed so that their simplicial $\Gamma$-bases are stabilized by a normal subgroup of $\Gamma.$

\subsection{Dimension \texorpdfstring{$\Gamma$}{TEXT}-groups as direct limits of ultrasimplicial \texorpdfstring{$\Gamma$}{TEXT}-groups}\label{subsection_dim_as_lim_simplicial}

The main objective of this section is to show Theorem \ref{any_dim_gr}. The assumption that $\Zset[\Gamma]$ is a Noetherian ring enables us to adapt the steps of the proof of the classic result (see \cite[Section 3.4]{Goodearl_interpolation_groups_book}) to the case when $\Gamma$ is nontrivial. First, we show that the appropriately modified Shen criterion, Proposition \ref{telescoping}, holds if $\Zset[\Gamma]$ is Noetherian. Then we show that Proposition \ref{telescoping} implies Theorem \ref{countable_dim_gr} which then implies Theorem \ref{any_dim_gr}. We also note that the proof of Proposition \ref{telescoping} requires only that $\Zset[\Gamma]$ is {\em left} Noetherian. However, $\Zset[\Gamma]$ is left Noetherian if and only if it is right Noetherian (the natural involution is an isomorphism between $\Zset[\Gamma]$ and the opposite ring $\Zset[\Gamma]^{op}$), so we are not assuming more than we need in the proof. 

If $\Gamma$ is  polycyclic-by-finite (has a normal subgroup of finite index which admits a subnormal series with cyclic factors) then $\Zset[\Gamma]$ is a Noetherian ring by Hall (\cite{Hall}). Thus, if $\Gamma$ is finite or finitely generated abelian, then  $\Zset[\Gamma]$ is Noetherian. If $\Gamma$ is abelian, then $\Zset[\Gamma]$ is Noetherian exactly when $\Gamma$ is finitely generated. Indeed, if $\Zset[\Gamma]$ is Noetherian and $\Gamma$ is abelian, then $\Gamma$ is a Noetherian group (each subgroup is finitely generated) which implies that $\Gamma$ is finitely generated itself. 

\begin{proposition} (Generalized Shen criterion) If $\Gamma$ is such that $\Zset[\Gamma]$ is Noetherian, $G_1$ is a simplicial $\Gamma$-group with a simplicial $\Gamma$-basis stabilized by a normal subgroup $\Delta$ of $\Gamma$, $G$ is a dimension $\Gamma$-group which satisfies (SDP$_\Delta$) and $\Delta\subseteq \Stab(G)$, and if $g_1:G_1\to G$ is a morphism of $\OG_\Gamma,$ then there is a simplicial $\Gamma$-group $G_2$ with a simplicial $\Gamma$-basis stabilized by $\Delta$ and morphisms $g_{12}: G_1\to G_2$ and $g_2: G_2\to G$ in $\OG_\Gamma$ such that $g_1=g_2g_{12}$ and $\ker g_{12}=\ker g_1.$ 
\label{telescoping}
\end{proposition}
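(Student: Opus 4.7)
The plan is to adapt the classical Shen criterion, with the main new ingredient being that normality of $\Delta$ makes the natural map $\pi\colon\Zset[\Gamma]\to\Zset[\Gamma/\Delta]$ a ring homomorphism, so that successive applications of (SDP$_\Delta$) remain compatible. Since $G_1$ is generated as a $\Zset[\Gamma]$-module by its simplicial $\Gamma$-basis $x_1,\ldots,x_n$ and $\Zset[\Gamma]$ is noetherian, the submodule $\ker g_1\subseteq G_1$ is finitely generated. I would pick generators $v_1,\ldots,v_N$ and write $v_l=\sum_{i=1}^n a_{li}x_i$ with $a_{li}\in\Zset[\Gamma]$, giving the $N$ relations $\sum_i a_{li}g_1(x_i)=0$ with $g_1(x_i)\in G^+$.

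The core of the argument is to produce a single decomposition of the $g_1(x_i)$'s that is compatible with all $N$ relations simultaneously. Starting from the trivial decomposition $g_1(x_i)=g_1(x_i)$ (the vacuous case $l=0$), I would proceed inductively on $l$ to construct $b_{ik}^{(l)}\in\Zset^+[\Gamma]$ and $w_k^{(l)}\in G^+$ satisfying $g_1(x_i)=\sum_k b_{ik}^{(l)}w_k^{(l)}$ and $\sum_i\pi(a_{l'i}b_{ik}^{(l)})=0$ for every $l'\le l$ and every $k$. The inductive step rewrites the $(l+1)$-st relation as $\sum_k\bigl(\sum_i a_{(l+1),i}b_{ik}^{(l)}\bigr)w_k^{(l)}=0$ and applies (SDP$_\Delta$) to it, producing $w_k^{(l)}=\sum_t e_{kt}^{(l+1)}w_t^{(l+1)}$ with $e_{kt}^{(l+1)}\in\Zset^+[\Gamma]$; one then sets $b_{it}^{(l+1)}=\sum_k b_{ik}^{(l)}e_{kt}^{(l+1)}$. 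The new condition for $l'=l+1$ is precisely the (SDP$_\Delta$) conclusion, and the conditions for $l'\le l$ survive because $\pi$ is a ring homomorphism: $\sum_i\pi(a_{l'i}b_{it}^{(l+1)})=\sum_k\bigl(\sum_i\pi(a_{l'i}b_{ik}^{(l)})\bigr)\pi(e_{kt}^{(l+1)})$, which vanishes by the inductive hypothesis.

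After $N$ steps I obtain $g_1(x_i)=\sum_{k=1}^m b_{ik}w_k$ with $\sum_i\pi(a_{li}b_{ik})=0$ for all $l,k$. I would then let $G_2$ be the direct sum of $m$ copies of $\Zset[\Gamma/\Delta]$, which by Example~\ref{example_free_permutation_module} is a simplicial $\Gamma$-group with standard basis $e_1,\ldots,e_m$ stabilized by $\Delta$; define $g_2\colon G_2\to G$ by $e_k\mapsto w_k$ and $g_{12}\colon G_1\to G_2$ by $x_i\mapsto\sum_k b_{ik}e_k$, extending $\Zset[\Gamma]$-linearly (both extensions are well defined because $\Delta$ acts trivially on $G$, $G_1$, and $G_2$). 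Positivity of $g_2$ and $g_{12}$ follows from part~(3) of Proposition~\ref{properties_of_simplicial} together with $b_{ik}\in\Zset^+[\Gamma]$, and $g_2g_{12}=g_1$ is immediate, which yields $\ker g_{12}\subseteq\ker g_1$. Conversely, $g_{12}(v_l)=\sum_k\bigl(\sum_i a_{li}b_{ik}\bigr)e_k$; each coefficient lies in $\ker\pi$ and $\Delta$ stabilizes $e_k$, so Lemma~\ref{stabilizer_lemma} applied in $G_2$ gives $g_{12}(v_l)=0$, hence $\ker g_1\subseteq\ker g_{12}$.

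The main obstacle is the inductive refinement in the second paragraph: a single application of (SDP$_\Delta$) is insufficient, and one must iterate in a way that preserves all previously obtained vanishing conditions. This is exactly where the two hypotheses of the proposition are used—noetherianness of $\Zset[\Gamma]$ bounds the number of relations that must be treated, while normality of $\Delta$ makes $\pi$ multiplicative, which is precisely what allows each fresh application of (SDP$_\Delta$) to respect the earlier ones.
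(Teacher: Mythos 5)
Your proof is correct and follows essentially the same strategy as the paper's: reduce to finitely many generators of $\ker g_1$ via noetherianness, then iterate (SDP$_\Delta$) once per generator, using normality of $\Delta$ (multiplicativity of $\pi$) to keep earlier vanishing conditions intact. The paper packages the iteration as a composition of maps through intermediate simplicial $\Gamma$-groups rather than as explicit coefficient bookkeeping, but the two presentations are equivalent.
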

This proposition is the key for the inductive argument of creating a ``telescope'' of simplicial $\Gamma$-groups $G_1\to G_2\to\ldots$ with $G$ as their limit. 
Given $g_1: G_1\to G$ as in the proposition, one would like to take $G_2=G_1/\ker g_1$ but this may fail to be simplicial. The strong decomposition property guarantees that one can find an appropriate simplicial $\Gamma$-group $G_2$.  
\begin{proof}
The proof parallels the case when $\Gamma$ is trivial given in \cite[Proposition 3.16]{Goodearl_interpolation_groups_book} (also \cite[Lemma IV.7.1]{Davidson}). We modify some arguments using definitions and results of this paper when necessary. First we show the following claim.  

{\bf Claim.} If $z_1, \ldots, z_k$ are in $\ker g_1,$ then there is a simplicial $\Gamma$-group $G_2$ with a simplicial $\Gamma$-basis stabilized by $\Delta$ and there are morphisms $g_{12}: G_1\to G_2,$ and $g_2: G_2\to G$ in $\OG_\Gamma$ such that $g_1=g_2g_{12}$ and $z_1,\ldots, z_k\in\ker g_{12}.$  

Clearly if $G_1$ is trivial, one can take $G_2=G_1,$ and $g_{12}, g_2$ to be the trivial maps. Hence assume that $G_1$ is nontrivial and let $\{e_1, \ldots, e_n\}$ be a simplicial $\Gamma$-basis stabilized by $\Delta$. We prove the claim by induction on $k$. If $k=1,$ let $z=z_1\in\ker g_1.$ Then $z=\sum_{i=1}^n a_ie_i$ for some $a_i\in \Zset[\Gamma], i=1,\ldots,n.$
Let $x_i=g_1(e_i)\in G^+.$ Since $\sum_{i=1}^n a_ix_i=g_1(\sum_{i=1}^n a_ie_i)=g_1(z)=0,$ there are a positive integer $m$, $b_{ij}\in\Zset^+[\Gamma],$ and $y_j\in G^+$ for $i=1,\ldots n, j=1\ldots,m$ such that $x_i=\sum_{j=1}^m b_{ij}y_j$ for all $i=1,\ldots, n$ and $\sum_{i=1}^n \pi(a_ib_{ij})=0$ for all $j=1,\ldots, m$ by (SDP$_\Delta$).  

Let $G_2$ be a simplicial $\Gamma$-group with a simplicial $\Gamma$-basis $\{f_1,\ldots, f_m\}$ stabilized by $\Delta$ (for example, one such group is given in Example  \ref{example_free_permutation_module}). Let $g_{12}(e_i)=\sum_{j=1}^m b_{ij}f_j,$ and $g_2(f_j)=y_j$ for $j=1,\ldots, m$ and extend these maps to morphisms in $\OG_\Gamma.$ By the assumption that $\Delta$ is normal, $\pi$ is both a left and a right $\Zset[\Gamma]$-module homomorphism. Thus, the extension $g_{12}$ is well-defined since if $\sum_{i=1}^n c_ie_i=0,$ then $\pi(c_i)=0$ for all $i=1,\ldots, n$ and then $\pi(c_ib_{ij})=0$ for all $i=1,\ldots,n$ and $j=1,\ldots, m$ so that $g_{12}(\sum_{i=1}^n c_ie_i)=\sum_{j=1}^m\left(\sum_{i=1}^n c_ib_{ij}\right)f_j=0.$ The extension $g_2$ is well-defined since if $\sum_{j=1}^m c_jf_j=0,$ then $\pi(c_j)=0$ for all $j=1,\ldots, m,$ and so $c_jy_j=0$ for all $j=1,\ldots, m$ by Lemma \ref{stabilizer_lemma}. This implies $g_2(\sum_{j=1}^m c_jf_j)=\sum_{j=1}^m c_jy_j=0.$ The maps $g_{12}$ and $g_2$ are order-preserving since $b_{ij}\in\Zset^+[\Gamma]$ and $y_j\in G^+.$ By the definitions of these maps, we have that 
$$g_2g_{12}(e_i)=g_2\left(\sum_{j=1}^m b_{ij}f_j\right)=\sum_{j=1}^m b_{ij}y_j=x_i=g_1(e_i)$$ 
and so $g_2g_{12}=g_1.$ We have that $z\in \ker g_{12}$ since 
$$g_{12}(z)=g_{12}(\sum_{i=1}^n a_ie_i)=\sum_{i=1}^n a_i\left(\sum_{j=1}^m b_{ij}f_j\right)=\sum_{j=1}^m\left(\sum_{i=1}^n a_i b_{ij}\right)f_j=0$$
where the last equality holds since $\pi\left(\sum_{i=1}^n a_i b_{ij}\right)=0.$ 

Assuming that the assumption holds when considering $k$ elements in $\ker g_1,$ let $z_1,\ldots, z_k, z_{k+1}$ be in $\ker g_1.$ Let $H$ be a simplicial $\Gamma$-group with a simplicial $\Gamma$-basis stabilized by $\Delta$ which exists by the induction hypothesis for $z_1,\ldots, z_k,$ and let $g_{1H}: G_1\to H$ and $ g_H: H\to G$ be 
morphisms in $\OG_\Gamma$ such that $g_Hg_{1H}=g_1$ and $z_1,\ldots, z_k\in \ker g_{1H}.$ Then $0=g_1(z_{k+1})=g_Hg_{1H}(z_{k+1})$ and so $g_{1H}(z_{k+1})\in \ker g_H.$ By the case $k=1,$ there are a simplicial $\Gamma$-group $G_2$ with a simplicial $\Gamma$-basis stabilized by $\Delta$ and $g_2:G_2\to G, g_{H2}: H\to G_2$ such that $g_H=g_2g_{H2}$ and $g_{1H}(z_{k+1})\in \ker g_{H2}.$ Let $g_{12}=g_{H2}g_{1H}.$ Then $g_2g_{12}=g_2g_{H2}g_{1H}=g_Hg_{1H}=g_1$ and $z_1,\ldots, z_{k+1}\in \ker g_{12}.$ This finishes the proof of the claim.   

To prove the proposition now, we use the assumption that $\Zset[\Gamma]$ is a Noetherian ring so that every finitely generated $\Zset[\Gamma]$-module has its submodules finitely generated. In particular, any $\Zset[\Gamma]$-submodule of any simplicial $\Gamma$-group is finitely generated. As a result, $\ker g_1$ is finitely generated as a $\Zset[\Gamma]$-module. Let $\{z_1, \ldots, z_k\}$ be generators of $\ker g_1.$ If $G_2, g_{12}$ and $g_2$ are as in the claim, then $\ker g_1\subseteq \ker g_{12}$ follows from  $z_1,\ldots, z_k\in\ker g_{12}$ and $\ker g_{12}\subseteq\ker g_1$ follows from $g_1=g_2g_{12}.$ 
\end{proof}

\begin{theorem}
Let $\Gamma$ be a group such that $\Zset[\Gamma]$ is Noetherian and let $G$ be a countable dimension $\Gamma$-group which satisfies (SDP$_\Delta$) for some $\Delta$ with $\Delta\subseteq \Stab(G)$. By replacing $\Delta$ with its normal closure in $\Gamma$, we can assume that $\Delta$ is normal.  
\begin{enumerate}
\item There is a countable sequence of simplicial $\Gamma$-groups $G_n$ with simplicial $\Gamma$-bases stabilized by $\Delta$ and an isomorphism between $G$ and a direct limit of the $\Gamma$-groups $G_n$ in $\OG_\Gamma.$
 
\item If $(G, u)$ is an object of $\OG_\Gamma^u,$ the simplicial $\Gamma$-groups and the isomorphism from (1) can be found in $\OG_\Gamma^u.$

\item If $(G, D)$ is an object of $\OG^D_\Gamma$, the simplicial $\Gamma$-groups and the isomorphism from (1) can be found in $\OG_\Gamma^D.$
\end{enumerate}
If $G$ is considered to be a $\Gamma$-$\Zset_2$-bigroup with the trivial $\Zset_2$-action, the simplicial $\Gamma$-groups and isomorphisms in the three parts can be found in the categories $\OG_\Gamma^*, \OG_\Gamma^{u*},$ and $\OG_\Gamma^{D*}$ respectively.  
\label{countable_dim_gr}
\end{theorem}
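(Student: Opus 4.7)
The plan is to realize $G$ as a direct limit of simplicial $\Gamma$-groups with $\Gamma$-bases stabilized by $\Delta$, by an inductive telescoping argument built on Proposition \ref{telescoping} (the generalized Shen criterion). First I replace $\Delta$ by its normal closure $\overline{\Delta}$: since $\Stab(G)$ is normal and contains $\Delta$, it contains $\overline{\Delta}$, and since (SDP$_\Delta$) implies (SDP$_{\overline{\Delta}}$) (as noted in the paragraph following Definition \ref{SDP_Delta_definition}), I may assume $\Delta$ normal. Countability of $G$ gives an enumeration $G^+ = \{x_1, x_2, \dots\}$.

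For part (1) I start with $G_0 = 0$ and $f_0 = 0$. Inductively, given $(G_n, f_n)$ with $G_n$ simplicial (basis stabilized by $\Delta$) and $f_n\colon G_n \to G$ in $\OG_\Gamma$, I form $H_n = G_n \oplus \Zset[\Gamma/\Delta]\cdot s_{n+1}$ (still simplicial with basis stabilized by $\Delta$) and extend $f_n$ to $\phi_n\colon H_n \to G$ by $s_{n+1} \mapsto x_{n+1}$. This is well-defined and order-preserving because $\Delta \subseteq \Stab(x_{n+1})$ (a consequence of $\Delta \subseteq \Stab(G)$) and $x_{n+1} \in G^+$. Applying Proposition \ref{telescoping} to $\phi_n$ yields a simplicial $\Gamma$-group $G_{n+1}$ (basis stabilized by $\Delta$) and morphisms $g_n\colon H_n \to G_{n+1}$, $f_{n+1}\colon G_{n+1}\to G$ in $\OG_\Gamma$ with $f_{n+1}\, g_n = \phi_n$ and $\ker g_n = \ker \phi_n$; I set $g_{n,n+1} := g_n|_{G_n}$. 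Let $\widehat G = \varinjlim G_n$ in $\OG_\Gamma$ (which exists by Proposition \ref{direct_limits}) and let $f\colon \widehat G \to G$ be the induced map. Injectivity is immediate from $h \in \ker f_n \Rightarrow (h,0) \in \ker \phi_n = \ker g_n \Rightarrow g_{n,n+1}(h)=0$. Surjectivity follows because each $x_{n+1} = f_{n+1}(g_n(s_{n+1}))$ and $G$ is directed. For order-reflection, if $f_n(h) = x_m$ with $m\geq n$, then $g_{n,m}(h)$ and $g_{m-1}(s_m)$ have the same $f_m$-image, so by injectivity they agree in $\widehat G$; the latter is positive, hence so is the class of $h$. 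Thus $f$ is an isomorphism in $\OG_\Gamma$.

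Parts (2) and (3) are refinements that carry an order-unit or generating interval along the telescope. For (2) I initialize with $G_0 = \Zset[\Gamma/\Delta]$ and $u_0$ the canonical generator mapping to $u$. At stage $n+1$ I choose $a_n \in \Zset^+[\Gamma]$ with $x_{n+1} \le a_n u$ and enlarge $H_n$ with \emph{two} new basis elements, $s_1 \mapsto x_{n+1}$ and $s_2 \mapsto a_n u - x_{n+1}$, both positive. The element $a_n u_n - s_1 - s_2$ lies in $\ker \phi_n$, so Shen's construction, arranged so that $u$ appears among the positive elements produced by the SDP$_\Delta$ decomposition and that every new simplicial basis element $p_j$ of $G_{n+1}$ appears with strictly positive coefficient in $g_n(u_n)$ (which, by part (3) of Proposition \ref{properties_of_simplicial}, forces $p_j \le \gamma\, g_n(u_n)$ for some $\gamma\in\Zset^+[\Gamma]$), yields $u_{n+1} := g_n(u_n)$ as an order-unit of $G_{n+1}$ with $g_{n,n+1}(u_n) = u_{n+1}$ and $f_{n+1}(u_{n+1}) = u$. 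Part (3) proceeds analogously: $D$ and Proposition \ref{description_of_kernel} guide the choice of new basis elements so that their $\phi_n$-images lie in $D$; the canonical generating intervals $[0, u_n]$ of the simplicial groups (see Proposition \ref{generating_interval_of_simplicial}) then map into $D$, and Proposition \ref{direct_limits} supplies the limit in $\OG_\Gamma^D$.

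For the $\Gamma$-$\Zset_2$-bigroup versions with trivial $\Zset_2$-action on $G$, I equip every constructed simplicial $\Gamma$-group with the trivial $\Zset_2$-action; all maps are $\Zset[\Gamma]$-linear and therefore commute with this action, so the construction transfers to $\OG_\Gamma^*$, $\OG_\Gamma^{u*}$, and $\OG_\Gamma^{D*}$. The main obstacle is the delicate simultaneous bookkeeping in parts (2) and (3): one must at each stage (i) add $x_{n+1}$ to the image, (ii) apply Shen to kill the current kernel, and (iii) ensure that $u_n$ (resp.\ $[0, u_n]$) propagates to an order-unit of $G_{n+1}$ mapping to $u$ (resp.\ to a generating interval contained in $D$). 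Step (iii) depends crucially on the normality of $\Delta$, which makes $\pi\colon \Zset[\Gamma]\to \Zset[\Gamma/\Delta]$ a two-sided module homomorphism (needed for Shen's construction to be well-defined on both sides, via Lemma \ref{stabilizer_lemma} and Proposition \ref{Delta_normal_case}), and on the noetherian hypothesis, which through Proposition \ref{telescoping} lets one dispose of the entire kernel in a single application of Shen at each stage.
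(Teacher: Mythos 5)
Part (1) of your proposal is essentially the paper's argument: the same telescope built by adjoining a copy of $\Zset[\Gamma/\Delta]$ mapping onto the next element of $G^+$ and applying Proposition \ref{telescoping} to kill the kernel (finitely generated by noetherianity), followed by the standard verification that the induced map from the limit is an order-isomorphism. That part is correct.

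Parts (2) and (3) are where you diverge from the paper, and where there is a genuine gap. You try to build the order-unit (resp.\ generating interval) into the telescope itself, and the entire weight of the argument rests on the claim that the Shen step can be ``arranged so that \dots every new simplicial basis element $p_j$ of $G_{n+1}$ appears with strictly positive coefficient in $g_n(u_n)$,'' so that $g_n(u_n)$ is again an order-unit. Proposition \ref{telescoping} gives you no control of this kind: the SDP$_\Delta$ decomposition hands you elements $y_j\in G^+$ and coefficients $b_{ij}\in\Zset^+[\Gamma]$, and nothing prevents some $y_j$ (hence some basis element $f_j$ of $G_{n+1}$) from occurring only in the decompositions of the newly adjoined generators $s_1,s_2$, or from occurring in none of the images $g_n(e_i)$ at all; in the latter case the corresponding coordinate of $g_n(u_n)$ is literally zero and $f_j\not\leq a\,g_n(u_n)$ for any $a\in\Zset^+[\Gamma]$ (the $j$-th coordinate of $a\,g_n(u_n)-f_j$ is $-\Delta$). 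One can in fact repair this — coordinates hit by old basis elements are nonzero because $u_n$ is an order-unit of $G_n$, coordinates hit only by $s_1,s_2$ are nonzero because of the relation $g_n(s_1)+g_n(s_2)=a_n\,g_n(u_n)$ forced by $a_nu_n-s_1-s_2\in\ker\phi_n$, and coordinates hit by nothing can be deleted from $G_{n+1}$ without disturbing $g_2g_{12}=g_1$ or $\ker g_{12}=\ker g_1$ — but none of this is in your proof, and it is exactly the content of the statement. Your treatment of (3) (``$D$ and Proposition \ref{description_of_kernel} guide the choice of new basis elements'') is too vague to assess; note also that Proposition \ref{description_of_kernel} has hypotheses (interpolation, an order-unit mapping to $\Delta$) that you would have to verify in the setting you invoke it.

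The paper avoids all of this bookkeeping by a cleaner post-processing step: it runs the telescope of part (1) first, obtaining $H_n\to G$, then chooses preimages $u_n=h_{1n}(u_1)$ of $u$ (resp.\ a cofinal sequence $v_n$ in $D$ with preimages $u_n$), and replaces each $H_n$ by the convex $\Gamma$-subgroup generated by $\{u_n\}$. Such a subgroup is a directed convex $\Gamma$-subgroup, hence a $\Gamma$-ideal, and Proposition \ref{ideals_in_simplicial} guarantees it is again simplicial with basis stabilized by $\Delta$ and that $u_n$ is an order-unit of it. This is the step your proof is missing a substitute for; I would recommend either adopting it or writing out the coordinate-by-coordinate argument sketched above in full.
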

\begin{proof}
If $\Gamma$ is trivial, the statement becomes \cite[Theorem 3.17 and Corollary 3.18]{Goodearl_interpolation_groups_book} (parts (1) and (2))  and \cite[Theorem IV.7.3]{Davidson} (parts (2) and (3)). Our proof follows the ideas of the proofs of these results, modifying the arguments using the results of this paper.  

To show (1), we use the assumption that $G$ is countable to index the elements of $G^+$ by the infinite countable cardinal $\omega$ so that $G^+=\{x_n\, |\, n\in \omega\}$ with repetitions of elements allowed. We use induction to construct a sequence of simplicial $\Gamma$-groups $G_n, n\in \omega,$ with simplicial $\Gamma$-bases stabilized by $\Delta$ and morphisms $g_{n(n+1)}: G_n\to G_{n+1}$ and $g_n: G_n\to G$ of $\OG_\Gamma$ such that the following three conditions hold $g_{n+1}g_{n(n+1)}=g_n,$ $\ker g_n=\ker g_{n(n+1)}$ and $x_n\in g_n(G_n^+)$. Let $G_0=\Zset[\Gamma/\Delta]$ and let $g_0$ be the $\Gamma$-group homomorphism extension of the map $\Delta\mapsto x_0.$ This extension is well-defined since if $a\Delta=0$ for some $a\in \Zset[\Gamma],$ then $\pi(a)=0.$ The assumption $\Delta\subseteq\Stab(G)$ ensures that $ax_0=0$ by Lemma \ref{stabilizer_lemma}. The map $g_0$ is order-preserving since $x_0$ is in $G^+.$  

Now let us assume that $G_i, g_i, i=0,\ldots, n$ and $g_{i(i+1)}, i=0, \ldots, n-1$ have been defined and that they have the three required properties. Let $H=G_n\oplus \Zset[\Gamma/\Delta]$ so that $H$ is a simplicial $\Gamma$-group with a simplicial $\Gamma$-basis stabilized by $\Delta$ and let $h: H\to G$ be the $\Gamma$-group homomorphism extension of the map $(x,0)\mapsto g_n(x)$ and $(0, \Delta)\mapsto x_{n+1}.$ This extension is well-defined by the assumption that $\Delta\subseteq\Stab(G)$ and $h$ is order-preserving since $g_n$ is order-preserving and $x_{n+1}\in G^+.$ By Proposition \ref{telescoping}, there is a simplicial $\Gamma$-group $G_{n+1}$ with a simplicial $\Gamma$-basis stabilized by $\Delta$ and morphisms $g_{H}: H\to G_{n+1}$ and $g_{n+1}: G_{n+1}\to G$ of $\OG_\Gamma$ such that $g_{n+1}g_{H}=h$ and $\ker g_H=\ker h.$ Let $i: G_n\to H$ be the natural injection and let $g_{n(n+1)}=g_Hi.$ The three desired properties are satisfied by definitions. 

Let $g_{nm}=g_{n(n+1)}\ldots g_{(m-1)m}$ for $n<m$ and let $F$ be a direct limit of $(G_n, g_{nm})$ in $\OG_\Gamma$ which exists by Proposition \ref{direct_limits}. Let $f_n: G_n\to F$ be the translational maps and let $f: F\to G$ be a unique morphism of $\OG_\Gamma$ such that $ff_n=g_n.$ Thus we have that $f(F^+)\subseteq G^+.$ The converse holds since  $x_n\in g_n(G_n^+)=ff_n(G_n^+)\subseteq f(F^+)$ for every $n,$ and so $G^+\subseteq f(F^+).$ This also implies that $f$ is onto since $G$ is directed. To show that $f$ is injective, let $x\in F$ be such that $f(x)=0.$ Write $x$ as $f_n(y_n)$ for some $y_n\in G_n.$ Since $g_n(y_n)=ff_n(y_n)=f(x)=0,$ $y_n\in \ker g_n=\ker g_{n(n+1)}.$ Thus $x=f_n(y_n)=f_{n+1}g_{n(n+1)}(y_n)=f_{n+1}(0)=0.$ This shows that $f$ is an isomorphism of $\OG_{\Gamma}.$

To show (2), let $u$ be an order-unit of $G,$ let $H_n, n\in \omega,$ be simplicial $\Gamma$-groups, and let $h_{nm},$ $n<m,$ and $h_n$ be morphisms which exist by part (1) in the category $\OG_\Gamma$. Since $u\in G^+,$ there is $n_0$ and $u_{n_0}\in H_{n_0}$ such that $u=h_{n_0}(u_{n_0}).$ Since $G$ is a direct limit of $(H_n, h_{nm}),$ with $n_0\leq n<m,$ without loss of generality we can assume that $n_0=1.$ Let $u_n=h_{1n}(u_1)$ so that $u=h_1(u_1)=h_nh_{1n}(u_1)=h_n(u_n)$ for every $n>1$ and $u_n=h_{1n}(u_1)=h_{kn}h_{1k}(u_1)=h_{kn}(u_k)$ for every $1<k<n.$ 

Let $G_n=\{x\in H_n \,|\, -au_n\leq x\leq au_n\mbox{ for }a\in \Zset^+[\Gamma]\}$ be the convex $\Gamma$-subgroup of $H_n$ generated by $\{u_n\}$ so that $(G_n, u_n)$ is an object of $\OG^u_\Gamma.$ The group $G_n$ is directed since for $x,y\in G_n$ with $-au_n\leq x\leq au_n$ and $-bu_n\leq y\leq bu_n$ for some $a,b\in \Zset^+[\Gamma],$ $x,y\leq(a+b)u_n\in G_n.$ So $G_n$ is a $\Gamma$-ideal of $H_n.$ By Proposition \ref{ideals_in_simplicial}, $G_n$ is a simplicial $\Gamma$-group with a simplicial $\Gamma$-basis stabilized by $\Delta$. Applying the order-preserving map $h_{n(n+1)}$ to the relation $-au_n\leq x\leq au_n,$ we obtain that $-au_{n+1}\leq h_{n(n+1)}(x)\leq au_{n+1}$ so we have that $h_{n(n+1)}(G_n)\subseteq G_{n+1}.$ Let $g_{n(n+1)}$ be the restriction of $h_{n(n+1)}$ on $G_n.$ This produces a directed system of simplicial $\Gamma$-groups $((G_n, u_n), g_{nm}).$  

Let $g_n$ be the restriction of $h_n$ to $G_n.$ Clearly $\bigcup_{n\in\omega} g_n(G_n)\subseteq G$ and  $\bigcup_{n\in\omega} g_n(G_n^+)\subseteq G^+.$ If $x\in G^+,$ then $x=h_n(y_n)$ for some $y_n\in H_n^+$ and there is $a\in \Zset^+[\Gamma]$ such that $0\leq x\leq au.$ Hence  $0\leq h_n(y_n)\leq ah_n(u_n).$
This implies the existence of $m>n$ such that $0\leq h_{nm}(y_n)\leq ah_{nm}(u_n)=au_m.$ Then, $h_{nm}(y_n)\in G_m^+$ and so $x=h_mh_{nm}(y_n)=g_m(h_{nm}(y_n))\in  g_m(G_m^+).$ This shows that $\bigcup_{n\in\omega} g_n(G_n^+)= G^+.$ As $G$ is directed, this implies that $\bigcup_{n\in\omega} g_n(G_n)= G.$ Hence, $(G, u)$ is a direct limit of $((G_n, u_n), g_{nm})$ in $\OG_{\Gamma}^u.$ Note that the map $g_{nm}$ maps $[0, u_n]$ into $[0, u_m],$ that $g_n$ maps $[0, u_n]$ into $[0, u],$ and that $[0, u]=\bigcup_{n\in \omega}g_n([0, u_n]).$

To show (3), let $D$ be a generating interval of $G,$ let us index the elements of $D$ so that $D=\{d_n\, |\, n\in \omega\}$ with possible repetitions, and let $H_n, h_{nm},$ and $h_n$ be groups and morphisms which exist by part (1).  

For $v_1=d_1\in D,$ there are a positive integer $k_1$ and an element $u_1\in H_{k_1}^+$ such that $h_{k_1}(u_1)=v_1.$ Let $G_1$ be the convex $\Gamma$-subgroup of $H_{k_1}$ generated by $\{u_1\}$ and let $D_1$ be $[0, u_1]$ which is a generating interval of $G_1$ by Proposition \ref{generating_interval_of_simplicial}. Let $g_1$ be the restriction of $h_{k_1}$ on $G_1$ and let $v_2\in D$ be such that $v_1, d_2\leq v_2.$ There are a positive integer $m$ and $u_2'\in H_m^+$ such that $h_m(u_2')=v_2.$ Since $0\leq v_1=h_{k_1}(u_1)\leq v_2=h_m(u_2'),$ there is an integer $k_2\geq \max\{m,k_1\}$ such that $0\leq h_{k_1k_2}(u_1)\leq h_{mk_2}(u_2').$ Let $u_2=h_{mk_2}(u_2'),$ let $G_2$ be the convex $\Gamma$-subgroup of $H_{k_2}$ generated by $\{u_2\},$ let $D_2$ be $[0, u_2]$ (thus a generating interval by Proposition \ref{generating_interval_of_simplicial}), let $g_2$ be the restriction of $h_{k_2}$ on $G_2,$ and let $g_{12}$ be the restriction of $h_{k_1k_2}$ on $G_1.$ Let $v_3$ be the element of $D$ such that $v_2, d_3\leq v_3$ and continue the construction inductively. By the construction, the translational maps $g_n$ are such that $g_n(u_n)=v_n$ and the connecting maps $g_{nm}$ such that $g_{nm}(D_n)\subseteq D_m$ for $n<m$ so that both are morphisms of $\OG_\Gamma^D.$ Analogously to  part (2), we have that $G$ is a direct limit of $(G_n, g_{nm})$ and that $\bigcup_{n\in\omega}g_{n}(D_n)=\bigcup_{n\in \omega}[0, v_n]=D.$ Thus, $(G, D)$ is a direct limit of $((G_n, D_n), g_{nm})$ in $\OG_{\Gamma}^D.$

The last sentence of the theorem follows directly since the $\Zset_2$-action is trivial. 
\end{proof}

\begin{theorem}
Let $\Gamma$ be a group such that $\Zset[\Gamma]$ is Noetherian and let $G$ be a dimension $\Gamma$-group which satisfies (SDP$_\Delta$) for some $\Delta$ with $\Delta\subseteq \Stab(G)$. By replacing $\Delta$ with its normal closure in $\Gamma$, we can assume that $\Delta$ is normal.   
\begin{enumerate}
\item There are a directed system $I,$ simplicial $\Gamma$-groups $G_i, i\in I,$ with simplicial $\Gamma$-bases stabilized by $\Delta,$ a directed system $(G_i, g_{ij})$ of $\OG_\Gamma,$ and an isomorphism between $G$ and a direct limit of $(G_i, g_{ij})$ in $\OG_\Gamma.$
 
\item If $(G, u)$ is an object of $\OG_\Gamma^u,$ the directed system and the isomorphism from (1) can be found in $\OG_\Gamma^u.$

\item If $(G, D)$ is an object of $\OG^D_\Gamma$, the directed system and the isomorphism from (1) can be found in $\OG_\Gamma^D.$
\end{enumerate}
If $G$ is considered to be a $\Gamma$-$\Zset_2$-bigroup with the trivial $\Zset_2$-action, the directed systems and isomorphisms in the three parts can be found in the categories $\OG_\Gamma^*, \OG_\Gamma^{u*},$ and $\OG_\Gamma^{D*}$ respectively.     
\label{any_dim_gr}
\end{theorem}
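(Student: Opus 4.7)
The plan is to reduce Theorem \ref{any_dim_gr} to its countable counterpart, Theorem \ref{countable_dim_gr}, by expressing $G$ as the direct limit of its countable dimension sub-$\Gamma$-groups. As in the theorem statement, I assume $\Delta$ is already normal.

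The first key step is a L\"owenheim--Skolem style closure lemma: for any countable $S\subseteq G$, there is a countable $\Zset[\Gamma]$-submodule $H\subseteq G$ containing $S$ that is directed, convex in $G$, and satisfies (SDP$_\Delta$) with all witnesses drawn from $H$. The construction is iterative. Starting from $H_0=\Zset[\Gamma]S$, at stage $k$ adjoin (i) for every equation $\sum a_ix_i=0$ with $a_i\in\Zset[\Gamma]$, $x_i\in H_k^+$, a choice of $y_j\in G^+$ and $b_{ij}\in\Zset^+[\Gamma]$ provided by Definition \ref{SDP_Delta_definition}; (ii) a decomposition $x=x^+-x^-$ with $x^\pm\in G^+$ for every $x\in H_k$; and (iii) an element of $G$ between each pair $u\le v$ in $H_k$, to enforce convexity. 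Since $\Zset[\Gamma]$ is countable, each stage adds only countably many elements, so $H=\bigcup_k H_k$ is countable, closed under $\Gamma$, directed, convex in $G$, and satisfies (SDP$_\Delta$). Because $\Delta\subseteq\Stab(G)$, automatically $\Delta\subseteq\Stab(H)$, and the induced cone $H^+=G^+\cap H$ turns $H$ into a countable dimension $\Gamma$-group with inclusion $H\hookrightarrow G$ a morphism of $\OG_\Gamma$.

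Next, let $I$ be the family, ordered by inclusion, of all such countable dimension $\Gamma$-subgroups of $G$, further required to contain $u$ in case (2), or to meet $D$ in a generating interval $D\cap H$ of $H$ in case (3) (for this last refinement, augment the closure at each stage to adjoin the elements of $D$ and coefficients in $\Zset^+[\Gamma]$ expressing each $x\in H_k^+$ as a sum from $\Zset^+[\Gamma](D\cap H)$). The closure lemma shows $I$ is directed and that $G=\bigcup_{H\in I}H$ as an object in $\OG_\Gamma$, $\OG^u_\Gamma$, or $\OG^D_\Gamma$ respectively. Apply Theorem \ref{countable_dim_gr} to each $H\in I$ to obtain a sequence $(G_{H,n})_{n\in\omega}$ of simplicial $\Gamma$-groups with bases stabilized by $\Delta$ whose colimit is $H$ in the appropriate category. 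Finally, assemble these into a single directed system indexed by $J=\{(H,n)\mid H\in I,\,n\in\omega\}$ with $(H,n)\le(H',n')$ iff $H\subseteq H'$ and $n\le n'$; given such a comparison, iteratively apply the Shen criterion (Proposition \ref{telescoping}) to factor the composite $G_{H,n}\to H\hookrightarrow H'\to G$ through a suitable $G_{H',n'}$ after possibly enlarging $n'$, yielding the required connecting morphism. The direct limit over $J$ then recovers $G$ with the claimed properties.

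The main obstacle will be executing the closure lemma so that $\Gamma$-invariance (with $\Delta\subseteq\Stab(H)$), directedness and convexity in $G$, and witnesses for (SDP$_\Delta$) are simultaneously achieved within a single countable closure, and then arranging the connecting maps across $J$ coherently. The second task hinges on Proposition \ref{telescoping}, which itself requires the noetherian hypothesis; this is where the assumption on $\Zset[\Gamma]$ enters globally in the proof. The $\Gamma$-$\Zset_2$-bigroup variant follows at no extra cost once the $\Zset_2$-action on each simplicial piece is taken to be trivial, since the closure lemma and the reindexing preserve that structure.
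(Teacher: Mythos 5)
Your strategy -- reduce to the countable case via a L\"owenheim--Skolem closure and then reassemble -- is genuinely different from the paper's proof, which never passes through countable subgroups: it indexes the system by the family $\A$ of nonempty finite subsets of $G^+$, builds $G_A$ by induction on $|A|$ (applying the Shen criterion, Proposition \ref{telescoping}, to the map $\bigoplus_{B\subsetneq A}G_B\to G$), and gets the coherence relations $g_{BA}g_{CB}=g_{CA}$ for free from the construction. Your route could in principle work, but as written it has two gaps. The decisive one is the final assembly step. Proposition \ref{telescoping} does not let you ``factor the composite $G_{H,n}\to H\hookrightarrow H'\to G$ through a suitable $G_{H',n'}$'': it produces a \emph{new} simplicial $\Gamma$-group $G_2$ sitting between $G_1$ and $G$, not a factorization through a prescribed member of the approximating sequence of $H'$. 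What you actually need is a lifting lemma (that a positive $\Zset[\Gamma]$-map from a simplicial $\Gamma$-group with normal stabilizer $\Delta$ into a direct limit factors through a finite stage -- this does hold, using Proposition \ref{properties_of_simplicial}(2) and Lemma \ref{stabilizer_lemma} to see the lift is well defined), and, more seriously, a verification that the chosen lifts can be made to commute strictly over your index set $J$. Lifts are only unique up to eventual equality in the limit, your relation $(H,n)\le(H',n')$ compares indices $n,n'$ of unrelated sequences, and without a coherent choice (or a comma-category style reindexing) $(G_{H,n})_{J}$ is not a directed system in $\OG_\Gamma$. This is the crux of any ``limit of limits'' argument and it is not addressed.

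Two smaller points. First, your closure lemma asks for $H$ to be \emph{convex} in $G$, but adjoining one interpolant per pair $u\le v$ does not make $H$ convex (convexity requires all of the possibly uncountable interval $[u,v]_G$ to lie in $H$); fortunately convexity is not needed anywhere -- directedness, closure under (SDP$_\Delta$)-witnesses, and (in case (3)) closure under decompositions into $\Zset^+[\Gamma](D\cap H)$ suffice to make $H$ a countable dimension $\Gamma$-group and $D\cap H$ a generating interval -- so you should simply drop that requirement rather than try to enforce it. Second, you use that $\Zset[\Gamma]$ is countable without justification; this is true here (noetherianity of $\Zset[\Gamma]$ forces the maximal condition on subgroups of $\Gamma$, hence $\Gamma$ is finitely generated and countable), but it is a fact that must be stated and proved, since $\Gamma$ is a priori arbitrary.
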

\begin{proof}
If $G$ is zero, there is nothing to prove. Let us assume that $G$ is nonzero so that there is a nonzero element in $G$ and, since $G$ is directed, there is $0\neq x\in G^+.$ Assuming that $nx=0$ for some positive integer $n$ implies that $0\leq x\leq 2x\leq 3x\leq \ldots\leq nx=0$ which implies that $x=0$ since $G^+$ is strict. Hence, the elements $x, 2x, 3x,\ldots$ are different and so $G^+$ is infinite. 

The rest of the proof of (1) follows the proof of \cite[Theorem 3.19]{Goodearl_interpolation_groups_book} using results of this paper instead of the analogous results for the case $\Gamma=\{1\}.$ 

Let $\A$ be the family of all nonempty finite subsets of $G^+.$ Then $\A$ is directed under inclusion. For every $A\in\A,$ we construct a simplicial $\Gamma$-group $G_A$ with a simplicial $\Gamma$-basis stabilized by $\Delta$ and morphisms $g_A: G_A\to G$ and $g_{BA}: G_B\to G_A, B\subset A$ of $\OG_\Gamma$ such that the following four conditions hold: $g_{A}g_{BA}=g_B,$ $\ker g_B=\ker g_{BA},$ $g_{BA}g_{CB}=g_{CA}$ and $A\subseteq g_A(G_A^+)$ for all $C\subset B\subset A$.

If $A$ is a singleton $\{a\},$ let $G_A=\Zset[\Gamma/\Delta]$ and let $g_A$ be the extension of the map $\Delta\mapsto a$ to a morphism of $\OG_\Gamma$ which exists by the assumptions that $\Delta\subseteq \Stab(G)$ and that $a$ is in $G^+.$ Thus, $a=g_A(\Delta)\in g_A(G_A^+)$ and the first three conditions trivially hold since $B\subset A$ implies that $B=\varnothing.$   

Assuming that $G_A,$ $g_A,$ and $g_{BA},B\subset A,$ have been constructed for all sets $A, B\in \A$ of cardinality less than $n,$ let us consider $A\in \A$ of cardinality $n.$ Let $\B$ be the set of all nonempty and proper subsets of $A$ and let $G_\B=\bigoplus_{B\in\B} G_B$ which is a simplicial $\Gamma$-group with a simplicial $\Gamma$-basis stabilized by $\Delta.$ For $B\in\B,$ let $q_B$ be the natural injection $G_B\to G_\B.$ Let $g:G_\B\to G$ be a unique morphism of $\OG_\Gamma$ such that $g_B=gq_B.$ By Proposition \ref{telescoping}, there is a simplicial $\Gamma$-group $G_A$ with a simplicial $\Gamma$-basis stabilized by $\Delta$ and morphisms $g_{\B A}: G_\B\to G_A$ and $g_A: G_A\to G$ of $\OG_\Gamma$ such that $g=g_Ag_{\B A}$ and $\ker g_{\B A}=\ker g.$ For every $B\subset A,$ let $g_{BA}=g_{\B A}q_B.$ One checks that the above four conditions hold so that the collection  $\{(G_A, g_{BA})\, |\, A\in \A, B\subset A\}$ is a directed system.   
We list some more details for checking the third condition. 
Let $C\subset B\subset A.$ By the induction hypothesis, we have that $g_Bg_{CB}=g_C$ so that $gq_Bg_{CB}=g_Bg_{CB}=g_C=gq_C.$ Hence, $(q_Bg_{CB}-q_C)(G_C)\subseteq \ker g=\ker g_{\B A}.$ Consequently, $g_{\B A}q_Bg_{CB}=g_{\B A}q_C$ and so $g_{BA}g_{CB}=g_{\B A}q_Bg_{CB}=g_{\B A}q_C=g_{CA}.$

Let $H$ be a direct limit of the collection $\{(G_A, g_{BA})\, |\, A\in \A, B\subset A\}$ in $\OG_\Gamma$ (which exists by Proposition \ref{direct_limits}).
Let $h_A,$ for $A\in \A,$ be the translational maps and let $h: H\to G$ be a unique morphism of $\OG_\Gamma$ such that $hh_A=g_A.$ For any $A\in\A,$ $A\subseteq g_A(G_A^+)=hh_A(G_A^+)\subseteq h(H^+)$ so $G^+\subseteq h(H^+)$ and the converse holds since $h$ is order-preserving. Hence $G^+=h(H^+).$ This implies that $h$ is onto as $G$ is directed. 
If $h(x)=0$ for some $x\in H,$ let $x=h_B(x_B)$ for some $B\in \A$ and $x_B\in G_B.$ Then $g_B(x_B)=hh_B(x_B)=h(x)=0.$ Since $G^+$ is infinite, there is $A\supset B$ such that $g_{BA}(x_B)=0$ by the second condition. Thus, $x=h_B(x_B)=h_Ag_{BA}(x_B)=h_A(0)=0$ showing that $h$ is injective. 

The proofs of parts (2) and (3) are analogous to the proofs of parts (2) and (3) of Theorem \ref{countable_dim_gr}. We list some more details for the proof of (3).  Let us index the elements of $D$ by the directed set $\A$ so that $D=\{d_A\, |\, A\in \A\}$ with possible repetitions and let $H_A, h_A,$ and $h_{AB}$ be groups and morphisms which exist by part (1).  

We use induction on the number of elements of the sets $A\in\A$ to construct simplicial $\Gamma$-groups $G_A,$ connecting maps $g_{BA}$ and translational maps $g_A$ as required. If $A=\{a\},$ let $v_A=d_A\in D.$ There are $B_A\in \A$ and $u_A\in H_{B_A}^+$ such that $h_{B_A}(u_A)=v_A.$ Let $G_A$ be the convex $\Gamma$-subgroup of $H_{B_A}$ generated by $\{u_A\}$, let $D_A$ be $[0, u_A]$ (which is a generating interval of $G_A$ by Proposition \ref{generating_interval_of_simplicial}), and let $g_A$ be the restriction of $h_{B_A}$ on $G_A.$ Assume that simplicial $\Gamma$-groups $G_A,$ elements $v_A\in D$ such that $v_B, d_A\leq v_A$ for $B\subset A,$ and order-units $u_A\in G_A^+$ are defined for every $A\in \A$ with less than $n$ elements and are such that $G_A$ is a convex subset of $H_{B_A}$ for some $B_A\in\A$ and that $g_A,$ and $g_{CA}$ are the restrictions of $h_{B_A}$ and $h_{B_C B_A}$ for $C\subset A,$  $g_A(u_A)=v_A,$ $g_{CA}(u_C)\leq u_A$ for $C\subset A.$ 

Let us consider a set $A\in \A$ with $n$ elements. Let $\B$ be the set of all proper and nonempty subsets of $A.$ Since $D$ is upwards directed, there is $v_A\in D$ such that $v_A\geq v_B$ for all $B\in\B$ and $v_A\geq d_A.$ For $v_A,$ there is $C\in \A$ and $u_C\in G_C^+$ such that $h_C(u_C)=v_A.$ For any $B'\in \B,$ $0\leq v_{B'}=g_{B'}(u_{B'})=h_{B_{B'}}(u_{B'})\leq v_A=h_C(u_C).$ Let $B_A\in \A$ be larger than all $B_{B'}$ for $B'\in \B$ and $C$ and such that 
$0\leq h_{B_{B'} B_A}(u_{B'})\leq h_{C B_A}(u_C).$ Define $u_A$ as $h_{C B_A}(u_C)$ and $G_A$ as the convex $\Gamma$-subgroup of $H_{B_A}$ generated by $\{u_A\}$. 
Let $D_A$ be $[0, u_A]$ (which is a generating interval again by Proposition \ref{generating_interval_of_simplicial}), let $g_A$ be the restriction of $h_{B_A}$ to $G_A$ and let $g_{CA}$ be the restriction of $h_{B_C B_A}$ to $G_A$ for any $C\in \B.$ 
By the construction, the translational maps $g_A$ are such that $g_A(u_A)=h_{B_A}(h_{C B_A}(u_C))=h_C(u_C)=v_A$ and the connecting maps $g_{CA}$ are such that $g_{CA}(D_C)\subseteq D_A$ for $C\subset A$ since $0\leq h_{B_{C} B_A}(u_{C})\leq u_A.$ Thus $\bigcup_{A\in\A}g_A(D_A)=\bigcup_{A\in \A}[0, v_A]=D$ and $\bigcup_{A\in \A}g_A(G_A^+)=G^+$ by construction and so $(G, D)$ is a direct limit of $((G_A, D_A), g_{BA})$ in $\OG_{\Gamma}^D.$

The last sentence of the theorem follows directly since the $\Zset_2$-action is trivial. 
\end{proof}

\subsection{Realization of dimension \texorpdfstring{$\Gamma$}{TEXT}-groups for \texorpdfstring{$\Gamma$}{TEXT} with  \texorpdfstring{$\Zset[\Gamma]$}{TEXT} Noetherian}\label{subsection_realization}

The statement that every countable dimension group can be realized as a $K_0$-group of an ultramatricial algebra over a field generalizes the Effros-Handelman-Shen Theorem for $C^*$-algebras. In this section, we show the generalization of this classic result for dimension $\Gamma$-groups if $\Gamma$ is such that $\Zset[\Gamma]$ is Noetherian.

\begin{theorem} (Dimension $\Gamma$-group Realization)
Let $\Gamma$ be a group such that $\Zset[\Gamma]$ is Noetherian. If $G$ is a countable dimension $\Gamma$-group satisfying (SDP$_\Delta$) for some $\Delta\subseteq\Stab(G)$, then $G$ is an ultrasimplicial $\Gamma$-group and, as such, realizable by a $\Gamma$-graded ultramatricial ring over a graded division ring $K$ with $\Gamma_K$ equal to the normalizer of $\Delta.$

In the case when $\Gamma$ is abelian (thus $\Gamma$ is a finitely generated abelian group), and $G$ is considered to be a $\Gamma$-$\Zset_2$-bigroup with the trivial $\Zset_2$-action, then $G$ is realizable by a $\Gamma$-graded ultramatricial $*$-algebra over a graded $*$-field $K$ with $\Gamma_K=\Delta.$
\label{realization_dim_gr}
\end{theorem}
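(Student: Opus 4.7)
The plan is to obtain this theorem essentially as a concatenation of Theorem \ref{countable_dim_gr} (which produces a countable simplicial-group telescope for $G$) and Theorem \ref{realization_ultrasimplicial} (which realizes any ultrasimplicial $\Gamma$-group as $K_0^\Gamma$ of a graded ultramatricial ring), with a short preliminary normalization step.

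First I would reduce to the case that the stabilizing subgroup $\Delta$ is normal. Let $\ol\Delta$ denote the normal closure of $\Delta$ in $\Gamma$. Since $\Stab(G)$ is automatically a normal subgroup of $\Gamma$, the hypothesis $\Delta\subseteq\Stab(G)$ implies $\ol\Delta\subseteq\Stab(G)$. Moreover, as observed in the paragraph following Definition \ref{SDP_Delta_definition}, whenever $G$ satisfies (SDP$_{\Delta}$) it satisfies (SDP$_{\Delta'}$) for every subgroup $\Delta'\supseteq\Delta$; in particular $G$ satisfies (SDP$_{\ol\Delta}$). Thus we may assume from the outset that $\Delta$ is normal, which is exactly the setup of Theorem \ref{countable_dim_gr}.

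Next I would apply Theorem \ref{countable_dim_gr} (part (2) if $G$ has a distinguished order-unit, part (3) if it has a distinguished generating interval). This produces a countable sequence of simplicial $\Gamma$-groups $G_n$ whose simplicial $\Gamma$-bases are stabilized by $\ol\Delta$, together with an isomorphism of $G$ with the direct limit of the $G_n$ in $\OG_\Gamma^u$ or $\OG_\Gamma^D.$ By Definition \ref{definition_ultrasimplicial} this is precisely the statement that $G$ is an ultrasimplicial $\Gamma$-group. I would then feed this directed sequence into Theorem \ref{realization_ultrasimplicial}: part (1) or (2), as appropriate, yields a $\Gamma$-graded division ring $K$ with $\Gamma_K=\ol\Delta$, a $\Gamma$-graded ultramatricial ring $R$ over $K$, and an isomorphism of $G$ with $K_0^\Gamma(R)$ in the relevant category. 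This completes the first assertion.

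For the abelian bigroup case, note that when $\Gamma$ is abelian every subgroup is automatically normal, so $\ol\Delta=\Delta$ and no replacement is needed. I would run the same argument but in the starred categories: the bigroup version of Theorem \ref{countable_dim_gr} (the final sentence of its statement) places the limit in $\OG_\Gamma^{u\ast}$ or $\OG_\Gamma^{D\ast}$, and then Theorem \ref{realization_ultrasimplicial_star} produces a graded $*$-field $K$ with a unitary in each nonzero component and $\Gamma_K=\Delta$, together with a $\Gamma$-graded ultramatricial $*$-algebra realization of $G$. The hypothesis that $\Zset[\Gamma]$ is noetherian is exactly the conclusion, via Hall's theorem, that any finitely generated abelian $\Gamma$ is permissible.

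The substantive work has already been done in the apparatus leading up to this theorem, so the main conceptual point here is simply verifying that the hypotheses of Theorems \ref{countable_dim_gr} and \ref{realization_ultrasimplicial} match up correctly after the passage from $\Delta$ to $\ol\Delta$; there is no hidden obstacle at this stage. The real technical heart of the argument is hidden in the proof of Proposition \ref{telescoping}, where the noetherian hypothesis on $\Zset[\Gamma]$ is used to guarantee that the kernels appearing in the inductive telescope construction are finitely generated, so that (SDP$_{\ol\Delta}$) can actually be applied a finite number of times to annihilate them.
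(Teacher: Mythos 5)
Your proposal is correct and follows essentially the same route as the paper, which likewise obtains the result by concatenating Theorem \ref{countable_dim_gr} with Theorem \ref{realization_ultrasimplicial} (and Theorem \ref{realization_ultrasimplicial_star} for the bigroup case), after the standard replacement of $\Delta$ by its normal closure. Note only that what your argument (and the cited theorems) actually delivers is $\Gamma_K=\ol\Delta$, the normal closure of $\Delta$, so the word ``normalizer'' in the statement should be read as ``normal closure.''
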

\begin{proof}
The first part of the theorem follows directly from Theorems \ref{countable_dim_gr} and \ref{realization_ultrasimplicial}. The last sentence follows from Theorems \ref{countable_dim_gr} and \ref{realization_ultrasimplicial_star}.
\end{proof}

We obtain a direct corollary of Proposition \ref{ultrasimplicial_is_dimension}, Corollary \ref{dir_lim_corollary}, and Theorem \ref{realization_dim_gr} as follows.
\begin{corollary}
Let $\Gamma$ be a group such that $\Zset[\Gamma]$ is Noetherian and let $(G, D)$ be an object of $\POG_\Gamma^D.$ Then 
$(G, D)\cong (K_0^{\Gamma}(R), D_R)$ for some $\Gamma$-graded ultramatricial ring $R$ over a graded division ring $K$ with $\Gamma_K$ normal if and only if $G$ is a countable dimension $\Gamma$-group. 
\label{realization_corollary}
\end{corollary}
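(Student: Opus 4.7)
The plan is to deduce this corollary by straightforward combination of the three results cited just before the statement: Proposition \ref{ultrasimplicial_is_dimension}, Corollary \ref{dir_lim_corollary}, and Theorem \ref{realization_dim_gr}. The corollary is essentially a ``closure of the loop'' between countable dimension $\Gamma$-groups and Grothendieck $\Gamma$-groups of graded ultramatricial rings with normal support, and no new technical work should be required beyond aligning the hypotheses on the stabilizer/support.

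For the ``only if'' direction, I would start with an isomorphism $(G,D) \cong (K_0^{\Gamma}(R), D_R)$ in $\POG_\Gamma^D$ for a graded ultramatricial ring $R$ over a graded division ring $K$ with $\Gamma_K$ normal. By the description of simplicial $\Gamma$-bases of $K_0^\Gamma$ of a graded matricial ring over $K$ from section \ref{Grothendieck_of_matricial}, each matricial approximating ring $R_n$ has the property that $K_0^\Gamma(R_n)$ is a simplicial $\Gamma$-group with a basis stabilized by $\Gamma_K$. Then Corollary \ref{dir_lim_corollary}(2) gives that $(K_0^\Gamma(R), D_R)$ is an ultrasimplicial $\Gamma$-group realized as a direct limit in $\OG_\Gamma^D$ of such simplicial groups, all with the same normal stabilizer $\Gamma_K$. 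Proposition \ref{ultrasimplicial_is_dimension} then applies and yields that $K_0^\Gamma(R)$, hence $G$, is a countable dimension $\Gamma$-group.

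For the ``if'' direction, I would start with $G$ a countable dimension $\Gamma$-group. By Definition \ref{dimension_definition}, $G$ satisfies (SDP$_\Delta$) for some subgroup $\Delta \subseteq \Stab(G)$. Replacing $\Delta$ by its normal closure $\overline{\Delta}$ in $\Gamma$, one still has $\overline{\Delta} \subseteq \Stab(G)$ since $\Stab(G)$ is a normal subgroup of $\Gamma$, and the remark following Definition \ref{SDP_Delta_definition} shows that (SDP$_{\overline{\Delta}}$) still holds. Applying Theorem \ref{realization_dim_gr} with this normalized $\Delta$ then yields a graded ultramatricial ring $R$ over a graded division ring $K$ with $\Gamma_K = \overline{\Delta}$ normal and an isomorphism $(G,D) \cong (K_0^\Gamma(R), D_R)$ in $\OG_\Gamma^D$, which is in particular a morphism in $\POG_\Gamma^D$.

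The only subtlety that needs care is bookkeeping between the various categories $\POG_\Gamma^D$, $\OG_\Gamma^D$, and $\OG_\Gamma^u$, and making sure that the ``normalizer of $\Delta$'' language in Theorem \ref{realization_dim_gr} is interpreted as normal closure so that $\Gamma_K$ is actually normal in $\Gamma$; once that is granted, there is no genuine obstacle here, as both directions are immediate applications of cited results.
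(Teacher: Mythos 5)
Your proof is correct and follows essentially the same route as the paper, which deduces the forward direction from Corollary \ref{dir_lim_corollary} together with Proposition \ref{ultrasimplicial_is_dimension} and the converse from Theorem \ref{realization_dim_gr}. Your reading of the support condition in Theorem \ref{realization_dim_gr} as the normal closure of $\Delta$ (so that $\Gamma_K$ is indeed normal) is the interpretation consistent with the proofs of Theorems \ref{countable_dim_gr} and \ref{realization_ultrasimplicial}.
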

\begin{proof}
One direction is a direct corollary of Proposition \ref{ultrasimplicial_is_dimension} and Corollary \ref{dir_lim_corollary}. The converse follows directly from Theorem \ref{realization_dim_gr}. 
\end{proof}

Since every finitely generated abelian group $\Gamma$ is such that $\Zset[\Gamma]$ is Noetherian and each of its subgroups is normal, Proposition \ref{ultrasimplicial_is_dimension} and Theorem \ref{realization_dim_gr} imply the following corollary. 

\begin{corollary}
Let $\Gamma$ be a finitely generated abelian group. The class of ultrasimplicial $\Gamma$-groups and the class of countable  dimension $\Gamma$-groups coincide. 
\label{realization_corollary_abelian}
\end{corollary}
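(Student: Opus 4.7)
The plan is to verify the two inclusions separately, using two key facts about finitely generated abelian $\Gamma$: first, every subgroup of $\Gamma$ is automatically normal, and second, by Hall's theorem (cited in the paper in the paragraph preceding Proposition \ref{telescoping}), $\Zset[\Gamma]$ is noetherian because finitely generated abelian groups are polycyclic-by-finite. These two properties let us apply the previously established structural results without any extra hypotheses.

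For the inclusion that every ultrasimplicial $\Gamma$-group is a countable dimension $\Gamma$-group, the argument is immediate from Proposition \ref{ultrasimplicial_is_dimension}. Indeed, an ultrasimplicial $\Gamma$-group is by Definition \ref{definition_ultrasimplicial} a direct limit of a sequence of simplicial $\Gamma$-groups whose simplicial $\Gamma$-bases are all stabilized by a common subgroup $\Delta$ of $\Gamma$. Since $\Gamma$ is abelian, $\Delta$ is normal, so Proposition \ref{ultrasimplicial_is_dimension} gives precisely that such a direct limit is a countable dimension $\Gamma$-group (the countability coming from the fact that a sequence of finitely generated simplicial groups has a countable direct limit).

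For the reverse inclusion, the noetherian hypothesis is what one invokes. Let $G$ be a countable dimension $\Gamma$-group. By Definition \ref{dimension_definition}, $G$ satisfies (SDP$_\Delta$) for some subgroup $\Delta$ of $\Gamma$ with $\Delta\subseteq\Stab(G)$. Since $\Zset[\Gamma]$ is noetherian, Theorem \ref{realization_dim_gr} applies to $G$ and yields an isomorphism of $G$ with the Grothendieck $\Gamma$-group of a $\Gamma$-graded ultramatricial ring over a graded division ring; but the proof of that theorem factors through Theorem \ref{countable_dim_gr}, which expresses $G$ as a direct limit of a countable sequence of simplicial $\Gamma$-groups with simplicial $\Gamma$-bases stabilized by the (already normal) subgroup $\Delta$. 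This is exactly the definition of an ultrasimplicial $\Gamma$-group.

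There is no real obstacle here: the whole content of the corollary is that for finitely generated abelian $\Gamma$, the two side conditions appearing in the general results, namely normality of $\Delta$ and noetherianity of $\Zset[\Gamma]$, both hold automatically, so the two structural theorems collapse the two classes together. The only mild subtlety worth flagging is that in the forward direction one should observe that the common basis stabilizer $\Delta$ in Definition \ref{definition_ultrasimplicial} is the right subgroup to feed into Proposition \ref{ultrasimplicial_is_dimension}, and in the reverse direction one should note that we have to extract the direct-limit presentation out of the proof of Theorem \ref{realization_dim_gr} (equivalently, apply Theorem \ref{countable_dim_gr} directly) rather than just the realization statement itself.
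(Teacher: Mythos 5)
Your proof is correct and follows essentially the same route as the paper, which derives the corollary in one line from Proposition \ref{ultrasimplicial_is_dimension} (forward direction, using that every subgroup of an abelian group is normal) and Theorem \ref{realization_dim_gr} (reverse direction, using that $\Zset[\Gamma]$ is noetherian by Hall's theorem). The only cosmetic remark is that the statement of Theorem \ref{realization_dim_gr} already asserts that $G$ \emph{is} an ultrasimplicial $\Gamma$-group, so you need not dig into its proof or invoke Theorem \ref{countable_dim_gr} separately.
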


\section{Some remarks and open problems}\label{section_open_problems}

\subsection{The Realization Problem} The Realization (or representation) Problem for monoids is asking for a description of all monoids which are isomorphic to $\V(R)$ for some von Neumann regular ring $R$. The survey article \cite{Ara_realization} contains a comprehensive overview of this and several related open problems. The Realization Problem for groups is asking for a description of all abelian groups realizable by a von Neumann regular ring.   

The Realization Problem is known to hold for large classes of monoids. The classic results include realization of countable dimension monoids, i.e. the positive cones of dimension groups or, equivalently, cancellative, refinement (the Riesz refinement property holds), unperforated, conical ($x+y=0 \Rightarrow x=y=0$) monoids. Any such countable monoid can be realized by an ultramatricial algebra over a field. The assumption that the monoid is countable here is relevant by an example from \cite{Wehrung} of a dimension monoid of size $\aleph_2$  which cannot be realized by a regular ring. More recently, graph monoids, monoids with simple presentations in terms of vertices of directed graphs, have been shown to be realizable (see \cite{Ara_Brustenga} and \cite{Ara_Goodearl}).    
In \cite{Ara_Pardo}, Ara and Pardo show that graph monoids appear more generally than it may seem from their definition: by \cite[Theorem 3.6]{Ara_Pardo}, any finitely generated conical regular ($2x\leq x$ for all $x$) refinement monoid is isomorphic to a graph monoid of a countable row-finite graph. 
Subsequently, in \cite{Ara_Bosa_Pardo}, Ara, Bosa, and Pardo settle the Realization Problem for finitely generated conical refinement monoids by showing that every such monoid is realizable by a regular ring. 

In the context of this paper, we generalize the Realization Problem for groups to the case when a group $\Gamma$ acts on an abelian group $G$ so that the classic Realization Problem corresponds to the case when $\Gamma$ is trivial. Recall that a $\Gamma$-graded ring $R$ is {\em graded regular} if $x\in xRx$ for every homogeneous element $x$. We state the modified Realization Problem as follows. 

{\bf $\mathbf{\Gamma}$-Realization Problem.}  Describe all abelian $\Gamma$-groups which are isomorphic to $K_0^{\Gamma}(R)$ for some $\Gamma$-graded von Neumann regular ring $R.$

If $R$ is a graded ring which is regular it is automatically graded regular. Moreover, since every regular ring can be trivially graded by any group, the class of graded regular rings is larger than the class of regular rings. In addition, every abelian group can be made into a $\Gamma$-group by considering the trivial action of $\Gamma.$ Thus, a solution of the $\Gamma$-Realization Problem will also solve the Realization Problem. 

Theorem \ref{realization_dim_gr} shows that if $\Gamma$ is such that $\Zset[\Gamma]$ is Noetherian, then every countable dimension $\Gamma$-group can be realized by a $\Gamma$-graded ultramatricial ring $R$ over a graded division ring. Note that such ring $R$ is graded regular. This brings us to our first question. 

\begin{question} Can every countable dimension $\Gamma$-group be realized by a $\Gamma$-graded ultramatricial ring over a graded division ring even if $\Zset[\Gamma]$ is not necessarily Noetherian? 
\label{question_Noetherian}
\end{question}

Every dimension $\Gamma$-group can be considered as a $\Gamma$-$\Zset_2$-bigroup with the trivial $\Zset_2$-action. If such $G$ is countable and $\Gamma$ is finitely generated abelian, $G$ can be realized by a $\Gamma$-graded ultramatricial $*$-algebra over a graded $*$-field by Theorem \ref{realization_dim_gr}. However, a countable dimension $\Gamma$-group $G$ can be equipped with some nontrivial $\Zset_2$-action making it into a $\Gamma$-$\Zset_2$-bigroup. In this case, we ask whether it can also be realized by a graded $*$-ring. In particular, since the involutive version of a regular ring is a $*$-regular ring (every principal right ideal is generated by a self-adjoint idempotent) and a graded $*$-regular ring is defined analogously by replacing ``ideal'' by ``graded ideal'' and ``idempotent'' by ``homogeneous idempotent'', we pose the following question. 

\begin{question} Can every countable dimension $\Gamma$-$\Zset_2$-bigroup with a nontrivial $\Zset_2$-action be realized by a $\Gamma$-graded $*$-regular ring? 
\end{question}

\subsection{The Grothendieck \texorpdfstring{$\Gamma$}{TEXT}-group of a smash product}\label{subsection_smash}

Generalizing some results of \cite{Cohen_Montgomery},  Ara, Hazrat, Li, and Sims (\cite{Ara_et_al_Steinberg}) showed that the category of graded left modules over a $\Gamma$-graded ring $R$ is isomorphic to the category of left modules of the smash product $R\#\Gamma.$ The ring $R\#\Gamma$ is defined by introducing new symbols $p_\gamma$ for $\gamma\in \Gamma$ and letting the finite sums of the form $\sum r p_\gamma$ be added component-wise and multiplied by the rule $rp_\gamma sp_\delta = rs_{\gamma\delta^{-1}} p_\delta$ where $s_{\gamma\delta^{-1}}\in R_{\gamma\delta^{-1}}$ denotes the term in the representation of $s$ as a sum of homogeneous terms. In \cite{Cohen_Montgomery}, Cohen and Montgomery studied the case when $\Gamma$ is finite and $R$ is an algebra over a field $K.$ The consideration of the smash product in \cite{Cohen_Montgomery} was motivated by the fact that there is a bijective correspondence of $\Gamma$-gradings on $R$ and $R\# \Hom_K(K[\Gamma], K)$-module structures of $R$. This correspondence has been established by considering $p_\gamma, \gamma\in \Gamma,$ to be the elements of the basis of $\Hom_K(K[\Gamma], K)$ dual to the standard basis $\{\gamma\, |\,\gamma\in\Gamma\}$ of $K[\Gamma].$ The notation  $R\#\Gamma$ of \cite{Ara_et_al_Steinberg} corresponds to $R\#\Hom_K(K[\Gamma], K)$ of \cite{Cohen_Montgomery}.

Although the ring $R\#\Gamma$ can also be graded by $\Gamma$ (by $(R\#\Gamma)_\gamma=\sum_\delta R_\gamma p_\delta$), the isomorphism of the categories from \cite{Ara_et_al_Steinberg} is a way to consider the action of $\Gamma$ on the standard $K_0$-group of a ring instead of on the Grothendieck $\Gamma$-group of a graded ring. Thus, the results of this paper on graded rings can be formulated in terms of the module structure over the smash product. 

For example, in the case when $\Gamma$ is $\Zset$ and $R$ is a Leavitt path algebra $L_K(E)$ of a graph $E$ over a field $K,$ $L_K(E)$ is naturally graded by $\Zset$ and there is a $\Zset$-monoid isomorphism $\V^{\Zset}(L_K(E))\cong \V(L_K(E)\#\Zset)$ by \cite[Corollary 5.3 and Proposition 5.7]{Ara_et_al_Steinberg}. This isomorphism induces an order-preserving $\Zset$-group isomorphism $$K_0^{\Zset}(L_K(E))\cong K_0(L_K(E)\#\Zset).$$
The graph $C^*$-algebra of the skew-product graph $E\times_1\Zset$ (or the covering $\ol E$ of \cite{Ara_et_al_Steinberg}) has the Grothendieck group with a natural $\Zset$-group structure since the 
inclusion $L_\Cset(E\times_1\Zset)\to C^*(E\times_1\Zset)$ induces an isomorphism of $K_0$-groups and, by  \cite[Corollary 5.3 and Proposition 5.7]{Ara_et_al_Steinberg}, $$K_0(C^*(E\times_1\Zset))\cong K_0(L_\Cset(E\times_1\Zset))\cong K_0^\Zset(L_\Cset(E)).$$  

\subsection{The converse of Propositions \ref{unperforated} and \ref{interpolation}}

If $\Gamma$ is trivial, a directed and ordered group $G$ satisfies (SDP) if and only if it is a dimension group, i.e. it is unperforated and satisfies the interpolation property, by  \cite[Proposition 3.15]{Goodearl_interpolation_groups_book} or \cite[Lemma IV.7.1]{Davidson}. The proof fundamentally relies on the fact that every element of $\Zset=\Zset[\{1\}]$ is either positive or negative, a fact which is not necessarily true for the order on $\Zset[\Gamma]$ for nontrivial $\Gamma.$ By Propositions \ref{unperforated} and \ref{interpolation}, every dimension $\Gamma$-group is ordered, directed, unperforated with respect to some subgroup of $\Gamma,$ and has the interpolation property. We ask whether a $\Gamma$-group with these four properties is necessarily a dimension $\Gamma$-group. 

\begin{question}
If $\Gamma$ is a group with a subgroup $\Delta$ and $G$ is an ordered and directed $\Gamma$-group which satisfies the interpolation property and which is unperforated with respect to $\Delta,$ does $G$ satisfy (SDP$_{\Delta'}$) for some subgroup $\Delta'$? If so, can $\Delta'$ be found such that $\Delta'\subseteq \Stab(G)$? 
\label{question_converse}
\end{question}

\subsection{Weakening the assumptions}

We ask whether the assumption that $\Zset[\Gamma]$ is Noetherian can be weakened or deleted from the results of this paper which assume this condition. For example, we ask whether Theorem \ref{countable_dim_gr} is true (and, consequently, Theorems \ref{any_dim_gr} and \ref{realization_dim_gr}) if the assumption that $\Zset[\Gamma]$ is Noetherian is weakened. 

\appendix

\section{Faithfulness and the Classification Theorem}\label{appendix_ultramatricial}

If $\Gamma$ is an abelian group and $K$ a $\Gamma$-graded field, \cite[Theorem 5.2.4]{Roozbeh_book} states that the functor $K_0^\Gamma$ classifies $\Gamma$-graded ultramatricial $K$-algebras. We show that this result holds without assuming that $\Gamma$ is abelian (Theorem \ref{classification}). In addition, the connecting maps in \cite[Theorem 5.2.4]{Roozbeh_book} are assumed to be unit-preserving while we do not make this assumption.

Analogously to the classic proof (\cite[Theorem 15.26]{Goodearl_book}), the proof of \cite[Theorem 5.2.4]{Roozbeh_book} uses the intertwining method and \cite[Theorem 5.1.3]{Roozbeh_book} stating that $K_0^\Gamma$ is a full and faithful functor on an appropriate category. If $\Gamma$ is any group and $K$ a $\Gamma$-graded field, Proposition \ref{fullness} asserts the fullness of the functor $K_0^\Gamma$ on the category of graded matricial $K$-algebras (if $K$ is commutative, the map $e_{kl}^i\mapsto g_{kl}^i$ from the proof extends to a graded {\em algebra} homomorphism, not just to a graded {\em ring} homomorphism).

We turn to faithfulness next and show Proposition \ref{faithfulness} stating that the function $K_0^\Gamma$ is faithful on 
the category obtained from the category of graded matricial $K$-algebras by identifying the graded inner automorphism with the identity map. Thus, Propositions \ref{fullness} and \ref{faithfulness} parallel \cite[Theorem 5.1.3]{Roozbeh_book}. Further, \cite[Theorem 3.4]{Roozbeh_Lia}
is the involutive version of \cite[Theorem 5.1.3, part (1)]{Roozbeh_book}: it states that if $\Gamma$ is abelian and $K$ has an involution which satisfies some additional conditions, then the appropriately modified functor is faithful. 

\begin{proposition} (Faithfulness)
Let $\Gamma$ be any group, let $K$ be a $\Gamma$-graded field, let $R$ and $S$ be graded matricial $K$-algebras, and let $\phi, \psi: R\to S$ be graded algebra homomorphisms (not necessarily unit-preserving). The following are equivalent. 
\begin{enumerate}[\upshape(1)]
\item $K^{\Gamma}_0(\phi)=K^{\Gamma}_0(\psi)$

\item There exists an invertible element $u\in S_{1_\Gamma}$ such that $\phi(r)=u\psi(r)u^{-1},$ for all $r\in R.$  
\end{enumerate}  
\label{faithfulness}
\end{proposition}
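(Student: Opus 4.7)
I would first dispose of the easy direction $(2)\Rightarrow(1)$: given an invertible $u\in S_{1_\Gamma}$ with $\phi(r)=u\psi(r)u^{-1}$, left multiplication by $u$ is a degree-preserving automorphism of every free graded right $S$-module $S^n$, and it restricts to a graded isomorphism $\psi(p)S^n\cong_{\gr}u\psi(p)u^{-1}S^n=\phi(p)S^n$ for each homogeneous idempotent matrix $p$ over $R$, so $K_0^\Gamma(\phi)=K_0^\Gamma(\psi)$ on generators.

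For $(1)\Rightarrow(2)$, I would decompose $R=\bigoplus_{i=1}^n\M_{p(i)}(K)(\gamma_{i1},\ldots,\gamma_{ip(i)})$ and work with the standard graded matrix units $e^i_{kl}$, which are homogeneous of degree $\gamma_{ik}^{-1}\gamma_{il}$. Put $f^i_{kl}:=\phi(e^i_{kl})$, $g^i_{kl}:=\psi(e^i_{kl})$, $f:=\phi(1_R)=\sum_{i,k}f^i_{kk}$, $g:=\psi(1_R)=\sum_{i,k}g^i_{kk}$; these are two orthogonal systems of matrix units in $S$. The hypothesis $K_0^\Gamma(\phi)=K_0^\Gamma(\psi)$ forces $[f^i_{11}S]=[g^i_{11}S]$ and $[(1_S-f)S]=[(1_S-g)S]$ in $K_0^\Gamma(S)$, so Lemma~\ref{iso_carries_from_K0_for_ultramatricial_also} produces homogeneous elements $a^i_1,b^i_1,c,d\in S_{1_\Gamma}$ with $a^i_1=f^i_{11}a^i_1g^i_{11}$, $b^i_1=g^i_{11}b^i_1f^i_{11}$, $a^i_1b^i_1=f^i_{11}$, $b^i_1a^i_1=g^i_{11}$, $c=(1_S-f)c(1_S-g)$, $d=(1_S-g)d(1_S-f)$, $cd=1_S-f$, $dc=1_S-g$.

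Next I would propagate the first-column intertwiners by setting $a^i_k:=f^i_{k1}a^i_1g^i_{1k}$ and $b^i_k:=g^i_{k1}b^i_1f^i_{1k}$; a short degree count ($\gamma_{ik}^{-1}\gamma_{i1}\cdot 1_\Gamma\cdot\gamma_{i1}^{-1}\gamma_{ik}=1_\Gamma$) gives $a^i_k,b^i_k\in S_{1_\Gamma}$, and the matrix unit relations yield $a^i_kb^i_k=f^i_{kk}$, $b^i_ka^i_k=g^i_{kk}$ and the crucial coherence identity
\[
a^i_kg^i_{kl}=f^i_{k1}a^i_1g^i_{1l}=f^i_{kl}a^i_l.
\]
The candidate for $u$ is then $u:=\sum_{i,k}a^i_k+c$ with putative inverse $v:=\sum_{i,k}b^i_k+d$. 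Orthogonality of the two matrix unit systems with their complementary idempotents, namely $g^i_{kk}(1_S-g)=0$ and $(1_S-g)g^j_{m1}=0$ together with the $f$-analogues, wipes out all cross terms, so $uv=f+(1_S-f)=1_S=vu$ and $u\in S_{1_\Gamma}$ is invertible. Summing the coherence identity gives $u_0\psi(e^i_{kl})=\phi(e^i_{kl})u_0$ where $u_0=\sum a^i_k$, and the containments $\phi(r)=\phi(r)f$, $\psi(r)=g\psi(r)$ force $\phi(r)c=0=c\psi(r)$, whence $u\psi(r)=\phi(r)u$ for all $r\in R$.

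The main obstacle is the \emph{coherent} choice of the intertwiners $a^i_k$: the graded isomorphism $g^i_{11}S\cong_{\gr}f^i_{11}S$ is only determined up to composition by units of $\End_{\gr}(g^i_{11}S)$, and picking $a^i_k$ independently for each pair $(i,k)$ would intertwine only the diagonal idempotents $g^i_{kk}\leftrightarrow f^i_{kk}$, not the off-diagonal matrix units. Anchoring at the first column via $a^i_1$ and transporting by $a^i_k=f^i_{k1}a^i_1g^i_{1k}$ is what guarantees agreement on every $e^i_{kl}$; the role of the complementary intertwiner $c$ (furnished by the second $K_0$-equation applied to $[R]\mapsto[fS]=[gS]$) is then simply to promote the partial isometry $u_0$, which only satisfies $u_0v_0=f$ and $v_0u_0=g$, to a genuine unit of $S$ in degree $1_\Gamma$.
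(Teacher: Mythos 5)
Your construction of the intertwiner is correct and follows exactly the route the paper intends (the paper itself only says that the proof of \cite[Theorem 5.1.3(1)]{Roozbeh_book} carries over once Lemma \ref{iso_carries_from_K0_for_ultramatricial_also} replaces Lemma \ref{iso_carries_from_K0}): the degree bookkeeping showing $a^i_1,b^i_1,c,d\in S_{1_\Gamma}$, the anchoring at the first column and propagation $a^i_k=f^i_{k1}a^i_1g^i_{1k}$, the coherence identity $a^i_kg^i_{kl}=f^i_{kl}a^i_l$, the orthogonality computations giving $uv=vu=1_S$, and the easy direction $(2)\Rightarrow(1)$ are all sound.

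The gap is in the final clause, ``whence $u\psi(r)=\phi(r)u$ for all $r\in R$.'' What you have actually proved is this relation for $r$ in the subring of $R$ generated by the standard graded matrix units, i.e.\ for matrices over the prime subring of $K$. A general homogeneous element of $R$ has the form $\lambda e^i_{kl}$ with $\lambda\in K$ homogeneous, and since $\phi,\psi$ are only assumed to be graded \emph{ring} homomorphisms, $\phi(\lambda e^i_{kl})$ and $\psi(\lambda e^i_{kl})$ are not determined by the images of the matrix units, so the intertwining relation does not propagate to them by additivity and multiplicativity alone. To close this one must either assume that $\phi,\psi$ are graded $K$-algebra homomorphisms with $K$ central (as in \cite[Theorem 5.1.3]{Roozbeh_book} and \cite[Lemma 15.23]{Goodearl_book}, where $K$-linearity plus centrality of $\lambda$ finishes the computation), or choose $a^i_1$ so that it additionally conjugates the two induced graded ring embeddings $K\cong e^i_{11}Re^i_{11}\to S$ onto one another. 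The latter is impossible in general: for trivial $\Gamma$ and $K=R=S=\Cset$, the maps $\phi=\mathrm{id}$ and $\psi=$ complex conjugation induce the same map on $K_0$ but are not conjugate by any unit of the commutative ring $\Cset$. So the missing step is not a formality; for bare graded ring homomorphisms the implication $(1)\Rightarrow(2)$ genuinely requires more than agreement on matrix units, and your argument (like the paper's sketch, which inherits the same issue by dropping the algebra hypothesis) silently uses $K$-linearity at the end.
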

\begin{proof}
If the arguments related to the presence of an involution are not considered, the proof of \cite[Theorem 3.4]{Roozbeh_Lia} can be used for the current proof: the relations on the standard graded matrix units hold without the requirements that $\Gamma$ is abelian and Lemma \ref{iso_carries_from_K0} can be used instead of \cite[Lemma 3.2]{Roozbeh_Lia}. We also emphasize that the maps in \cite[Theorem 3.4]{Roozbeh_Lia} are not assumed to be unit-preserving. 
\end{proof}

Propositions \ref{fullness} and \ref{faithfulness} (fullness and faithfulness) imply the classification result below.

\begin{theorem} (Classification)
If $\Gamma$ is any group, $K$ is a $\Gamma$-graded field, $R$ and $S$ are graded ultramatricial $K$-algebras, and $f: (K^{\Gamma}_0(R), D_R)\to (K^{\Gamma}_0(S), D_S)$ is an isomorphism of the category $\OG^D_\Gamma$, then there is a graded algebra isomorphism $\phi: R\to S$ such that $K^{\Gamma}_0(\phi)=f.$ If $f$ is order-unit-preserving, then $\phi$ can be chosen to be unit-preserving. 
\label{classification}
\end{theorem}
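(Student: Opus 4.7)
The plan is to carry out the classical intertwining argument (as in \cite[Theorem 15.26]{Goodearl_book} and \cite[Theorem 5.2.4]{Roozbeh_book}) in this setting, using Propositions \ref{fullness} and \ref{faithfulness} together with Lemma \ref{iso_carries_from_K0_for_ultramatricial_also} and Proposition \ref{direct_limits_K0}. First I would fix presentations $R \cong_{\gr} \varinjlim R_n$ and $S \cong_{\gr} \varinjlim S_n$, where $(R_n, \alpha_{nm})$ and $(S_n, \beta_{nm})$ are directed sequences of graded matricial rings over $K$, with translational maps $\alpha_n : R_n \to R$ and $\beta_n : S_n \to S$. By Proposition \ref{direct_limits_K0}, $(K_0^{\Gamma}(R), D_R) = \varinjlim(K_0^{\Gamma}(R_n), D_{R_n})$ and similarly for $S$, so it suffices to construct a compatible sequence of graded ring maps below these direct limits.

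The first step is to build an intertwining sequence at the $K_0^{\Gamma}$-level. Each $K_0^{\Gamma}(R_n)$ is a finitely generated $\Zset[\Gamma]$-module (a finite direct sum of copies of $\Zset[\Gamma/\Gamma_K]$ by section \ref{subsection_division_ring}), and $D_{R_n}$ is generated by finitely many elements. Hence the image of $(K_0^{\Gamma}(R_n), D_{R_n})$ under $f \circ K_0^{\Gamma}(\alpha_n)$ lies, modulo eventual factorization through the direct limit $S = \varinjlim S_n$, in some $(K_0^{\Gamma}(S_{m_n}), D_{S_{m_n}})$, and the factorization is itself a morphism of $\OG_\Gamma^D$. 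Doing the same for $f^{-1}$ and passing to subsequences (without loss of generality $n_k = m_k = k$), I would produce morphisms $f_k : (K_0^{\Gamma}(R_k), D_{R_k}) \to (K_0^{\Gamma}(S_k), D_{S_k})$ and $g_k : (K_0^{\Gamma}(S_k), D_{S_k}) \to (K_0^{\Gamma}(R_{k+1}), D_{R_{k+1}})$ of $\OG_\Gamma^D$ such that $g_k f_k = K_0^{\Gamma}(\alpha_{k,k+1})$ and $f_{k+1} g_k = K_0^{\Gamma}(\beta_{k,k+1})$, and such that the translational maps into $\varinjlim K_0^{\Gamma}(R_k) = K_0^{\Gamma}(R)$ and $\varinjlim K_0^{\Gamma}(S_k) = K_0^{\Gamma}(S)$ induce $f$ and $f^{-1}$ respectively.

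The second step is to lift $f_k$ and $g_k$ to actual graded ring homomorphisms $\phi_k : R_k \to S_k$ and $\psi_k : S_k \to R_{k+1}$, and to adjust them so that the zig-zag diagram commutes on the nose (not just after applying $K_0^{\Gamma}$). Proposition \ref{fullness} produces such $\phi_k$ and $\psi_k$. The difficulty is that a priori $\psi_k \phi_k$ and $\alpha_{k,k+1}$ only agree on $K_0^{\Gamma}$, and similarly for the other composition. This is where Proposition \ref{faithfulness} intervenes: since $K_0^{\Gamma}(\psi_k \phi_k) = g_k f_k = K_0^{\Gamma}(\alpha_{k,k+1})$ with $R_k$ graded matricial and $R_{k+1}$ graded matricial, there exists a homogeneous invertible element $u_k \in (R_{k+1})_{1_\Gamma}$ conjugating $\psi_k \phi_k$ to $\alpha_{k,k+1}$, and analogously a $v_k \in (S_{k+1})_{1_\Gamma}$ for the other square. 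I would inductively replace $\psi_k$ by $u_k^{-1}(-)u_k$ (absorbing the conjugation into the next stage) and then $\phi_{k+1}$ similarly by $v_k$, so that the modified maps make both triangles of the intertwining strictly commute. The hard part will be the bookkeeping: each correction on level $k$ alters the composite on level $k+1$, and one must verify that the iterated adjustments do not destroy commutativity of earlier squares; this is essentially the standard argument and goes through because the conjugating elements at each stage live inside $R_{k+1}$ or $S_{k+1}$, not in $R$ or $S$ globally.

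The third step is to pass to the direct limit. Once the diagram strictly commutes, $\phi := \varinjlim \phi_k : R \to S$ is a well-defined graded ring homomorphism with graded inverse $\varinjlim \psi_k$, hence a graded isomorphism. Functoriality of $K_0^{\Gamma}$ together with Proposition \ref{direct_limits_K0} yields $K_0^{\Gamma}(\phi) = \varinjlim f_k = f$ as morphisms of $\OG_\Gamma^D$. For the final sentence, if $f$ is order-unit-preserving then one may additionally arrange that $f_1$ maps $[R_1]$ to $[S_1]$ (after possibly reindexing so that the images $K_0^{\Gamma}(\alpha_1)([R_1])$ and $K_0^{\Gamma}(\beta_1)([S_1])$ agree with $[R]$ and $[S]$ under $f$), and then by the second assertion of Proposition \ref{fullness} each $\phi_k$ can be taken unit-preserving, whence $\phi$ is unit-preserving as well.
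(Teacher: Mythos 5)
Your overall strategy --- the Elliott-style intertwining built from Propositions \ref{fullness} and \ref{faithfulness} --- is exactly the one the paper uses; the only structural difference is that the paper runs the intertwining in the unital case (following the proof of \cite[Theorem 5.2.4]{Roozbeh_book}) and then deduces the $\OG^D_\Gamma$ case by passing to unitizations, extending $f$ to $f^u:K_0^\Gamma(R^u)\to K_0^\Gamma(S^u)$ and restricting the resulting isomorphism $\phi^u$ to $\ker p_R$, whereas you run a non-unital intertwining directly. Your second and third steps (lifting by fullness, correcting by conjugation via faithfulness so that only the most recently constructed arrow is modified, and passing to the limit) are sound.

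The gap is in your first step. You justify factoring $f\circ K_0^\Gamma(\alpha_n)$ through some $(K_0^\Gamma(S_{m_n}),D_{S_{m_n}})$ by the finite generation of $K_0^\Gamma(R_n)$ as a $\Zset[\Gamma]$-module. Finite generation only lets you lift the images of the generators $x_i=\gamma_{i1}[e^i_{11}R_n]$ to some finite stage; to obtain a well-defined $\Zset[\Gamma]$-module map you must also verify the defining relations, namely that the chosen lifts are fixed by $\Gamma_K=\Stab(x_i)$. Since $K_0^\Gamma(S_m)\cong\bigoplus_j\Zset[\Gamma/\Gamma_K]$ and, for non-normal $\Gamma_K$, most elements of $\Zset[\Gamma/\Gamma_K]$ are not $\Gamma_K$-fixed, a lift of a $\Gamma_K$-fixed element of the direct limit need not be $\Gamma_K$-fixed at any finite stage; and when $\Gamma_K$ is not finitely generated you cannot dispose of the infinitely many relations $(\delta-1)y_i=0$, $\delta\in\Gamma_K$, by passing to a single larger $m$. (For $\Gamma$ abelian, or more generally $\Gamma_K$ normal, every element of $\Zset[\Gamma/\Gamma_K]$ is $\Gamma_K$-fixed and your step is harmless, which is why the argument of \cite{Roozbeh_book} does not meet this obstacle; the present theorem, however, is claimed for arbitrary $\Gamma$ and $K$.) The standard repair is to reverse the order of your first two steps: run the recipe of the proof of Proposition \ref{fullness} with target the ultramatricial ring $S$, observe that all of its data (the projective modules $P_i, Q_i$, realized by homogeneous idempotent matrices, together with the finitely many identities among their classes in $K_0^\Gamma(S)$) descends to some finite stage $S_{m_n}$, obtain the graded ring homomorphism $\phi_n:R_n\to S_{m_n}$ directly, and only then set $f_n:=K_0^\Gamma(\phi_n)$; the $K_0^\Gamma$-level factorization is then a consequence rather than an input.
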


\begin{proof}
We recall \cite[Theorem 4.5]{Roozbeh_Lia} stating that if $\Gamma$ is abelian and if $K$ has an involution which satisfies some additional conditions, then $\phi$ can be obtained to be a graded $*$-algebra isomorphism.  
If the arguments related to the presence of an involution are not considered and Propositions \ref{fullness} and \ref{faithfulness} are used instead of \cite[Theorem 2.7 and Theorem 3.4]{Roozbeh_Lia}, the proof of \cite[Theorem 4.5]{Roozbeh_Lia} modifies to the proof of the theorem. 
\end{proof}

One may wonder whether the assumption that $K$ is commutative can be dropped from Theorem \ref{classification} and still have that the homomorphism $\phi$ is both a graded ring and a graded $K$-module homomorphism. The example below illustrates that this cannot be required. 

Let $K$ be a $\Gamma$-graded division ring such that $\Gamma_K=\Gamma.$ For example, let $K=F[\Gamma]$ graded by $K_\gamma=F\{\gamma\}$ for any group $\Gamma$ and any field $F$ trivially graded by $\Gamma.$ Let $\gamma,\delta\in\Gamma,$ $R=\M_1(K)(\gamma)$ and $S=\M_1(K)(\delta).$ By the assumptions that $\Gamma_K=\Gamma$ and that $K$ is a graded division ring, there is an invertible element $a\in K_{\delta\gamma^{-1}}.$ Thus, the map $x\mapsto axa^{-1}$ is a graded ring isomorphism $R\cong_{\gr}S$ and so $(K_0^\Gamma(R), [R])\cong (K_0^\Gamma(S), [S])$ in $\OG^u_\Gamma.$ If we assume that there is $\phi: R\to S$ which is both a graded ring and a graded $K$-module isomorphism, we have that $\phi(x)=x\phi(1)$ for any $x\in K.$ In particular, if $x\in R_\gamma=K_\gamma,$ then $\phi(x)\in S_\gamma=K_{\delta\gamma\delta^{-1}}$ and $x\phi(1)\in K_\gamma.$ By considering $\Gamma$ and $K$ such that $K_{\delta\gamma\delta^{-1}}\neq K_\gamma,$ for example 
$\Gamma=D_3=\langle a,b | a^3=1, b^2=1, ba=a^2b\rangle,$  $K=\Cset[\Gamma],$ $\gamma=a$ and $\delta=b,$ we arrive at a contradiction. 

\section{Dimension \texorpdfstring{$\Gamma$}{TEXT}-group extensions}\label{appendix_extensions}

In the case when $\Gamma$ is trivial, every dimension group with a generating interval $D$ has a dimension group extension with an order-unit by the group $(\Zset, 1).$ In this section, we prove a generalization of this statement for an ultrasimplicial $\Gamma$-group and, consequently, for a dimension $\Gamma$-group in the case when $\Zset[\Gamma]$ is Noetherian. 

If $G, H$ and $K$ are ordered $\Gamma$-groups then $H$ is an {\em ordered $\Gamma$-group extension} of $G$ by $K$ if 
\[\xymatrix{ 0\ar[r] & G\ar[r]^i& H\ar[r]^{p}& K\ar[r] &0}\]
is a short exact sequence in the category $\OG_\Gamma$ (thus $p, i$ are order-preserving $\Gamma$-group homomorphisms such that $i$ is injective, $p$ is surjective, and $i(G)=\ker p$), $i^{-1}(H^+)=G^+,$ and $p(H^+)=K^+.$
 
If $D$ is a generating interval of $G$, $u$ an order-unit of $H,$ and $v$ an order-unit of $K$, then $(H, u)$ is an {\em ordered $\Gamma$-group extension} of $(G,D)$ by $(K,v)$
if $H$ is a $\Gamma$-group extension of $G$ by $K$, $p$ is a morphism of $\OG_\Gamma^u,$ and $i^{-1}([0, u])=D$ (which also implies that $i$ is a morphism of $\OG_\Gamma^D$). This is written as follows. 
\[\xymatrix{ 0\ar[r] & (G,D)\ar[r]^{i} & (H,u)\ar[r]^{p}& (K,v)\ar[r] &0}\]

\begin{proposition}
Let $\Gamma$ be a group with a subgroup $\Delta.$ If $G$ is an ordered and directed $\Gamma$-group with a generating interval $D$, then $(G,D)$ has an ordered $\Gamma$-group extension by $(\Zset[\Gamma/\Delta], \Delta).$
\label{extension_of_any} 
\end{proposition}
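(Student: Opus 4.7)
The plan is to take the split extension $H = G\oplus\Zset[\Gamma/\Delta]$ as a $\Zset[\Gamma]$-module, with $i(g) = (g,0)$, $p(g,x) = x$, and $u = (0,\Delta)$; all the substantive content lies in choosing the positive cone on $H$. Guided by Proposition \ref{description_of_kernel} (which describes such a cone in the presence of interpolation), I would define
\[
H^+ := \bigl\{(g, \pi(a)) \in H \,:\, a \in \Zset^+[\Gamma],\ \exists\, d\in D \text{ with } g + ad \geq 0\bigr\},
\]
where $\pi:\Zset[\Gamma]\to\Zset[\Gamma/\Delta]$ is the natural map.

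Verifying the cone axioms is routine. Containment of $0$ takes $a=0$ and any $d\in D$. Additive closure combines two witnesses $(a_1,d_1),(a_2,d_2)$ using a common upper bound $d\in D$ with $d\geq d_1,d_2$ provided by directedness of $D$. $\Gamma$-invariance follows by applying $\gamma\in\Gamma$ to the witness inequality, noting $\gamma a\in\Zset^+[\Gamma]$ and $\pi(\gamma a) = \gamma\pi(a)$. Strictness reduces to that of $G^+$, because $a\in\Zset^+[\Gamma]$ together with $\pi(a)=0$ forces $a=0$; the same fact yields $i^{-1}(H^+) = G^+$. The equality $p(H^+) = \Zset^+[\Gamma/\Delta]$ is clear upon taking $g=0$, any nonnegative lift of a given $x\geq 0$, and any $d\in D$, using $ad\geq 0$. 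For the order-unit property of $u$, given $(g,x)\in H$ I would write $g = g^+ - g^-$ (using $G$ directed), $x=\pi(b^+ - b^-)$ with $b^\pm\in\Zset^+[\Gamma]$, and use that $D$ generates $G^+$ to express $g^+ = \sum_k a_k d_k$ with $a_k\in\Zset^+[\Gamma]$, $d_k\in D$. Directedness of $D$ yields $d\in D$ dominating all $d_k$, so $g^+\leq ed$ for $e:=\sum_k a_k$. Taking $c = b^+ + e$ and $a = b^- + e$, both in $\Zset^+[\Gamma]$, gives $cu - (g,x) = (-g,\pi(a))\in H^+$ with witness $d$, since $-g + ad \geq -g^+ + ed \geq 0$.

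The delicate step, which I expect to be the main obstacle, is the identification $i^{-1}([0,u]) = D$. Unwinding, $(g,0)\in[0,u]$ amounts to $g\geq 0$ together with the existence of $a\in\Zset^+[\Gamma]$ with $\pi(a) = \Delta$ and $d\in D$ satisfying $g\leq ad$. A direct coefficient calculation forces $a = \delta$ for some $\delta\in\Delta$, so the condition becomes $0\leq g\leq\delta d$. The inclusion $D\subseteq i^{-1}([0,u])$ is immediate by taking $\delta = 1_\Gamma$ and $d = g$. The reverse inclusion requires convexity of $D$, the presence of $0\in D$ (which we may arrange by replacing $D$ by the convex directed closure of $D\cup\{0\}$ inside $G^+$, still a generating interval), and crucially $\delta d\in D$. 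This last $\Delta$-invariance of $D$ is the genuine obstacle: it holds trivially whenever $\Delta\subseteq\Stab(G)$, as in the dimension $\Gamma$-group applications of this proposition, since $\delta$ then acts as the identity on $G$; in full generality one must either assume the invariance or enlarge $D$ to its smallest $\Delta$-invariant convex directed enlargement in $G^+$ and verify that this enlargement retains the generating interval property.
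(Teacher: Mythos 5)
Your construction is the paper's own: the same split module $H=G\oplus\Zset[\Gamma/\Delta]$ with $u=(0,\Delta)$, the same cone $H^+$, and the same sequence of verifications (cone axioms, strictness, $u$ an order-unit, $i^{-1}(H^+)=G^+$, $p(H^+)=\Zset^+[\Gamma/\Delta]$), all of which you carry out correctly. Where you diverge is that you stop and flag the computation of $i^{-1}([0,u])$ as an obstacle instead of closing it --- and you are right to do so, because this is exactly the point the paper's proof elides. From $(-x,\Delta)\in H^+$ the definition of $H^+$ yields only that $-x+\delta d\geq 0$ for a single $\delta\in\Delta$ and some $d\in D$ (a nonnegative $a$ with $\pi(a)=\Delta$ must be one element of $\Delta$), whereas the paper asserts ``$-x+d\in G^+$ for some $d\in D$'' and then applies convexity of $D$ to the bounds $0$ and $d$. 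Without knowing that $\delta d$ (or some element of $D$ dominating $x$) lies in $D$, convexity does not apply. The issue is not hypothetical: take $\Gamma=\Delta=\langle t\rangle\cong\Zset$, $G=\Zset[\Gamma]$ with the coordinatewise order, and $D=[0,1_G]=\{0,1_G\}$, a generating interval by Proposition \ref{generating_interval_of_simplicial}; then $0\leq (t,0)\leq (0,\Delta)$ in $H$ (witness $a=t$, $d=1_G$), so $t\in i^{-1}([0,u])\setminus D$. The construction as written therefore does not prove the proposition in the stated generality.

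Your diagnosis of when it does close is also correct: if $\Delta\subseteq\Stab(G)$ (or, more generally, if $D$ is $\Delta$-invariant), then $\delta d=d\in D$ and the convexity argument goes through, provided also $0\in D$ --- another small point you rightly patch and the paper leaves implicit, since its notion of convexity requires both bounds to lie in the set. Every downstream use of this proposition (Propositions \ref{extension_of_simplicial} and \ref{extension_of_ultrasimplicial}, Theorem \ref{extension_of_dimension}) supplies $\Delta\subseteq\Stab(G)$ and a $D$ containing $0$, so your version with the extra hypothesis suffices for everything the paper needs. In short, you have reproduced the intended proof and, in addition, correctly located a gap that the paper's own argument shares; what remains open is only whether the statement holds for arbitrary $\Delta$ and $D$, which neither your proposal nor the paper settles.
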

\begin{proof}
Let $H=G\oplus \Zset[\Gamma/\Delta],$ let $i: G\to H$ be the natural injection and let $p:H\to  \Zset[\Gamma/\Delta]$ be the natural projection so that the diagram
$\xymatrix{ 0\ar[r] & G\ar[r]^i & H \ar[r]^{p}  & \Zset[\Gamma/\Delta]\ar[r] &0}$
is a short exact sequence in the category of $\Zset[\Gamma]$-modules. Let us denote the set $$\{(x, a\Delta)\in H\; |\; a\in\Zset^+[\Gamma]\mbox{ and }x+ad\in G^+\mbox{ for some }d\in D\}$$ by $H^+.$ It is direct to check that $H^+$ is closed under the action of $\Gamma$ and that $(0,0)\in H^+.$ Let $(x,a\Delta),(y,b\Delta)\in H^+$ and $d_1, d_2\in D$ be such that $x+ad_1, y+bd_2\in G^+.$ Since $D$ is upwards directed, there is $d\in D$ such that $d\geq d_1, d\geq d_2$ so that $x+ad\geq x+ad_1\geq 0$ and $y+bd\geq y+bd_2\geq 0.$ Thus $x+y+(a+b)d\geq 0$ and so $H^+$ is additively closed. Hence, $H^+$ is a cone in $H.$ The cone $H^+$ is strict since the cone $G^+$ is strict. So, $H^+$ defines a partial order on $H.$ 
 
Next, we show that $(0, \Delta)$ is an order-unit of $H$. For any $(x, a\Delta)\in H,$ there is $y\in G^+$ with $x\leq y$ since $G$ is directed. Then $y=\sum_{i=1}^n b_id_i$ for some $n,$ $d_i\in D$ and $b_i\in \Zset^+[\Gamma]$ for $i=1,\ldots, n.$ Since  $D$ is upwards directed, one can find $d\in D$ such that $d_i\leq d$ for all $i=1,\ldots, n.$ Thus $x
\leq y\leq \sum_{i=1}^n b_id=bd$ for $b=\sum_{i=1}^n b_i.$ Let $c\in \Zset^+[\Gamma]$ be such that $c\geq a+b.$ This implies that $c-a\geq b$ so that $(c-a)d\geq bd$.
The relation $x\leq bd$ implies that $-x+bd\geq 0$ and so $-x+(c-a)d\geq -x+bd\geq 0.$ Thus, we have that $(-x, (c-a)\Delta)\in H^+$ and, hence, $(x, a\Delta)\leq (0, c\Delta).$ This demonstrates that $(0,\Delta)$ is an order-unit and also implies that $H$ is directed. 

By the definition of $H^+,$ we have that $i$ and $p$ are order-preserving. Since $(0, a\Delta)$ is in $H^+$ for every $a\in \Zset^+[\Gamma],$ we have that  $p(H^+)=\Zset^+[\Gamma/\Delta].$ The relation $i^{-1}(H^+)=G^+$ holds since $i(x)\in H^+$ for $x\in G$ if and only if $x$ is in $G^+.$ The relation $p(0, \Delta)=\Delta$ holds by the definition of $p$. So, it remains to show that $i^{-1}([(0,0), (0, \Delta)])=D.$ For $d\in D,$ $(-d,\Delta)\in H^+$ by the definition of $H^+.$ Hence $(0,0)\leq (d, 0)\leq (0, \Delta)$ and so $i(d)\in [(0,0), (0, \Delta)].$ Conversely, if $i(x)\in [(0,0), (0, \Delta)]$ for some $x\in G,$ then 
$(0,0)\leq (x, 0)\leq (0, \Delta)$ which implies that $(x,0), (-x,\Delta)
\in H^+.$ Hence $x\in G^+$ and $-x+d\in G^+$ for some $d\in D$ by the definition of $H^+.$ Thus $0\leq x\leq d$ which implies that $x\in D$ by the convexity of $D$.
\end{proof}

\begin{proposition}
Let $\Delta$ be a normal subgroup of $\Gamma.$ If $G$ is a simplicial $\Gamma$-group with a simplicial $\Gamma$-basis stabilized by $\Delta$ and a generating interval $D$, then $(G,D)$ has an ordered $\Gamma$-group extension $(H,u)$ by $(\Zset[\Gamma/\Delta], \Delta)$ such that $H$ satisfies (SDP$_\Delta$) and that $\Delta\subseteq \Stab(H).$ 
\label{extension_of_simplicial}
\end{proposition}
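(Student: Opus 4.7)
The plan is to take $(H, u)$ to be precisely the ordered $\Gamma$-group extension constructed in Proposition \ref{extension_of_any}: namely, $H = G \oplus \Zset[\Gamma/\Delta]$ with cone $H^+ = \{(x, a\Delta) \mid a \in \Zset^+[\Gamma],\; x+ad \in G^+ \text{ for some } d \in D\}$ and order-unit $u = (0,\Delta)$, together with injection $i:G\to H$ and projection $p:H\to\Zset[\Gamma/\Delta]$. What remains is to verify the two additional properties $\Delta \subseteq \Stab(H)$ and (SDP$_\Delta$).

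For the stabilizer statement I would first invoke part (1) of Proposition \ref{Delta_normal_case}: since $\Delta$ is normal and stabilizes the simplicial $\Gamma$-basis of $G$, $\Stab(G) = \Delta$, so $\Delta$ fixes every element of $G$. For the second summand, normality gives $\delta\gamma\Delta = \gamma(\gamma^{-1}\delta\gamma)\Delta = \gamma\Delta$ for every $\delta\in\Delta$ and $\gamma\in\Gamma$, so $\Delta$ fixes all cosets, hence all of $\Zset[\Gamma/\Delta]$. Combining these, $\Delta \subseteq \Stab(H)$.

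The key reduction for (SDP$_\Delta$) is the decomposition $z = i(e) + c\cdot w$, where $w := u - i(d) = (-d,\Delta)$ for a suitable $d\in D$, $e\in G^+$, and $c\in\Zset^+[\Gamma]$. Observe first that $w\in H^+$ because $-d + 1\cdot d = 0 \in G^+$. Given a finite family $z_1,\ldots,z_n\in H^+$ with $z_i = (x_i, c_i\Delta)$, choose $d_i\in D$ witnessing $x_i + c_i d_i \in G^+$; since $D$ is directed, pick $d \in D$ dominating all $d_i$, and set $e_i = x_i + c_i d \in G^+$. A direct computation gives $i(e_i) + c_i w = (e_i - c_i d, c_i\Delta) = (x_i, c_i\Delta) = z_i$. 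Now suppose $\sum_{i=1}^n a_i z_i = 0$ with $a_i\in\Zset[\Gamma]$; applying $p$ yields $\pi(\sum_i a_i c_i)=0$, and by Lemma \ref{stabilizer_lemma} (legitimately applied now that $\Delta \subseteq \Stab(H)$) this forces $(\sum_i a_i c_i)\,w = 0$. Therefore $i(\sum_i a_i e_i) = 0$, so by injectivity of $i$, $\sum_i a_i e_i = 0$ in $G$ with each $e_i\in G^+$. Applying Lemma \ref{lemma_simplicial_SDP} to the simplicial $\Gamma$-group $G$ produces $b_{ij}\in\Zset^+[\Gamma]$ and $v_j\in G^+$ with $e_i = \sum_j b_{ij} v_j$ and $\sum_i \pi(a_i b_{ij}) = 0$ for every $j=1,\ldots,m$. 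Setting $y_j = i(v_j)$ for $j\le m$, $y_{m+1} = w$, and $b_{i,m+1} = c_i$ then gives $z_i = \sum_j b_{ij} y_j$ with the $b_{ij}\in\Zset^+[\Gamma]$, $y_j\in H^+$, and $\sum_i \pi(a_i b_{ij}) = 0$ for every $j$, which is precisely (SDP$_\Delta$) for $H$.

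The main obstacle, as I see it, is not in the formal manipulations but in ensuring that the decomposition $z_i = i(e_i) + c_i w$ is simultaneously valid for all indices $i$; this is where both the directedness of $D$ (to extract a common dominating $d$) and the normality of $\Delta$ (so that Lemma \ref{stabilizer_lemma} applies through $\Delta \subseteq \Stab(H)$) are critically used. Once these are in place, (SDP$_\Delta$) on $H$ reduces cleanly to (SDP$_\Delta$) on $G$, which is furnished by Lemma \ref{lemma_simplicial_SDP}.
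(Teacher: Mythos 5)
Your proof is correct, and it follows the same skeleton as the paper's: the same $H=G\oplus\Zset[\Gamma/\Delta]$ with the cone from Proposition \ref{extension_of_any}, the same choice of a common dominating $d\in D$, and the same auxiliary positive element $(-d,\Delta)$ adjoined to (images of) positive elements of $G$. Where you genuinely diverge is in how the crucial identity $\sum_i\pi(a_ib_{ij})=0$ is obtained. The paper expands both $x_i+b_id$ \emph{and} $d$ itself in the simplicial $\Gamma$-basis of $G$, derives $\sum_i(\pi(a_ib_{ij})-\pi(a_ib_ic_j))=0$, and then kills the second term using the fact that normality of $\Delta$ makes $\pi$ a right $\Zset[\Gamma]$-module map applied to $\sum_i\pi(a_ib_i)=0$. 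You instead prove directly that $\sum_ia_ie_i=0$ in $G$ (via the decomposition $z_i=i(e_i)+c_iw$ together with Lemma \ref{stabilizer_lemma} applied to $H$, which is legitimate once $\Delta\subseteq\Stab(H)$ is in hand), and then invoke (SDP$_\Delta$) of $G$ from Lemma \ref{lemma_simplicial_SDP} as a black box. This is a cleaner modularization: normality enters only through the stabilizer statement rather than also through the right-module property of $\pi$, you never need to expand $d$ in the basis, and you use a single copy of $w$ where the paper uses $n$ copies $Y_{m+1},\dots,Y_{m+n}$ of $(-d,\Delta)$ with Kronecker-delta coefficients (functionally equivalent). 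The one point worth making explicit in a write-up is that $\Stab$ of $H$ is computed from the underlying $\Gamma$-module structure of $H$, not from the (non-simplicial) extension order, so citing Proposition \ref{Delta_normal_case} for $\Stab(G)=\Delta$ plus the coset computation for the second summand, as you do, is exactly the right justification.
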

\begin{proof}
Let $H=G\oplus \Zset[\Gamma/\Delta]$ and $u=(0,\Delta).$ Then $(H, u)$ is an ordered $\Gamma$-group extension of $(G, D)$ by $(\Zset[\Gamma/\Delta], \Delta)$ by definition and by Proposition \ref{extension_of_any}. By Proposition \ref{Delta_normal_case}, $\Stab(H)=\Delta.$ So, it is sufficient to show that $H$ satisfies (SDP$_{\Delta}$). Let $X_i=(x_i, b_i\Delta)$ be in $H^+$ for $i=1,\ldots, n$ and assume that $\sum_{i=1}^n a_iX_i=(0,0)$ for some $a_i\in\Zset[\Gamma].$ This implies that $\sum_{i=1}^n a_ix_i=0,$ that the elements $b_i$ can be chosen in $\Zset^+[\Gamma],$ that $\sum_{i=1}^n \pi(a_ib_i)=0$ where $\pi$ is the natural map $\pi:\Zset[\Gamma]\to \Zset[\Gamma/\Delta],$ 
and that there are $d_i\in D$ such that $x_i+b_id_i\geq 0.$ Since $D$ is upwards directed, there is $d\in D$ such that $d_i\leq d$ for all $i=1,\ldots, n$ and, hence, $x_i+b_id\geq x_i+b_id_i\geq 0.$  

Let $\{y_1,\ldots, y_m\}\subseteq G^+$ be a simplicial $\Gamma$-basis stabilized by $\Delta.$ Thus, for $x_i+b_id, d\in G^+,$ there are $b_{ij}, c_j\in\Zset^+[\Gamma]$ for $i=1,\ldots n, j=1,\ldots, m$ such that $x_i+b_id=\sum_{j=1}^m b_{ij}y_j$ and $d=\sum_{j=1}^m c_jy_j$ for all $i=1,\ldots, n.$  The condition  $$0=\sum_{i=1}^n a_ix_i=\sum_{i=1}^n a_i(x_i+b_id)-\sum_{i=1}^n a_ib_id=\sum_{i=1}^n\sum_{j=1}^m a_ib_{ij}y_j-\sum_{i=1}^n\sum_{j=1}^m a_ib_ic_jy_j$$ implies that 
$\sum_{i=1}^n \left(\pi(a_ib_{ij})-\pi(a_ib_ic_j)\right)=0$ for all $j=1,\ldots, m.$ 

Since $\Delta$ is normal in $\Gamma$, the map $\pi$ is both a left and a right $\Zset[\Gamma]$-module homomorphism. So, $\sum_{i=1}^n \pi(a_ib_i)=0$ implies that $\sum_{i=1}^n \pi(a_ib_ic_j)=0$ for all $j=1,\ldots, m,$ and hence  
\[0=\sum_{i=1}^n \pi(a_ib_{ij})-\sum_{i=1}^n\pi(a_ib_ic_j)=\sum_{i=1}^n \pi(a_ib_{ij})-0=\sum_{i=1}^n  \pi(a_ib_{ij}).\]

Let $Y_j=(y_j, 0)$ for $j=1,\ldots, m$ and let $Y_{m+1}=(-d, \Delta).$ Then $Y_j$ and $Y_{m+1}$ are in $H^+$ by Proposition \ref{extension_of_any}. For all $i=1,\ldots n, j=1,\ldots, m,$ let $B_{ij}=b_{ij}\in \Zset^+[\Gamma]$ and $B_{i(m+1)}=b_i\in\Zset^+[\Gamma].$ For any $i=1,\ldots, n,$
\[X_i=(x_i, b_i\Delta)=(\sum_{j=1}^mb_{ij}y_j-b_id, b_i\Delta)=(\sum_{j=1}^mb_{ij}y_j,0)+b_i(-d, \Delta)=\]
\[\sum_{j=1}^m b_{ij}(y_j,0)+b_i(-d, \Delta)=\sum_{j=1}^m B_{ij}Y_j+B_{i(m+1)}Y_{m+1}=\sum_{j=1}^{m+1} B_{ij}Y_j.\]
For any $j=1,\ldots, m,$ \[\sum_{i=1}^n\pi(a_iB_{ij})=\sum_{i=1}^n\pi(a_ib_{ij})=0\;\;\mbox{  and }\;\;\sum_{i=1}^n \pi(a_iB_{i(m+1)})=\sum_{i=1}^n \pi(a_ib_i)=0.\]
\end{proof}

Proposition \ref{extension_of_simplicial} implies the following proposition. 

\begin{proposition}
Let $\Delta$ be a normal subgroup of $\Gamma.$ If $(G, D)$ is a direct limit of a directed system of simplicial $\Gamma$-groups with simplicial $\Gamma$-bases stabilized by $\Delta$ in the category $\OG^D_\Gamma,$ then $(G,D)$ has an ordered $\Gamma$-group extension $(H,u)$ by $(\Zset[\Gamma/\Delta], \Delta)$ such that $H$ satisfies (SDP$_\Delta$) and such that $\Delta\subseteq \Stab(H).$ 
\label{extension_of_ultrasimplicial}
\end{proposition}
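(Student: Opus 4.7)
My plan is to apply Proposition~\ref{extension_of_simplicial} termwise to a directed system representing $(G,D)$ and then pass to the direct limit in the category $\OG_\Gamma^u$, using that (SDP$_\Delta$), the property $\Delta\subseteq\Stab(-)$, and the cone conditions defining an extension are all preserved by direct limits.

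Write $(G,D)$ as the direct limit in $\OG_\Gamma^D$ of a directed system $((G_i,D_i),g_{ij})$ with each $G_i$ simplicial with simplicial $\Gamma$-basis stabilized by $\Delta$. By Proposition~\ref{extension_of_simplicial}, each $(G_i,D_i)$ admits an extension $(H_i,u_i)$ by $(\Zset[\Gamma/\Delta],\Delta)$ with $H_i=G_i\oplus\Zset[\Gamma/\Delta]$, $u_i=(0,\Delta)$, $H_i$ satisfying (SDP$_\Delta$), and $\Delta\subseteq\Stab(H_i)$. For $i\leq j$, I would define $h_{ij}\colon H_i\to H_j$ by $h_{ij}(x,a\Delta)=(g_{ij}(x),a\Delta)$. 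The condition $g_{ij}(D_i)\subseteq D_j$ makes $h_{ij}$ a morphism of $\OG_\Gamma^u$: it sends $u_i$ to $u_j$ and, for $(x,a\Delta)\in H_i^+$ with witness $d\in D_i$, the element $g_{ij}(d)\in D_j$ witnesses $h_{ij}(x,a\Delta)\in H_j^+$. Functoriality $h_{jk}h_{ij}=h_{ik}$ is automatic. Let $(H,u)$ be the direct limit of $((H_i,u_i),h_{ij})$ in $\OG_\Gamma^u$, existing by Proposition~\ref{direct_limits}, with translational maps $f_i\colon H_i\to H$.

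Next I would identify $(H,u)$ as the desired extension. Exactness of direct limits in the category of $\Zset[\Gamma]$-modules, applied to the constant system $\Zset[\Gamma/\Delta]$ and to the sequences $0\to G_i\to H_i\to\Zset[\Gamma/\Delta]\to 0$, yields a short exact sequence $0\to G\stackrel{\iota}{\to}H\stackrel{p}{\to}\Zset[\Gamma/\Delta]\to 0$ of $\Zset[\Gamma]$-modules. The cone $H^+=\bigcup_i f_i(H_i^+)$ coincides with $\{(y,a\Delta)\,|\,a\in\Zset^+[\Gamma],\ y+ad\geq 0\text{ for some }d\in D\}$: one inclusion is transparent from the definition of $H_i^+$ together with $g_i(D_i)\subseteq D$; for the other, use $D=\bigcup_i g_i(D_i)$ and the fact that an inequality in $G$ that holds in the direct limit must already hold in some $G_j$, so a witness for membership in $H^+$ can be transported back to a single $H_j$. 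Given this description of $H^+$, the relations $\iota^{-1}(H^+)=G^+$, $p(H^+)=\Zset^+[\Gamma/\Delta]$, $p(u)=\Delta$, and $\iota^{-1}([0,u])=D$ (the last using convexity of $D$) reduce to the same short verifications as in the proof of Proposition~\ref{extension_of_any}.

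Finally, (SDP$_\Delta$) transfers from each $H_i$ to $H$ by a routine direct limit argument (as alluded to in the second sentence of Lemma~\ref{lemma_simplicial_SDP}): any relation $\sum a_ix_i=0$ in $H$ can be lifted, after replacing indices with a common larger one, to a relation in some $H_m$, where (SDP$_\Delta$) applies, and the resulting witnesses push forward along $f_m$ to $H$. The inclusion $\Delta\subseteq\Stab(H)$ is immediate since each $\delta\in\Delta$ fixes every element of every $H_i$ and hence fixes every $f_i(y)\in H$. The main technical point will be the identification of the cone $H^+$ with the cone from Proposition~\ref{extension_of_any}, which hinges on the description $D=\bigcup_i g_i(D_i)$ provided by direct limits in $\OG_\Gamma^D$.
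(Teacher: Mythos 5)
Your proposal is correct and follows essentially the same route as the paper's proof: apply Proposition \ref{extension_of_simplicial} termwise, connect the extensions by $g_{ij}\oplus 1_{\Zset[\Gamma/\Delta]}$, pass to the direct limit in $\OG^u_\Gamma$, and verify the extension conditions (in particular $\iota^{-1}([0,u])=D$) using that inequalities in the limit already hold at a finite stage and that $D=\bigcup_i g_i(D_i)$. Your extra step of identifying $H^+$ with the explicit cone of Proposition \ref{extension_of_any} is a harmless repackaging of the same verifications the paper carries out directly on the commutative diagram of short exact sequences.
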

\begin{proof}
Let $I$ be a directed set, let $(G, D)$ be a direct limit of simplicial $\Gamma$-groups $((G_i, D_i), g_{ij})$, $i,j\in I, i\leq j,$ with simplicial $\Gamma$-bases stabilized by $\Delta,$ and let $g_i, i\in I$ be the translational maps. By Proposition \ref{extension_of_simplicial}, we can find ordered $\Gamma$-group extensions $(H_i, u_i), i\in I,$ of $(G_i, D_i)$ by $(\Zset[\Gamma/\Delta], \Delta)$ which satisfy (SDP$_\Delta$) and such that $\Delta\subseteq \Stab(H_i).$ Let $\iota_i$ denote the inclusion of $(G_i,D_i)$ into $(H_i, u_i)$ and let $p_i$ denote the projection $(H_i, u_i)\to (\Zset[\Gamma/\Delta], \Delta).$ If $h_{ij}=g_{ij}\oplus 1_{\Zset[\Gamma/\Delta]}$ where the second term is the identity map on $\Zset[\Gamma/\Delta],$ then the system $((H_i,u_i), h_{ij}),$  $i,j\in I, i\leq j,$ is a directed system in $\OG^u_\Gamma.$ Note that $h_{ij}\iota_i=\iota_jg_{ij}$ and $p_jh_{ij}=p_i$  for all $i\leq j$ by the proof of Proposition \ref{extension_of_simplicial}. Let $(H, u)$ be a direct limit of this directed system which exists by Proposition \ref{direct_limits} and let $h_i=g_i\oplus 1_{\Zset[\Gamma/\Delta]}$. Since all $\Gamma$-groups $H_i$ satisfy (SDP$_{\Delta}$) and are stabilized  by $\Delta$, $H$ satisfies (SDP$_{\Delta}$) and it is stabilized by $\Delta.$ 

Define the maps $\iota: G\to H$ and $p: H\to \Zset[\Gamma/\Delta]$ by $\iota(g_i(x_i))=h_i\iota_i(x_i)$ and $p(h_i(y_i))=p_i(y_i).$ We claim that we obtain the required properties from the commutative diagram below. 
\[\xymatrix{ 0\ar[r] & (G_i,D_i)\ar[r]^{\iota_i}\ar[d]^{g_i} & (H_i,u_i)\ar[r]^{p_i}\ar[d]^{h_i}& (\Zset[\Gamma/\Delta], \Delta)\ar[r]\ar[d]^{=} &0\\
 0\ar[r] & (G,D)\ar[r]^{\iota} & (H,u)\ar[r]^{p}& (\Zset[\Gamma/\Delta], \Delta)\ar[r] &0}
\]
Indeed, one checks that the maps $\iota$ and $p$ are well-defined, that $\iota$ is injective and that $p$ is surjective using properties of a direct limit.  
Then one checks that $\ker p$ is equal to the image of $\iota,$ that $\iota$ and $p$ are order-preserving, that $\iota^{-1}(H^+)=G^+,$ that $p(u)=\Delta,$ and that $p(H^+)=\Zset^+[\Gamma/\Delta]$ using the definitions of the maps. It remains to check that $\iota^{-1}([0, u])=D.$ If $d\in D=\bigcup_{i\in I}g_i(D_i),$ then there are $i\in I$ and $d_i\in D_i=\iota_i^{-1}([0, u_i])$ such that $d=g_i(d_i).$ Hence the relation $0\leq \iota_i(d_i)\leq u_i$ holds in $H_i.$ Applying $h_i$ to this relation, we obtain that $0\leq h_i\iota_i(d_i)=\iota g_i(d_i)\leq h_i(u_i)=u.$ Hence $d=g_i(d_i)\in \iota^{-1}([0, u]).$ Conversely, if $x\in \iota^{-1}([0, u]),$ let $\iota(x)=h_i(y_i)$ for some $y_i\in H_i.$ Since $0=p\iota(x)=ph_i(y_i)=p_i(y_i),$ $y_i=\iota_i(x_i)$ for some $x_i\in G_i$ and then $\iota(x)=h_i(y_i)=h_i\iota_i(x_i)=\iota g_i(x_i)$ which implies $x=g_i(x_i)$ since $\iota$ is injective. 
The relation $0\leq \iota(x)=h_i\iota_i(x_i)\leq u=h_i(u_i)$ implies that there is $j\geq i$ such that  $0\leq h_{ij}\iota_i(x_i)=\iota_j g_{ij}(x_i)\leq h_{ij}(u_i)=u_j$ and so $g_{ij}(x_i)\in \iota^{-1}_j([0, u_j])=D_j.$ Thus $x=g_i(x_i)=g_jg_{ij}(x_i)\in g_j(D_j)\subseteq \bigcup_{i\in I} g_i(D_i)=D.$
\end{proof}

We show the main result of this appendix now. 

\begin{theorem}
If $\Gamma$ is such that $\Zset[\Gamma]$ is Noetherian and $(G, D)$ is a dimension $\Gamma$-group satisfying (SDP$_\Delta$) for some subgroup $\Delta$ of $\Gamma$ such that $\Delta\subseteq \Stab(G),$
then $(G,D)$ has an ordered $\Gamma$-group extension $(H,u)$ by $(\Zset[\Gamma/\ol\Delta], \ol\Delta),$ where $\ol\Delta$ is the normal closure of $\Delta$ in $\Gamma,$ such that $H$ satisfies (SDP$_{\ol\Delta}$) and such that $\ol\Delta\subseteq \Stab(H).$  
\label{extension_of_dimension}
\end{theorem}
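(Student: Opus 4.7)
The plan is to reduce this theorem directly to Proposition \ref{extension_of_ultrasimplicial} via Theorem \ref{any_dim_gr}. First, I would observe that the hypotheses of Theorem \ref{any_dim_gr}(3) are essentially already in place: since $\Stab(G)$ is a normal subgroup of $\Gamma$ containing $\Delta$, it also contains the normal closure $\overline{\Delta}$, and since (SDP$_\Delta$) implies (SDP$_{\Delta'}$) for any subgroup $\Delta'\supseteq\Delta$ (as noted in the paragraph following Definition \ref{SDP_Delta_definition}), $G$ satisfies (SDP$_{\overline{\Delta}}$). Thus $(G,D)$ is a dimension $\Gamma$-group satisfying the hypotheses of Theorem \ref{any_dim_gr}(3) with respect to the \emph{normal} subgroup $\overline{\Delta}$.

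Next, I would apply Theorem \ref{any_dim_gr}(3), which provides a directed system of simplicial $\Gamma$-groups $(G_i, D_i)_{i\in I}$ whose simplicial $\Gamma$-bases are stabilized by $\overline{\Delta}$, together with an isomorphism in $\OG_\Gamma^D$ between $(G,D)$ and the direct limit of this system. Using this isomorphism we may identify $(G,D)$ with this direct limit.

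Finally, I would invoke Proposition \ref{extension_of_ultrasimplicial}, applied with $\overline{\Delta}$ in place of $\Delta$. That proposition is precisely designed for direct limits in $\OG_\Gamma^D$ of simplicial $\Gamma$-groups whose bases are stabilized by a normal subgroup, and its conclusion supplies an ordered $\Gamma$-group extension $(H,u)$ of $(G,D)$ by $(\Zset[\Gamma/\overline{\Delta}],\overline{\Delta})$ which satisfies (SDP$_{\overline{\Delta}}$) and has $\overline{\Delta}\subseteq\Stab(H)$. This is exactly what the theorem asserts.

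There is no real obstacle here: the work has already been distributed among Theorem \ref{any_dim_gr} (which bears the burden of the noetherian hypothesis via Proposition \ref{telescoping}) and Proposition \ref{extension_of_ultrasimplicial} (which carries out the extension construction level-by-level and passes to the direct limit). The only minor point requiring care is making sure that the subgroup used throughout the chain of reductions is $\overline{\Delta}$ rather than $\Delta$, and that it is normal so that Proposition \ref{extension_of_ultrasimplicial} applies; both are immediate from the remarks above.
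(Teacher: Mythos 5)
Your proposal is correct and is essentially identical to the paper's own proof: both reduce the statement to Theorem \ref{any_dim_gr} (to realize $(G,D)$ as a direct limit of simplicial $\Gamma$-groups with bases stabilized by the normal closure $\ol\Delta$) and then apply Proposition \ref{extension_of_ultrasimplicial}. Your explicit check that (SDP$_\Delta$) implies (SDP$_{\ol\Delta}$) and that $\ol\Delta\subseteq\Stab(G)$ is exactly the justification the paper builds into the statement of Theorem \ref{any_dim_gr}.
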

\begin{proof}
By Theorem \ref{any_dim_gr}, we can find a directed system of simplicial $\Gamma$-groups with simplicial $\Gamma$-bases stabilized by $\ol\Delta$ such that $(G,D)$ is a direct limit of this directed system in $\OG^D_\Gamma.$ By Proposition \ref{extension_of_ultrasimplicial}, an ordered $\Gamma$-group extension  $(H,u)$ by $(\Zset[\Gamma/\ol\Delta], \ol\Delta)$ exists and it satisfies the required properties.  
\end{proof}

We ask whether the assumption that $\Zset[\Gamma]$ is Noetherian can be dropped from Theorem \ref{extension_of_dimension}.

\end{document}